\newtheorem{thm}{Theorem}
\newtheorem{prop}{Proposition}
\newtheorem{lem}{Lemma}
\newtheorem{rem}{Remark}
\newtheorem{cor}{Corollary}
\newtheorem{exa}{Example}
\newtheorem{defn}{Definition}
\def\Z{\mathbb Z}
\def\C{\mathbb C}
\def\P{\mathbb P^1 (k)}
\def\Z{{\mathbb Z}}
\def\C{{\mathbb C}}
\def\L{\mathcal L}
\def\M{\mathcal M}
\def\X{\mathcal X}
\def\p{\mathfrak p}
\def\p{\mathfrak p}
\def\s{\mathfrak s}
\def\a{\alpha}
\def\t{\tau}
\def\<{\langle}
\def\>{\rangle}
\def\Z{\mathbb Z}
\def\C{\mathbb C}
\def\C{\mathcal C}
\def\M{\mathcal M}
\def\a{\alpha}
\def\p{\mathfrak p}
\def\Z{\mathbb Z}
\def\C{\mathbb C}
\def\L{\mathcal L}
\def\M{\mathcal M}
\def\s{\sigma}
\def\a{\alpha}
\def\p{\mathfrak p}
\def\<{\langle}
\def\>{\rangle}
\def\X{\mathcal X}
\def\L{\mathcal L}
\def\P{\mathbb P^1 (k)}
\def\Z{{\mathbb Z}}
\def\C{{\mathbb C}}
\def\M{{\mathcal M}}
\def\L{\mathcal L}
\def\Aut{\mbox{Aut }}
\def\t{\tau}
\def\a{{\alpha }}
\def\<{\langle}
\def\>{\rangle}
\def\s{\mathfrak s}
\def\p{\mathfrak p}
\def\P{\mathbb P}
\DeclareMathOperator\ord{ord}
\DeclareMathOperator\img{Img}
\DeclareMathOperator\re{Re}
\DeclareMathOperator\mult{mult}
\begin{document}
\begin{frontmatter}          % The preamble begins here.
%
%\pretitle{}
\title{Weierstrass points of superelliptic curves}

\runningtitle{Superelliptic curves}
%\subtitle{}

% Two or more authors:
\author[A]{\fnms{C.} \snm{Shor}} 
\author[B]{\fnms{T.} \snm{Shaska}}

\runningauthor{Shor/Shaska}
\address[A]{Department of Mathematics, \\  Western New England University,    \\   Springfield, MA,  USA; \\ E-mail: cshor@wne.edu }
\address[B]{Department of Mathematics, \\ Oakland University, Rochester, MI, USA; \\ E-mail: shaska@oakland.edu}

\begin{abstract}
In this lecture we give a brief introduction to Weierstrass points of curves and computational aspects of $q$-Weierstrass points on superelliptic curves. 
\end{abstract}

\begin{keyword}
hyperelliptic curves,   superelliptic curves,  Weierstrass points
\end{keyword}

\end{frontmatter}

%%%%%%%%%%% The article body starts:

\section*{Introduction}

These lectures are prepared as a contribution to the NATO Advanced Study Institute held in Ohrid, Macedonia, in August 2014.
The topic of the conference was on the arithmetic of superelliptic curves, and this lecture will focus on the Weierstrass points of such curves.
 Since the Weierstrass points are an important concept of the theory of Riemann surfaces and algebraic curves and serve as a prerequisite for studying the automorphisms of the curves we will give in this lecture a detailed account of holomorphic and meromorphic functions of Riemann surfaces, the proofs of the Weierstrass and Noether gap theorems, the Riemann-Hurwitz theorem, and the basic definitions and properties of higher-order Weierstrass points. 
%All of this material is covered in Part.~1. 

Weierstrass points of algebraic curves are defined as a consequence of the important theorem of Riemann-Roch in the theory of algebraic curves. As an immediate application, the set of Weierstrass points is an invariant of a curve which is useful in the study of the curve's automorphism group and the fixed points of automorphisms.

In Part~1 we cover some of the basic material on Riemann surfaces and algebraic curves and their Weierstrass points. We describe some facts on the fixed points of automorphisms and prove the Hurwitz theorem on the bound of the automorphism group.  All the material is well known. We will assume that the reader has some familiarity with basic definitions of the theory of algebraic curves, such as divisors, Riemann-Roch theorem, etc.  We will provide some of the proofs and for the other results we give precise references. 

In Part~2, we describe linear systems and inflection points with an eye toward the Wronskian form, which is useful in computing these inflection points.  Then, using a special linear system, we are able to define Weierstrass points.  We give the basic definitions of Weierstrass points using Riemann-Roch spaces as well as with spaces of holomorphic differentials.  We generalize this definition to discuss higher-order Weierstrass points (which we call $q$-Weierstrass points).  Properties of these points, along with proofs and references, are given.  We conclude this part with Hurwitz's theorem, which gives an upper bound for the number of automorphisms of a curve of genus $g$.  Weierstrass points feature prominently in the proof.  We also use Weierstrass points to prove some bounds on the number of fixed points of automorphisms.

In Part~3, we examine Weierstrass points in a few contexts.  First, we see some results pertaining to Weierstrass points on superelliptic curves, which can be thought of as generalizations of hyperelliptic curves.  After that, we investigate group actions on non-hyperelliptic curves of genus $g=3$.

The material of these lecture will be used throughout this book, especially in \cite{nato-4}, \cite{nato-5}, or in \cite{nato-15}. Most of the material of this lecture is assumed as prerequisite for the rest of this volume. 
For further details and some open problems on Weierstrass points of weight $q\geq 2$ the reader can check \cite{w-points}.

\smallskip

\noindent \textbf{Notation:} 
Throughout this paper $\X_g$ will denote a smooth, irreducible algebraic curve, defined over and algebraically closed field $k$ of characteristic zero or equivalently a closed, compact Riemann surface of genus $g\geq 2$.

%******************************************************
\bigskip
\noindent \textbf{Part 1: Riemann surfaces and their meromorphic functions} \\

In this section we briefly describe some of the basic results on the theory of curves and divisors. We skip some of the proofs, but precise references are provided for each result. Most of the material can be found on the following classical books \cite{fulton, fk}

We assume that the reader is familiar with basic complex analysis and the basic definitions of  Riemann surfaces. 
%*********************************************
%\chapter{Functions on Riemann surfaces, meromorphic functions}
\section{Holomorphic and meromorphic functions on Riemann surfaces}
Let $X$ be a Riemann surface, $p\in X$ and $f:W \to \C$, such that $W$ is a neighborhood of $p$.
We say that $f$ is \textbf{holomorphic at $p$ } if there exists a chart $\Phi: U \to V$ with $p\in U$ such that $f\circ \Phi^{-1}$ is holomorphic at $\Phi(p)$.
Then, we say that   $f$ is \textbf{holomorphic on the neighborhood $W$} if it is holomorphic at every point of $W$.  
The following lemma is straightforward.

\begin{lem} Let $X$ be a Riemann surface, $p\in X$, and $f$ a complex valued function defined in a neighborhood $U$ of $p$, say $f : U \to \C$. Then, the following hold true:

\begin{enumerate}
\item $f$ is holomorphic at $p$ if and only if  for any chart $\Phi:U \to V$, $p\in U$ we have $f\circ \Phi^{-1}$ is a holomorphic at $\Phi(p).$
\item $f$ is holomorphic at $w$  if and only if  there exists a set of charts  $\left\{\Phi_i: U_i \to V_i \right\}$, 
with $W\subseteq \bigcup_i U_i$ such that $f\circ \Phi_i^{-1}$ is holomorphic on $ \Phi_i(W\cap U_i)$.
\item If $f$ is holomorphic at $p$, then $f$ is holomorphic at a neighborhood at $p$.
\end{enumerate}
\end{lem}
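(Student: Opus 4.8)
The plan is to transport everything to open subsets of $\C$ via charts, where the corresponding statements are part of elementary complex analysis, and to use the one genuinely structural input: on a Riemann surface any two charts are holomorphically compatible, i.e.\ their transition maps are biholomorphic. None of the three parts requires a hard argument; the work is bookkeeping with chart domains and overlaps.

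For part (1), the ``if'' direction is immediate, since the definition of holomorphy at $p$ only demands the existence of \emph{one} chart with the stated property. For the ``only if'' direction, suppose $f$ is holomorphic at $p$, witnessed by a chart $\Psi:U'\to V'$ with $p\in U'$ and $f\circ\Psi^{-1}$ holomorphic at $\Psi(p)$. Let $\Phi:U\to V$ be an arbitrary chart with $p\in U$. On $\Phi(U\cap U')$ we may factor
\[
f\circ\Phi^{-1} = \bigl(f\circ\Psi^{-1}\bigr)\circ\bigl(\Psi\circ\Phi^{-1}\bigr),
\]
and the transition map $\Psi\circ\Phi^{-1}$ is holomorphic on the open set $\Phi(U\cap U')$ and sends $\Phi(p)$ to $\Psi(p)$. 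Since a composition of holomorphic maps is holomorphic, $f\circ\Phi^{-1}$ is holomorphic at $\Phi(p)$, as required.

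For part (3), choose a chart $\Phi:U\to V$ with $p\in U$ and $f\circ\Phi^{-1}$ holomorphic at $\Phi(p)$. By the elementary theory on domains of $\C$, $f\circ\Phi^{-1}$ is then holomorphic on some open neighborhood $V_0\subseteq V$ of $\Phi(p)$. Set $U_0:=\Phi^{-1}(V_0)$, an open neighborhood of $p$. For every $q\in U_0$ the single chart $\Phi$ (restricted) witnesses holomorphy of $f$ at $q$, so $f$ is holomorphic on $U_0$, hence on a neighborhood of $p$.

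For part (2), the forward direction follows from the definition together with part (1): if $f$ is holomorphic on $W$ then it is holomorphic at each point of $W$, so by (1) \emph{any} family of charts covering $W$, in particular one from the atlas, has the stated property. Conversely, given such a family $\{\Phi_i:U_i\to V_i\}$ with $W\subseteq\bigcup_i U_i$ and $f\circ\Phi_i^{-1}$ holomorphic on the open set $\Phi_i(W\cap U_i)$, fix any $p\in W$; pick $i$ with $p\in U_i$, so $\Phi_i(p)\in\Phi_i(W\cap U_i)$ and $f\circ\Phi_i^{-1}$ is holomorphic at $\Phi_i(p)$, whence $f$ is holomorphic at $p$. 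As $p\in W$ was arbitrary, $f$ is holomorphic on $W$. The only point needing a word of care throughout is that chart images of open sets are open, so that phrases like ``holomorphic on $\Phi_i(W\cap U_i)$'' are meaningful and localize correctly; this is built into the definition of a chart. Thus the ``main obstacle'' is merely to be careful that every set one restricts to is open and contains the relevant point.
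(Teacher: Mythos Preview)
Your proof is correct and is precisely the kind of argument the paper has in mind: the paper does not actually write out a proof of this lemma, declaring it ``straightforward'' and moving on. Your use of the holomorphic compatibility of charts (the transition map $\Psi\circ\Phi^{-1}$) for part~(1), the openness of holomorphy in $\C$ for part~(3), and the reduction of part~(2) to the pointwise definition via a covering chart are exactly the expected details, so there is nothing to add or contrast.
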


\noindent Next we will define singularities for Riemann surfaces.   Recall that for a function $f : \C \to \C$ we have defined singularities as follows: 

i)  $f(z)$ has a \textbf{removable singularity} at $z_0$ of it is possible to assign a complex number such that $f(z)$ becomes analytic, or $f(z)$ is bounded around $z_0$.

ii)  $f: U\backslash\{a\} \to \C$, where $U$ is open. Then, $a$ is an \textbf{essential singularity} if it is not a pole or a removable singularity.
 
Let $X$ be a Riemann surface, $p\in X$, $U$ a neighborhood of $p$, and $f: U \to \C$ a complex valued function and holomorphic. The function   $f$ is defined to have a  \textbf{removable singularity} at $p$  if and only if  $f\circ \Phi^{-1}$ has a removable singularity at $\Phi(p)$.     $f$ has a \textbf{pole at $p$}  if and only if  $f\circ \Phi^{-1}$ has a pole at $\Phi(p)$ \bigg (i.e. $f(z)=\frac{g(z)}{(z-\a)^n)}\bigg)$.  $f$ has an \textbf{essential singularity at $p$ }  if and only if  $f\circ \Phi^{-1}$ has an essential singularity at $\Phi(p)$.
 
\begin{lem} $f$ has a removable singularity if and only if  for every chart $\Phi: U \to V$ such that $p\in U$, the function $f\circ \Phi^{-1}$ has a removable singularity.
\end{lem}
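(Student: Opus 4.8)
The plan is to prove the two implications separately, with essentially all the content in the forward direction. The reverse implication is immediate: by the very definition of having a removable singularity at $p$, there is at least one chart $\Phi: U \to V$ with $p \in U$ for which $f \circ \Phi^{-1}$ has a removable singularity at $\Phi(p)$; so if the property holds for \emph{every} such chart, it holds in particular for the chart appearing in the definition. Hence the real work is to show that the property is independent of the choice of chart.

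For the forward direction, suppose $f \circ \Phi^{-1}$ has a removable singularity at $\Phi(p)$ for one chart $\Phi: U \to V$, and let $\Psi: U' \to V'$ be any other chart with $p \in U'$. On (a deleted neighborhood of $\Psi(p)$ inside) $\Psi(U \cap U')$ we have the identity of functions
\[
f \circ \Psi^{-1} = \left( f \circ \Phi^{-1} \right) \circ \left( \Phi \circ \Psi^{-1} \right).
\]
Here I would invoke the defining property of a Riemann surface: the transition map $T := \Phi \circ \Psi^{-1}$ is holomorphic — in fact biholomorphic — on $\Psi(U \cap U')$, and it sends $\Psi(p)$ to $\Phi(p)$. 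By hypothesis $f \circ \Phi^{-1}$ extends to a holomorphic function $g$ on a full (un-punctured) neighborhood of $\Phi(p)$; since $T$ is continuous, it maps some neighborhood of $\Psi(p)$ into the domain of $g$, so $g \circ T$ is a composition of holomorphic functions, hence holomorphic there, and it agrees with $f \circ \Psi^{-1}$ off $\Psi(p)$. Therefore $f \circ \Psi^{-1}$ has a removable singularity at $\Psi(p)$, as desired. (Equivalently, one may argue via boundedness and the classical Riemann removable singularity theorem: $f \circ \Phi^{-1}$ bounded near $\Phi(p)$ together with $T$ continuous forces $(f \circ \Phi^{-1}) \circ T$ bounded near $\Psi(p)$.)

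There is no serious obstacle here; the only points needing care are bookkeeping. One must restrict to the overlap $U \cap U'$ and possibly shrink neighborhoods so that all compositions are defined and the extension $g$ is available on the relevant set; and one should note explicitly that the chart transition maps of a Riemann surface are holomorphic by the definition of the complex structure — this is precisely what makes "removable singularity at $p$" well posed — and that a biholomorphism with $\Psi(p) \mapsto \Phi(p)$ carries punctured neighborhoods to punctured neighborhoods. With these remarks the lemma follows, and the same template (composition with a holomorphic transition map) yields the corresponding chart-independence statements for poles and essential singularities.
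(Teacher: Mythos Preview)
Your argument is correct and is the standard one: chart-independence of ``removable singularity'' follows because the transition map $T=\Phi\circ\Psi^{-1}$ is biholomorphic, so a holomorphic extension of $f\circ\Phi^{-1}$ composes with $T$ to give a holomorphic extension of $f\circ\Psi^{-1}$. The paper itself states this lemma without proof, so there is nothing to compare against; your write-up supplies exactly the routine verification the paper omits.
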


Summarizing we have that for any  holomorphic  function $f: U\backslash \{p\} \to V$,  the following statements hold:  
\begin{enumerate}
\item  If $|f(x)|$ is bounded in a neighborhood of $p$, then $f$ has removable singularity at $p$. Moreover, the limit $\lim_{x \to p}f(x)$ exists, and if $f(p):=\lim_{x \to p}f(x)$,  then $f$ is holomorphic at $p$.

\item  If $|f(x)| \to \infty$ as $x \to \infty$, then $f(x)$ has a pole at $p$.

\item  If $|f(x)|$ has no limit as $x \to \infty$, then $f(x)$ has an essential singularity at $p$.
\end{enumerate}

\begin{defn} A function $f$ on $X$ is \textbf{meromorphic  at a point} $p\in X$ if it is either holomorphic, has a removable singularity, or has a pole at $p$.
$f$ is \textbf{meromorphic on $X$} if it is meromorphic at every point of $X$.
\end{defn}
The following are elementary properties of meromorphic functions. 
\begin{exa} Let $f,g$ be meromorphic on $X$. Then, $f\pm g, f\cdot g, \frac{f}{g}$ are meromorphic on $X$ provided $g(x)\neq 0$ (i.e, is not identically zero).
\end{exa}

If $W\subset X$ is an open subset of the Riemann surface $X$ we denote the set of meromorphic functions of $W$ by 
\[ \M_X(W)= \{f:W \to \C \, | \, f \thinspace \text{is meromorphic}\}. \]
%
%\textbf{Laurent series}
%
Let $f : U\backslash \{p\} \to V$ be holomorphic. Let $z$ be the local coordinate on $X$ near $p$. Hence, $z=\Phi(x)$, which implies that $f\circ \Phi^{-1}$ is holomorphic near $f(p):= z_0$. Then, there exist a series expansion 
\[ f\bigg (\Phi^{-1}(z)\bigg) =\sum _n c_n (z-z_0)^n, \] 
which is called the \textbf{Laurent series} for $f$ about $p$ with respect to $\Phi$.  The Laurent series tells us about the nature of singularity at $p$.
\begin{enumerate}
\item $f$ has a removable singularity at $p$  if and only if the Laurent series has n\textbf{o negative terms}.
\item $f$ has a pole at $p$  if and only if the Laurent series has \textbf{finitely many negative terms}.
\item $f$ has essential singularity  if and only if the Laurent series has \textbf{infinitely many negative terms at $p$}.
\end{enumerate}
Let $f$ be a meromorphic function at $p$, and $z$ some local coordinate around $p$. Let the Laurent series be given by 
\[ f\bigg (\Phi^{-1}(z)\bigg) =\sum _n c_n (z-z_0)^n. \] 
\textbf{The order of $f$ at $p$,} denoted by $\ord_p(f)$, is 
\[\ord_p(f)= min\{n \, | \, c_n\neq 0\}.\]
\begin{lem} $\ord_p(f)$ is independent of the choice of the local coordinate $z$.
\end{lem}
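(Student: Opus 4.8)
The plan is to compare the Laurent expansions of $f$ with respect to two different local coordinates at $p$ and show that the index of the lowest nonvanishing coefficient is the same for both. Suppose $\Phi : U \to V$ and $\Psi : U' \to V'$ are two charts with $p \in U \cap U'$, and write $z = \Phi(x)$, $w = \Psi(x)$ for the associated local coordinates, normalized (after an affine change) so that $z_0 = \Phi(p) = 0$ and $w_0 = \Psi(p) = 0$. The transition map $T = \Psi \circ \Phi^{-1}$ is a biholomorphism between neighborhoods of $0$ in $\C$ with $T(0) = 0$, so it has a power series expansion $w = T(z) = a_1 z + a_2 z^2 + \cdots$ with $a_1 = T'(0) \neq 0$, since $T$ is biholomorphic.

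Let $m = \ord_p(f)$ computed with respect to $\Phi$, so that $f \circ \Phi^{-1}(z) = \sum_{n \geq m} c_n z^n$ with $c_m \neq 0$ (this sum may be finite or start at a negative index, but in all cases $m$ is well-defined as the minimal index with nonzero coefficient). To compute the order with respect to $\Psi$, I would write $f \circ \Psi^{-1}(w) = (f \circ \Phi^{-1}) \circ T^{-1}(w)$. Since $T^{-1}$ is also a biholomorphism fixing $0$, it has an expansion $T^{-1}(w) = b_1 w + b_2 w^2 + \cdots$ with $b_1 = 1/a_1 \neq 0$. Substituting, $f \circ \Psi^{-1}(w) = \sum_{n \geq m} c_n \bigl(b_1 w + b_2 w^2 + \cdots\bigr)^n$. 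The lowest power of $w$ that can occur in $\bigl(b_1 w + \cdots\bigr)^n$ is $w^n$, with coefficient $b_1^n$; no term with $n > m$ contributes a power of $w$ below $w^{m+1}$, and no term with $n < m$ is present. Hence the coefficient of $w^m$ in $f \circ \Psi^{-1}(w)$ is exactly $c_m b_1^m \neq 0$, and all coefficients of $w^n$ for $n < m$ vanish. Therefore $\ord_p(f)$ computed with respect to $\Psi$ equals $m$ as well.

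The only mild subtlety — and the step I would be most careful about — is handling the case $m < 0$ cleanly: one should note that $f \circ \Phi^{-1}$ then has the form $z^m h(z)$ with $h$ holomorphic and $h(0) \neq 0$, rewrite $f \circ \Psi^{-1}(w) = T^{-1}(w)^m \, h\bigl(T^{-1}(w)\bigr)$, and observe that $T^{-1}(w)^m = b_1^m w^m (1 + O(w))^m$ while $h(T^{-1}(w))$ is holomorphic and nonzero at $w = 0$; multiplying, the lowest-order term is $b_1^m h(0) w^m$ with nonzero coefficient. This makes the argument uniform in the sign of $m$ and avoids manipulating formal series with infinitely many negative terms. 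Since the two charts were arbitrary, $\ord_p(f)$ is independent of the choice of local coordinate, which is the claim.
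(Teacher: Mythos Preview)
Your proof is correct and follows essentially the same approach as the paper: expand the transition map between the two local coordinates as a convergent power series with nonzero linear term, substitute into the Laurent expansion of $f$, and observe that the lowest-order coefficient transforms to $c_m b_1^{\,m}\neq 0$, so the order is unchanged. Your treatment is in fact more careful than the paper's---you normalize the centers to $0$, and you explicitly handle the case $m<0$ by factoring $f\circ\Phi^{-1}(z)=z^m h(z)$ with $h(0)\neq 0$, which the paper glosses over---but the underlying idea is identical.
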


\proof
Let $\Psi : U^{\prime} \to V^{\prime}$ be another chart such that $p\in U^{\prime}$ and   $w=\Psi(x)$ near $p$ be   the local coordinate. Denote $w_0=\Psi(p)$.
The transition function $T(w)=\Phi\circ \Psi^{-1}$ expresses $z$ as a holomorphic function of $w$. 
If $T$ is invertible at $w_0$,  then  $T^{\prime}(w_0)\neq 0$. Hence, we have 
\[ z= T(w)= z_0+ \sum_{n\geq1} a_n(w-w_0)^n,\] 
where $a_1\neq 0$.

Let $\ord_p f(z)=c_{n_0}$ be the order of $f$ computed via $z$. Then, 
\[ z-z_0 = \sum_{n\geq1} a_n(w-w_0)^n\] 
is the Laurent series of $f(w)$ at $p$, where $z-z_0$ is the lowest term is  $a_{n_0}(w-w_0)^{n_0}$, and for $\sum_{n\geq1} a_n(w-w_0)^n$ the lowest possible order in $(w-w_0)$  
is  $c_{n_0}a_1^{n_0}(w-w_0)^{n_0}$. But $c_{n_0}\neq 0$, and $a_1\neq 0$, which implies that $\ord_p f(w)=n_0$. Hence,  $\ord_p f(z)=\ord_p f(w)=n_0$.  
\endproof
\noindent The following statements are true for any function $f$  holomorphic at $p$. 
\begin{enumerate}
\item $f$ is holomorphic at $p$  if and only if  $\ord_p f\geq 0$. Moreover, $f(p)=0$  if and only if  $\ord_p f>0$.
\item $f$ has a pole at $p$  if and only if  $\ord_p f<0$.
\item $f$ has either a zero or a pole at $p$  if and only if  $\ord_p f=0$. \\
\end{enumerate}
\noindent We say $f$ has a zero (resp. pole) of order $n$ at $p$ if $\ord_p f=n\geq1$ (resp. $\ord_p f= -n <0$). One immediately has the following.
\begin{lem}
\begin{enumerate}
\item $\ord_p(f\cdot g)=\ord_p(f) + \ord_p(g).$
\item $\ord_p\bigg(\frac{f}{g}\bigg)=\ord_p(f) - \ord_p(g).$
\item $\ord_p\bigg(\frac{1}{f}\bigg)=- \ord_p(f).$
\item $\ord_p  \left(f\pm g \right)\geq min\{\ord_p (f), \ord_p (g)\}.$ \\
\end{enumerate}
\end{lem}
%***************************
%\subsection{Rational functions}
Any rational function on the Riemann sphere is meromorphic, since it has only zeros and poles (no essential singularities). Let $f(z)=\frac{p(z)}{q(z)}$ be a rational function. Then, 
$ f(z)= c\cdot \prod_i (z-\lambda_i)^{e_i}$, 
where $\lambda_i$ are distinct complex numbers, and $e_i\in \Z$.
Thus,  $\ord_{\lambda_i}(f)=e_i$,  $\ord_{\infty} (f)=\deg p- \deg q$,  and $ \ord_x f=0$ if $x\neq \infty, \lambda_1,\cdots, \lambda_r$.
Also,  $\sum_{x\in X} \ord_x f=0$,  
where $X$ is the Riemann sphere.

% (this fact comes also from complex analysis).

%\subsection{Examples of Meromorphic Functions}

%\textbf{Meromorphic functions of the Riemann Sphere}

\begin{thm} Any meromorphic function of  $\C_{\infty}$ is a rational function. In other words,  $\M \left(\C_{\infty} \right)= \C(z)$.
\end{thm}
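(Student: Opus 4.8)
The plan is to show that a meromorphic function $f$ on $\C_\infty = \mathbb P^1(\C)$ has only finitely many poles, then subtract off principal parts to reduce to the case of a holomorphic function on all of $\C_\infty$, and finally argue that such a function is constant. First I would observe that the set of poles of $f$ is a discrete subset of the compact space $\C_\infty$, hence finite; call the poles in the finite plane $\lambda_1, \dots, \lambda_r$, with $f$ having a pole of order $n_i$ at $\lambda_i$, and treat the possible pole at $\infty$ separately using the coordinate $w = 1/z$. At each $\lambda_i$, the Laurent series of $f$ (which has only finitely many negative terms, by the characterization of poles recalled before Definition~3) supplies a principal part $P_i(z) = \sum_{j=1}^{n_i} c_{-j}^{(i)} (z-\lambda_i)^{-j}$, which is a rational function.

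Next I would set $g = f - \sum_{i=1}^r P_i$. By construction and by the additivity of order (Lemma on $\ord_p$), $g$ is holomorphic at every finite point of $\C_\infty$: the principal part $P_i$ exactly cancels the negative-order terms of $f$ at $\lambda_i$, while each $P_j$ with $j \neq i$ is holomorphic at $\lambda_i$. So $g$ is a holomorphic function on $\C$, i.e. an entire function, which is in addition meromorphic at $\infty$. The key step is then to analyze $g$ at $\infty$ using the chart $w \mapsto 1/w$: since $g$ is meromorphic there, $\ord_\infty(g)$ is some integer, finite on the negative side, so $|g|$ grows at most polynomially as $z \to \infty$. A bounded entire function is constant (Liouville), and more generally an entire function with polynomial growth of degree $< d$ is a polynomial of degree $< d$; so $g$ is a polynomial. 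But a nonconstant polynomial has a genuine pole at $\infty$ of order equal to its degree — which would then be a pole of $g$, contradicting that $g$ is holomorphic at $\infty$ if we arrange matters so $g$ has no pole at $\infty$ (handle the pole of $f$ at $\infty$, if any, by also subtracting its principal part in $w$, equivalently a polynomial in $z$). Thus $g$ is constant, and $f = g + \sum_i P_i$ is a rational function, so $\M(\C_\infty) \subseteq \C(z)$; the reverse inclusion was already noted before the theorem.

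The main obstacle — really the only substantive point — is the passage from ``holomorphic and meromorphic at $\infty$'' to ``polynomial'', which rests on Liouville's theorem (or its polynomial-growth refinement) from basic complex analysis; the rest is bookkeeping with Laurent series and the order function. One should be slightly careful about the case where $f$ itself has a pole at $\infty$: the cleanest route is to subtract a polynomial $P_0(z)$ realizing the principal part of $f$ at $\infty$ (in the local coordinate $w = 1/z$ this is a genuine principal part $\sum_{j\ge 1} b_j w^{-j}$, and $w^{-1} = z$), so that $g = f - \sum_{i=0}^r P_i$ is holomorphic on all of $\C_\infty$, compact, hence bounded, hence constant by Liouville. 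Then $f$ is visibly a sum of a constant and finitely many rational functions, i.e. rational.
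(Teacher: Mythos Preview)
Your argument is correct, but it follows a different route from the paper's. The paper proceeds \emph{multiplicatively}: it forms the rational function $r(z)=\prod_i (z-\lambda_i)^{e_i}$ matching all zeros and poles of $f$ in $\C$ (with their orders), so that $g=f/r$ is entire with no zeros in $\C$; then, since $g$ is meromorphic at $\infty$, its Taylor series $\sum c_n z^n$ must have only finitely many nonzero $c_n$ (else the expansion in $w=1/z$ has infinitely many negative terms), so $g$ is a polynomial; and a nonconstant polynomial would have a root in $\C$, contradiction, so $g$ is constant and $f=c\cdot r(z)$ is rational. Your approach is \emph{additive}: you subtract the principal parts at each pole (including the polynomial principal part at $\infty$) to land on a function holomorphic on all of $\C_\infty$, hence bounded, hence constant by Liouville. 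Your method avoids tracking zeros and does not invoke the fundamental theorem of algebra, at the cost of a little more bookkeeping with Laurent tails at each pole; the paper's method packages everything into a single product but needs the extra observation that the quotient, being zero-free and polynomial, must be constant. Both are standard and equally valid.
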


\proof
Let $f$ be meromorphic function on $\C_{\infty}$. Recall that $\C_{\infty}$ is compact and $f$ has finitely many zeros and poles. Let 
$ \{\lambda_1, \cdots , \lambda_r\}$ be the set of zeros and poles in $\C$. Assume that $\ord_{\lambda_i}(f)=e_i$.   Consider the function  $r(z)= \prod_i  \left(z-\lambda_i \right)^{e_i}$.  
Then, $r(z)$ and $f(z)$ have the same zeros and poles on $\C$. Then, $g(z)=\frac{f}{r(z)}$ is meromorphic function on $\C_{\infty}$, since 
$ f\in \M(\C_{\infty})$ and  $r\in \M  \left(\C_{\infty} \right)$.  
Thus,  $g(z)$ has no zeros and no poles in $\C$. Hence, $g(z)$ is holomorphic on $\C$, and so $g(z)$ has Taylor series $g(z)=\sum_{n=0}^{\infty} c_n z^n$, which converges everywhere on $\C$.

Since $g(z)$ is meromorphic at $z=\infty$, then $g \left(\frac{1}{z} \right)=\sum_{n=0}^{\infty} c_n \bigg(\frac{1}{z}\bigg)^n$ 
and  $g(w)=\sum_{n=0}^{\infty} c_n w^{-n}$ for a coordinate $w$, which means that $g(w)$ is meromorphic at $w=0$. This fact implies that $g(w)$ is a polynomial. 

If $g$ is constant, then $\frac{f}{r}$ is constant, and so $f$ is rational.
If $g$ is not constant, then it has no zeros in $\C$, and this is a contradiction.
\endproof

\begin{cor} Let $f$ be any meromorphic function on $\C_\infty$. Then,  $\sum_p \ord_p f=0$. 
\end{cor}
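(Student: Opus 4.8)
The plan is to deduce the identity from the structure theorem $\M(\C_\infty)=\C(z)$ proved just above, together with the order computations for rational functions recorded there. Since $\C_\infty$ is compact, $f$ has only finitely many zeros and poles, so $\ord_p f = 0$ for all but finitely many $p$, and $\sum_p \ord_p f$ is a genuine finite sum with no convergence subtleties to address.

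Concretely, I would first invoke the theorem to write $f(z) = c\prod_{i=1}^r (z-\lambda_i)^{e_i}$ with the $\lambda_i \in \C$ distinct and $e_i \in \Z$. From the remarks above, $\ord_{\lambda_i} f = e_i$, while $\ord_x f = 0$ for every $x \in \C$ not among the $\lambda_i$, so the only remaining point of $\C_\infty$ to examine is $\infty$; working in the chart $w = 1/z$ one finds $\ord_\infty f = -\sum_{i=1}^r e_i$ (each zero in $\C$ of multiplicity $e_i>0$ contributing $e_i$ and each pole of order $-e_i$ contributing $-e_i$, with the sign flipping when we pass to $w$). Summing over all points then gives $\sum_p \ord_p f = \sum_i e_i + \ord_\infty f = 0$.

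A cleaner packaging, which I would probably prefer to present, uses the additivity of the total-order functional: by the lemma $\ord_p(fg) = \ord_p f + \ord_p g$ (and its analogue for quotients), the assignment $f \mapsto \sum_p \ord_p f$ is a group homomorphism $\C(z)^{\times} \to \Z$. It therefore suffices to check that it vanishes on generators of $\C(z)^{\times}$: a nonzero constant has neither zeros nor poles, so its contribution is $0$, and a linear factor $z-\lambda$ contributes $\ord_\lambda(z-\lambda) + \ord_\infty(z-\lambda) = 1 + (-1) = 0$. Since every element of $\C(z)^{\times}$ is a product of such factors and their inverses, $\sum_p \ord_p f = 0$ in general.

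The only step that needs genuine care is the computation of $\ord_\infty f$: one must not forget to include the point at infinity, and its order has to be evaluated honestly through the coordinate $w = 1/z$ rather than read off the exponent in the coordinate $z$. Everything else is bookkeeping — indeed, since the identity $\sum_{x \in X}\ord_x f = 0$ for rational $f$ was already recorded above, the corollary is in essence just that remark combined with the theorem, and the proof amounts to making that link explicit.
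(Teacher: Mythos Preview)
Your proposal is correct and follows exactly the paper's approach: the paper's entire proof is the single sentence ``Every meromorphic function is rational,'' invoking the theorem $\M(\C_\infty)=\C(z)$ together with the already-recorded fact that $\sum_{x\in X}\ord_x f=0$ for rational $f$. Your write-up simply unpacks these two ingredients in more detail than the paper bothers to.
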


\proof Every meromorphic function is rational. 
%For rational functions this is true.
\endproof

%%%%*************************************************************
\subsection{Meromorphic functions on the projective line}

The same approach as above can be followed here with the only difference that we have to   homogenize to homogenous polynomials. The main result is the following.

\begin{thm} Every meromorphic function on $\P^1$ is a ratio of homogenous polynomials of the same degree.
\end{thm}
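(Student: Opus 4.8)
The plan is to reduce everything to the preceding theorem $\M(\C_\infty) = \C(z)$ by regarding $\P^1$ as the Riemann sphere $\C_\infty$ through the standard affine chart. Write $[X_0 : X_1]$ for homogeneous coordinates on $\P^1$, let $z = X_1/X_0$ be the coordinate on $U_0 = \{X_0 \neq 0\} \cong \C$, so that the remaining point is $[0:1]$, playing the role of $\infty$. A meromorphic function $f$ on $\P^1$ restricts to a meromorphic function on $U_0$ which is also meromorphic at $[0:1]$, hence defines an element of $\M(\C_\infty)$; by the previous theorem $f = p(z)/q(z)$ for polynomials $p, q \in \C[z]$, which we may take coprime.

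Next I would homogenize. If $p(z) = \sum_{i=0}^m a_i z^i$ with $a_m \neq 0$ and $q(z) = \sum_{j=0}^n b_j z^j$ with $b_n \neq 0$, set
\[ P(X_0, X_1) = \sum_{i=0}^m a_i X_0^{m-i} X_1^i, \qquad Q(X_0, X_1) = \sum_{j=0}^n b_j X_0^{n-j} X_1^j, \]
homogeneous of degrees $m$ and $n$, with $P(1,z) = p(z)$ and $Q(1,z) = q(z)$. To match degrees, multiply the polynomial of smaller degree by the appropriate power of $X_0$: replace $P$ by $X_0^{n-m}P$ if $m \le n$, and symmetrically otherwise, obtaining homogeneous $\widetilde P, \widetilde Q$ of the common degree $d = \max\{m,n\}$ with $\widetilde P/\widetilde Q = f$ on $U_0$.

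It then remains to check that the ratio $\widetilde P/\widetilde Q$, which is a well-defined function on $\P^1$ away from common zeros, agrees with $f$ on all of $\P^1$, i.e. at $[0:1]$. This is a local computation in the chart $U_1 = \{X_1 \neq 0\}$ with coordinate $w = X_0/X_1 = 1/z$: dehomogenize $\widetilde P$ and $\widetilde Q$ in $w$, compute the order of their ratio at $w = 0$, and compare with $\ord_\infty f = \deg p - \deg q$ as recorded in the discussion of rational functions above. Since two meromorphic functions on a Riemann surface that agree on the dense open set $U_0$ agree everywhere, this completes the proof.

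The main obstacle — really the only delicate point — is the bookkeeping at infinity: one must choose the power of $X_0$ so that $\widetilde P$ and $\widetilde Q$ neither acquire a spurious common factor of $X_0$ nor misrepresent the order of $f$ at $[0:1]$. Once the degrees are matched and the coprimality of $p$ and $q$ is invoked, the conclusion is forced.
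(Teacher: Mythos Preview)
Your proposal is correct and follows exactly the approach the paper indicates: reduce to the preceding theorem $\M(\C_\infty)=\C(z)$ via the standard identification of $\P^1$ with the Riemann sphere, and then homogenize the resulting rational function to a ratio of homogeneous polynomials of equal degree. The paper gives no detailed argument beyond the remark that ``the same approach as above can be followed here with the only difference that we have to homogenize,'' which is precisely what you carry out.
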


\begin{cor} Let $f$ be any meromorphic function on $\P^1$. Then, $\sum_p \ord_p f=0$. 
\end{cor}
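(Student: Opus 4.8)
The plan is to deduce this Corollary directly from the preceding Theorem on meromorphic functions on $\P^1$, exactly as the analogous Corollary on $\C_\infty$ followed from the fact that every meromorphic function there is rational. So the first step is to invoke the Theorem: any meromorphic function $f$ on $\P^1$ can be written as $f = F/G$ where $F$ and $G$ are homogeneous polynomials in two variables $x_0, x_1$ of the same degree $d$. Since the base field $k$ is algebraically closed, both $F$ and $G$ factor completely into linear forms, so we may write
\[ f = c \cdot \prod_{i} \ell_i(x_0,x_1)^{e_i}, \]
where the $\ell_i$ are pairwise non-proportional linear forms, $c \in k^\times$, and $e_i \in \Z$ with $\sum_i e_i = \deg F - \deg G = 0$ because $F$ and $G$ have equal degree.

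Next I would identify the points of $\P^1$ where $f$ has nonzero order with the zeros of the linear forms appearing in this factorization. Each non-proportional linear form $\ell_i$ vanishes at exactly one point $p_i \in \P^1$, and these points are distinct. Working in an affine chart around $p_i$ (say the chart where the other coordinate is nonzero), the local expression of $f$ has order exactly $e_i$ at $p_i$, since the other factors $\ell_j^{e_j}$ for $j \neq i$ are units near $p_i$ and $c$ is a nonzero constant. At every other point $p$ of $\P^1$, all the $\ell_i$ are units, so $\ord_p f = 0$. This uses Lemma on additivity of $\ord_p$ under products and quotients, together with the fact (established earlier) that $\ord_p$ is independent of the choice of local coordinate, so the computation in a convenient chart is legitimate.

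Finally I would sum: only finitely many points contribute, and
\[ \sum_{p \in \P^1} \ord_p f = \sum_i e_i = \deg F - \deg G = 0. \]
This completes the proof. There is no real obstacle here; the only point requiring a little care is checking that the order at $p_i$ read off from the homogeneous factorization agrees with the intrinsic $\ord_{p_i}(f)$ defined via a chart — i.e. that dehomogenizing in a suitable affine coordinate genuinely produces the local Laurent expansion — but this is routine and parallels the rational-function computation on $\C_\infty$ done just above. Alternatively, one could bypass the factorization entirely and simply note that $\P^1$ and $\C_\infty$ are isomorphic as Riemann surfaces, so the statement transports immediately from the earlier Corollary; I would mention this as the quickest justification but present the factorization argument as the self-contained one.
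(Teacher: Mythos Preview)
Your proposal is correct and follows exactly the approach the paper intends: the paper states this corollary immediately after the theorem that every meromorphic function on $\P^1$ is a ratio of homogeneous polynomials of the same degree, and (as with the analogous corollary for $\C_\infty$, whose entire proof reads ``Every meromorphic function is rational'') the intended argument is precisely the factorization into linear forms that you carry out. Your write-up is in fact more detailed than anything the paper provides here, and your closing remark about transporting the result via the isomorphism $\P^1\cong\C_\infty$ is also in the spirit of the surrounding text.
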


\noindent The meromorphic functions on a complex torus are more difficult to describe as we will see in the next few paragraphs. 
%**********************************************
\subsection{Meromorphic functions on a complex torus}
Fix $\t$   in the upper plane and  consider the lattice $L=\Z+\t\Z$. Let $X$ be the complex torus $X=\C\diagup L$.  
%We construct meromorphic functions in $X$ by taking $L$-periodic meromorphic functions on $\C$.
For $\t \in \C$,  such that   $\img \t>0$  we define the theta-function  as
\begin{equation} \label{theta}
\Theta(z)=\sum_{n=-\infty}^{\infty} e^{\pi i(n^2\t+2nz)}.
\end{equation}
Then, the following hold: \\

i)  $\Theta(z)$ converges absolutely and uniformly on $\C$. 

ii)  $\Theta(z)$ is an analytic function on $\C$. 

iii)  $\Theta(z+1)= \Theta(z)$,  for every $z\in \C$, so $\Theta$ is periodic.

iv)  $\Theta(z+\t)= e^{-\pi i(\t+2z)}\cdot \Theta(z)$, $\forall z\in \C$. \\

\noindent Thus, $\Theta (z)$ is an analytic function and the series in Eq.~\eqref{theta} is its Fourier series. See the chapter on theta functions \cite{nato-4}  in this volume for more details.

\begin{prop} Fix a positive integer $d$, and choose any two sets of $d$ complex numbers $\{x_i\}$ and $\{y_i\}$ such that $\sum x_i-\sum y_i$ is an integer. Then, 
\[ \re (z)=\frac{\prod_i \Theta^{(x_i)}(z)}{\prod_j \Theta^{(y_i)}(z)} \] is a meromorphic $L$-periodic function on $\C$, and a meromorphic function on $\C\diagup L$.
\end{prop}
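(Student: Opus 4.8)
The plan is to derive everything from the two functional equations (iii) and (iv) for $\Theta$, together with the single hypothesis $\sum_i x_i - \sum_i y_i \in \Z$. Write the translated theta functions as $\Theta^{(a)}(z) := \Theta(z-a)$; from (iii) and (iv) one reads off
\[ \Theta^{(a)}(z+1) = \Theta^{(a)}(z), \qquad \Theta^{(a)}(z+\t) = e^{-\pi i\left(\t + 2(z-a)\right)}\,\Theta^{(a)}(z). \]
Since $\Theta$ is analytic on $\C$ by (ii) and is not identically zero, the denominator $\prod_i \Theta^{(y_i)}$ is a nonzero analytic function, so $\re(z)$ is a ratio of holomorphic functions with non-vanishing denominator and hence is meromorphic on $\C$. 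It then only remains to prove $L$-periodicity.

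First I would check invariance under $z \mapsto z+1$: by the first relation above every factor in the numerator and denominator is unchanged, so $\re(z+1) = \re(z)$ at once. The real content is invariance under $z \mapsto z+\t$. Applying the second relation to each of the $d$ factors in the numerator and to each of the $d$ factors in the denominator, the common automorphy factor $\left(e^{-\pi i\t}\,e^{-2\pi i z}\right)^{d}$ occurs in both numerator and denominator and cancels, leaving
\[ \re(z+\t) = \frac{\prod_i e^{2\pi i x_i}}{\prod_i e^{2\pi i y_i}}\cdot\re(z) = e^{2\pi i\left(\sum_i x_i - \sum_i y_i\right)}\cdot\re(z). \]
By hypothesis the exponent is $2\pi i$ times an integer, so the scalar equals $1$ and $\re(z+\t) = \re(z)$. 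This is the only place the hypotheses enter: equality of the two cardinalities $d$ is exactly what makes the $z$- and $\t$-dependent parts cancel, and integrality of $\sum_i x_i - \sum_i y_i$ is exactly what makes the leftover constant trivial.

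Having shown $\re(z+1) = \re(z) = \re(z+\t)$, the function $\re$ is invariant under the full lattice $L = \Z + \t\Z$. A meromorphic function on $\C$ invariant under $L$ descends to the quotient: the projection $\pi : \C \to \C/L$ is a holomorphic covering, so about any point of $\C/L$ it has holomorphic local sections, and pulling $\re$ back through them produces meromorphic function germs that are consistent by $L$-periodicity and patch together to a meromorphic function on $\C/L$. I do not expect a genuine obstacle here: the whole argument is a short computation with automorphy factors, the only steps needing a little care being the bookkeeping of the exponents in the $\t$-translation and the (standard) remark that $\Theta$, and hence the denominator, is not identically zero.
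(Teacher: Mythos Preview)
Your proof is correct and follows essentially the same route as the paper's: compute the automorphy factors of the translated theta functions under $z\mapsto z+1$ and $z\mapsto z+\t$, observe that with $d$ factors top and bottom the $z$- and $\t$-dependent parts cancel, and use the integrality hypothesis to kill the remaining constant $e^{2\pi i(\sum x_i-\sum y_i)}$. The only difference is cosmetic: the paper defines $\Theta^{x}(z):=\Theta\!\left(z-\tfrac12-\tfrac{\t}{2}-x\right)$ (shifted so that the simple zero sits at $z=x$, a normalization needed later when one wants to build functions with prescribed zeros and poles), whereas you take $\Theta^{(a)}(z)=\Theta(z-a)$; this shift by a fixed constant does not affect $\sum x_i-\sum y_i$, so the periodicity computation is identical, and the paper's extra remarks about locating the zeros of $\Theta$ are not actually used in proving the proposition as stated.
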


\proof We give a sketch of the proof. 
First show that   $\Theta(z_0)=0$  if and only if  $\Theta(z_0+m+n\t)=0$, $\forall m,n \in \Z$.  
Then, if  $\Theta(z_0)=0$, then $z_0$ is a simple zero.
 Show that all zeros of $\Theta$ are at $\frac{1}{2}+\frac{\t}{2}+m+n\t$. 
 
Denote by $\Theta^x (z):= \Theta \left(z-\frac{1}{2}-\frac{\t}{2}-x  \right)$ and check that
\begin{itemize}
\item $\Theta^x(z+1)=\Theta^x(z)$.
\item $\Theta^x(z+\t)=-e^{-2\pi i(z-x)}\cdot \Theta^x(z)$.
\end{itemize}
Then, 
$ \re (z)=\frac{\prod_i \Theta^{(x_i)}(z)}{\prod_j \Theta^{(y_i)}(z)} $
is meromorphic on $\C$, and 
$\re (z+1)=\re(z)$,  and  $\re   (z+\t)  \neq \re (z)$.
Indeed, 
\[\re (z+\t)= (-1)^{m-n} \cdot e^{-2\pi i \left[(m-n)z+\sum_j y_j-\sum_i x_i \right]}\cdot \re(z).\]
Hence, we need to show that 
\[(-1)^{m-n} e^{-2\pi i[(m-n)z+\sum_j y_j-\sum_i x_i]}=1,\] 
for all $z\in C$.
If $m=n$, and $\sum_j y_j-\sum_i x_i\in \Z$, then $e^{-2\pi is}=1$. Then, $\re(z)$ is meromorphic on $\C\diagup L$. The rest of the details are left to the reader. 
\endproof

Indeed,   every meromorphic function on $\C/L$ is of this form. 

\begin{thm}
Any meromorphic function of a complex torus is given as a ration of translated theta-functions. 
\end{thm}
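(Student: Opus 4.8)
The plan is to prove the converse of the Proposition. Let $f$ be a meromorphic function on $X=\C/L$; we may assume $f$ is non-constant, since a nonzero constant is already the ratio of two equal products of translated theta-functions. Equivalently, $f$ is an $L$-periodic (elliptic) meromorphic function on $\C$, and since $X$ is compact it has only finitely many zeros and poles in a fundamental parallelogram $\Pi$ for $L$. After translating $\Pi$ by a small generic vector we may assume that no zero or pole of $f$ lies on $\partial\Pi$; list the zeros and poles inside $\Pi$, repeated according to multiplicity, as $x_1,\dots,x_n$ and $y_1,\dots,y_m$ respectively.

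The first key step is to extract the two classical constraints on this data. Applying the argument principle to $\frac{1}{2\pi i}\oint_{\partial\Pi}\frac{f'(z)}{f(z)}\,dz$ and using that $f'/f$ is $L$-periodic, the integrals over opposite edges of $\Pi$ cancel, so the integral is $0$ and hence $n=m$: $f$ has as many zeros as poles. Applying the same idea to $\frac{1}{2\pi i}\oint_{\partial\Pi} z\,\frac{f'(z)}{f(z)}\,dz$, the opposite edges no longer cancel but their contributions sum to a period of $L$ times an integer, so this integral lies in $L$; by the argument principle it equals $\sum_i x_i-\sum_j y_j$, whence $\sum_i x_i-\sum_j y_j\in L$. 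Writing this element as $a+b\t$ with $a,b\in\Z$ and replacing the representative $x_1$ by $x_1-b\t$ (the same point of $X$), we may assume $\sum_i x_i-\sum_j y_j\in\Z$. Thus the hypotheses of the Proposition hold with $d=n=m$.

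Now set
\[ \re(z)=\frac{\prod_{i=1}^{n}\Theta^{(x_i)}(z)}{\prod_{j=1}^{n}\Theta^{(y_j)}(z)}. \]
By the Proposition this is meromorphic on $X$, and by the description of the zeros of $\Theta$ used in the proof of the Proposition — every zero of $\Theta^{x}$ is simple, and the zeros of $\Theta^{x}$ are exactly the points congruent to $x$ modulo $L$ — the function $\re$ has precisely the zeros $x_i$ and poles $y_j$, with the same multiplicities as $f$. Hence $g(z)=f(z)/\re(z)$ is a meromorphic function on the compact surface $X$ having neither zeros nor poles; therefore $g$ is holomorphic on $X$, hence bounded, hence constant — by the same Liouville-type argument used to show $\M(\C_\infty)=\C(z)$. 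Therefore $f=c\cdot\re$ for some constant $c$, and absorbing $c$ into one of the theta factors exhibits $f$ as a ratio of translated theta-functions.

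The step I expect to be the main obstacle is the second constraint, $\sum_i x_i\equiv\sum_j y_j\pmod L$. One must set up the contour integral of $z\,f'(z)/f(z)$ over $\partial\Pi$ with care, compute how the integrals over the two pairs of opposite edges combine — this is precisely where the period $\t$ enters, producing an element of $L$ rather than $0$ — and rely on the earlier translation of $\Pi$ to keep all zeros and poles off the boundary. Everything else is routine: the counting identity $n=m$, the identification of the divisor of $\re$ from the known zero set of $\Theta$, and the final observation that a zero-free, pole-free meromorphic function on a compact Riemann surface is constant.
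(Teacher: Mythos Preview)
Your argument is correct and is exactly the standard proof (the one in Miranda, Prop.~4.13) that the paper merely cites without reproducing; the paper itself gives no argument beyond the reference. One cosmetic point: the final remark about ``absorbing $c$ into one of the theta factors'' is a bit loose, but since the theorem as stated only asks that $f$ be expressible as a ratio of translated theta-functions, the constant multiple $c\cdot\re(z)$ already qualifies and nothing further is needed.
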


\proof See \cite[Prop. 4.13, pg. 50]{Miranda}. \qed

\begin{rem}
The above theorem, highlights the special role of theta functions and why they are so important on the theory of algebraic curves.  In Chapter~\cite{nato-4} the reader can find even a more historical view of the important role of theta functions in development of algebraic geometry. 
\end{rem}

\begin{cor} Let $f$ be any meromorphic function on a complex torus $\C/L$. Then,  $\sum_p \ord_p f=0$. 
\end{cor}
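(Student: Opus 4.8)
The plan is to deduce this corollary directly from the preceding theorem characterizing meromorphic functions on a complex torus as ratios of translated theta-functions, together with the explicit divisor information for $\Theta$ established in the proof of the Proposition above. First I would recall that a nonzero meromorphic function $f$ on $X = \C/L$ has only finitely many zeros and poles, so $\sum_p \ord_p f$ is a finite sum and the statement makes sense. By the theorem, we may write $f(z) = c\cdot \prod_i \Theta^{x_i}(z)\big/\prod_j \Theta^{y_j}(z)$ for suitable translation parameters with $\sum_i x_i - \sum_j y_j \in \Z$, where $\Theta^{x}(z) = \Theta\!\left(z - \tfrac12 - \tfrac{\t}{2} - x\right)$.

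The key point, already extracted in the proof of the Proposition, is that each zero of $\Theta$ is \emph{simple} and the zero set of $\Theta$ is exactly the lattice translates of $\tfrac12 + \tfrac{\t}{2}$; hence $\Theta^{x}$ has a single simple zero in each fundamental domain, located at (the class of) $x$, and no poles. Therefore, on the torus, $\ord_p(\Theta^{x_i})$ contributes $+1$ at the point $x_i$ and $0$ elsewhere, so $\sum_p \ord_p(\Theta^{x_i}) = 1$, and similarly $\sum_p \ord_p(\Theta^{y_j}) = 1$. Using the additivity of $\ord_p$ over products and quotients (the Lemma on $\ord_p(f\cdot g)$ and $\ord_p(f/g)$) and summing over all $p \in X$, I get
\[
\sum_p \ord_p f = \sum_i \sum_p \ord_p(\Theta^{x_i}) - \sum_j \sum_p \ord_p(\Theta^{y_j}) = \#\{x_i\} - \#\{y_j\}.
\]
Since the theorem produces a representation with equally many factors in numerator and denominator (indeed the constraint $\sum x_i - \sum y_j \in \Z$ is what makes the ratio $L$-periodic, and one may always pad with extra equal translates to balance the counts — or invoke directly that a genuine representation has $d$ factors top and bottom), this difference is zero, giving $\sum_p \ord_p f = 0$.

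The one step needing care — the main (mild) obstacle — is justifying that the representation from the theorem really can be taken with the same number of theta-factors upstairs and downstairs, rather than merely with balanced \emph{sums} of parameters. This is handled by noting that if the naive ratio had unequal counts, the quasi-periodicity factors computed in the Proposition ($\Theta^x(z+\t) = -e^{-2\pi i(z-x)}\Theta^x(z)$) would not cancel — the residual factor $(-1)^{m-n}e^{-2\pi i[(m-n)z + \sum y_j - \sum x_i]}$ depends on $z$ precisely when $m \neq n$ — so $L$-periodicity forces $m = n$, i.e. equal numbers of factors. Alternatively, and more cleanly, one can avoid the issue entirely by the standard contour-integration argument: integrate $\tfrac{1}{2\pi i}\,\tfrac{f'}{f}$ around the boundary of a fundamental parallelogram (translated to avoid zeros and poles), and observe that the integrals over opposite sides cancel by $L$-periodicity of $f'/f$, so the total is $0$; by the argument principle this total equals $\sum_p \ord_p f$. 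I would present the theta-function argument as the main line and mention the contour-integral proof as a remark, since it generalizes the earlier corollaries for $\C_\infty$ and $\P^1$ in a uniform way.
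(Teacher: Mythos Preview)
Your proposal is correct and follows exactly the route the paper intends: the corollary is stated immediately after the theorem that every meromorphic function on $\C/L$ is a ratio of translated theta-functions, with no separate proof written out, so the implicit argument is precisely the one you spell out in detail. Your treatment of the one subtle point --- that the representation must have equally many factors top and bottom, forced by the $z$-dependence of the quasi-periodicity factor when $m\neq n$ --- is correct and in fact is already baked into the paper's Proposition, which from the start takes two sets of $d$ complex numbers each.
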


%   proof    page 42

%********************************
\subsection{Meromorphic Functions on Smooth Plane Curves}

Let $X$ be a smooth plane curve defined by $f(x,y)=0$, where $x,y$ are holomorphic functions. Any polynomial $g(x,y)$ is holomorphic. This means that any rational $r(x)=\frac{g(x,y)}{h(x,y)}$ is meromorphic as long as $h(x,y)\neq 0$ on $X$.\\

Clearly: if $f(x,y) \, | \, g(x,y)$, then $h(x,y)\equiv 0$ on $X$. Indeed this is only when $h(x,y)$ could vanish:

\begin{thm}[Hilbert's Nullstellensatz]
Let $h(x_1,\cdots, x_2)$ be a polynomial vanishing everywhere, an irreducible polynomial $f(x_1,\cdots,x_n)$ vanishes. Then,$f \,| \, h$.
Hence, $\frac{g}{h}$ is meromorphic on the affine plane curve $f=0$ if $f\nmid h$.
\end{thm}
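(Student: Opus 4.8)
The plan is to deduce this from the standard form of Hilbert's Nullstellensatz together with the fact that $k[x_1,\dots,x_n]$ is a unique factorization domain; reading the assertion in its intended form, it says that if a polynomial $h$ vanishes at every point where an irreducible polynomial $f$ vanishes, then $f\mid h$. First I would recall the \emph{weak} Nullstellensatz: since $k$ is algebraically closed, every maximal ideal of $k[x_1,\dots,x_n]$ has the form $(x_1-a_1,\dots,x_n-a_n)$ with $(a_1,\dots,a_n)\in k^n$; equivalently, every proper ideal has a common zero in $k^n$. From this one obtains the \emph{strong} form $I(V(J))=\sqrt{J}$ for every ideal $J$ by the Rabinowitsch trick (adjoin an auxiliary variable $t$, feed the ideal together with $1-t\,h$ into the weak form, and conclude that a power of $h$ lies in $J$). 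I would take the strong form as known, citing \cite{fulton}.

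Next I would specialize to the principal ideal $J=(f)$ with $f$ irreducible. Because $k[x_1,\dots,x_n]$ is a UFD, an irreducible element generates a prime ideal, so $(f)$ is prime and in particular radical, i.e.\ $\sqrt{(f)}=(f)$. Hence the strong Nullstellensatz yields $I(V(f))=(f)$. Unwinding the definitions, a polynomial $h$ that vanishes at every point of the curve $V(f)=\{f=0\}$ must therefore be divisible by $f$, which is exactly the claim.

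Finally, for the consequence about meromorphic functions: if $f\nmid h$, then by the contrapositive of what has just been proved, $h$ is not identically zero on $X=\{f=0\}$. Thus $g$ and $h$ are restrictions of polynomials, hence holomorphic on $X$, with $h\not\equiv 0$, so by the elementary closure properties of meromorphic functions recorded in the Example above, the quotient $g/h$ is meromorphic on the affine plane curve $X$.

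I expect the only genuine obstacle to be the weak Nullstellensatz itself — the input that algebraic closedness of $k$ pins down the maximal ideals of the polynomial ring — which is a real theorem rather than a formal manipulation; once it (or the strong form) is granted, the reduction via primality of $(f)$ in a UFD is routine. In a lecture-notes setting I would simply quote the strong Nullstellensatz and present only the short UFD reduction and the meromorphy corollary.
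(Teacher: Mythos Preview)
Your argument is correct: the strong Nullstellensatz $I(V(J))=\sqrt{J}$ together with the fact that $(f)$ is prime (since $k[x_1,\dots,x_n]$ is a UFD and $f$ is irreducible) gives $I(V(f))=(f)$, whence $f\mid h$; the meromorphy consequence then follows from the closure properties already recorded in the paper. This is exactly the standard route, and there is nothing to object to.

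As for comparison with the paper: there is nothing to compare against. The paper \emph{states} the theorem without proof and moves directly to a remark about the coordinate ring and function field of $X$. Your write-up therefore supplies more than the paper does; in the spirit of the lecture-notes style of the paper (which elsewhere gives references rather than full proofs for standard results), the most faithful approach would be your final suggestion---quote the strong Nullstellensatz from \cite{fulton} and present only the short UFD reduction and the meromorphy corollary.
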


\begin{rem} $X$ is defined by $ f(x, y)=0$.  Let $I=\langle f\rangle$, which means that $k[x]= k[x,y]/ \langle I\rangle$, and $k(x)= k(x,y)/  \langle I\rangle$ is the quotient field of $k[x]$.
\end{rem}

%**********************************************************************************************************************
\section{Holomorphic functions between Riemann surfaces}

%\textbf{Goal:} To create the category of compact Riemann surface. So we need maps between objects.

%\subsection{Holomorphic maps}

Let $X, Y$ be two Riemann surfaces. A map $f:X \to Y$ is holomorphic at $p\in X$  if and only if  there exist charts: 
$ \Phi_1 : U_1 \to V_1$  on $X$ 
and    $\Phi_2 : U_2 \to V_2$ on $Y$ 
such that $p\in U_1, F(p)\in U_2$, and 
\[ \Phi_2\circ F\circ \Phi_1^{-1} \] 
is holomorphic at $\Phi_1(p)$.  Then we have the following lemma. 
\begin{lem} Let $F: X \to Y$ be a map between Riemann surfaces.  Then, we have

i)  $F$ is holomorphic at $p$  if and only if  for any pair of charts $\Phi_i: U_i \to V_i$, $i=1,2$ such that $p\in U_1$, $F(p)\in U_2$ we have $\Phi_2\circ F\circ \Phi_1^{-1}$ is holomorphic at $\Phi_1(p)$.

ii)  $F$ is holomorphic on $W$  if and only if  there are two collection of charts  $\{\Phi_1^{(i)}:U_1^{(i)} \to V_1^{(i)}\}$  on $X$ such that $ W\subset \bigcup_i U_1^{(i)}$, and  
$ \{\Phi_2^{(j)}:U_2^{(j)} \to V_2^{(j)}\}$   on $Y$ with  $ F(W)\subset \bigcup_j U_2^{(j)}$, such that $\Phi_2^{(j)}\circ F\bigg(\Phi_1^{(i)^{-1}}\bigg) $ is holomorphic for every $i,j$ where it is defined.

\end{lem}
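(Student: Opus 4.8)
The plan is to reduce both parts to a single structural fact about Riemann surfaces — that the transition functions of the complex atlas are holomorphic (indeed biholomorphic) — combined with the elementary fact that a composition of holomorphic maps between open subsets of $\C$ is again holomorphic (this is the analogue, for maps, of the argument already used for functions in the first lemmas of this section).

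For part (i), the direction ``$\Leftarrow$'' is immediate, since the definition of holomorphicity of $F$ at $p$ only asks for the \emph{existence} of one admissible pair of charts, so holomorphicity of the composite for \emph{every} pair certainly implies it for one. For ``$\Rightarrow$'', I would fix an arbitrary admissible pair $\Phi_1 : U_1 \to V_1$ on $X$, $\Phi_2 : U_2 \to V_2$ on $Y$ with $p \in U_1$, $F(p) \in U_2$, and take the pair $\Psi_1, \Psi_2$ provided by the hypothesis, for which $\Psi_2 \circ F \circ \Psi_1^{-1}$ is holomorphic at $\Psi_1(p)$. First I would shrink $U_1$ to an open neighborhood of $p$ contained in $U_1 \cap U_1' \cap F^{-1}(U_2 \cap U_2')$ — this is open because near $p$ the map $F$ is, by its very definition, locally expressible as a holomorphic and hence continuous map, and it is nonempty since it contains $p$ — so that on the smaller domain all four charts apply simultaneously and $F$ lands in $U_2 \cap U_2'$. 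There one has the identity
\[
\Phi_2 \circ F \circ \Phi_1^{-1} \;=\; \left(\Phi_2 \circ \Psi_2^{-1}\right) \circ \left(\Psi_2 \circ F \circ \Psi_1^{-1}\right) \circ \left(\Psi_1 \circ \Phi_1^{-1}\right),
\]
where the first factor is a transition function of the atlas of $Y$, the last is a transition function of the atlas of $X$ (both holomorphic by the definition of a Riemann surface), and the middle factor is holomorphic at $\Psi_1(\Phi_1^{-1}(\Phi_1(p))) = \Psi_1(p)$ by hypothesis. Composing, $\Phi_2 \circ F \circ \Phi_1^{-1}$ is holomorphic at $\Phi_1(p)$, as required.

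For part (ii) I would simply globalize (i) using the local nature of holomorphicity. If $F$ is holomorphic on $W$, it is holomorphic at each $p \in W$, and for any admissible collections $\{\Phi_1^{(i)}\}$, $\{\Phi_2^{(j)}\}$ as in the statement, part (i) shows $\Phi_2^{(j)} \circ F \circ (\Phi_1^{(i)})^{-1}$ is holomorphic at $\Phi_1^{(i)}(p)$ for every $p$ in its (open) domain $\Phi_1^{(i)}(W \cap U_1^{(i)} \cap F^{-1}(U_2^{(j)}))$, hence holomorphic there. Conversely, given such collections with the stated holomorphicity, for $p \in W$ pick $i$ with $p \in U_1^{(i)}$ and $j$ with $F(p) \in U_2^{(j)}$ — possible since the collections cover $W$ and $F(W)$ — so that $\Phi_1^{(i)}, \Phi_2^{(j)}$ form an admissible pair at $p$ and the composite is holomorphic at $\Phi_1^{(i)}(p)$; by definition $F$ is holomorphic at $p$, and $p$ was arbitrary.

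The only genuinely delicate point is the domain bookkeeping in part (i): one must pass to a small enough neighborhood of $p$ on which the triple composition literally makes sense, and this rests on the local continuity of $F$, which is automatic from its being locally a holomorphic map. Everything else is formal, being a consequence of ``holomorphic $\circ$ holomorphic $=$ holomorphic'' and of the defining property of a complex atlas that its transition maps are holomorphic.
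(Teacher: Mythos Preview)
Your argument is correct and is the standard one: use the holomorphicity of transition maps together with the chain rule for holomorphic functions, after shrinking domains so that the triple composition is defined. The only cosmetic slip is that you write $U_1'$, $U_2'$ for the domains of $\Psi_1$, $\Psi_2$ without introducing that notation explicitly; otherwise the bookkeeping is fine, including your remark that local continuity of $F$ near $p$ is available from the hypothesis chart pair $(\Psi_1,\Psi_2)$.

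As for comparison with the paper: the paper does not give a proof of this lemma at all---it simply refers the reader to \cite[Chapter II]{Miranda}. Your write-up is exactly the argument one finds there (and in any treatment of the subject), so there is no genuine methodological difference to discuss; you have just supplied what the paper leaves to the reference.
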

\begin{proof} See \cite[Chapter II]{Miranda}
\end{proof}

Let $F: X \to Y$ as above. Then, for every open set $W\subset Y$ we have 
\[ \mathcal O_Y  (W)=\{ \textbf{the ring of holomorphic functions on  } \, W \}.\]
 $F^{-1}(W)$ is open in $X$. Then, we have $\mathcal O_X\bigg( F^{-1}(W)\bigg)$.
There is the induced map 
\[
\begin{split}
F^{*}: \mathcal O_Y (W) & \to \mathcal O_X \bigg(F^{-1}(W) \bigg)\\
f & \to f\circ \bar{F} \\
\end{split}
\] 
where $\bar{F}: F^{-1}(W) \to W$.    For meromorphic functions we have 
\[ 
\begin{split}
F^{*} : \M_Y(W) & \to \M_X \bigg(F^{-1}(W)\bigg) \\
              g & \to g\circ F. \\
\end{split}
\]
Next, we define   isomorphisms between   Riemann surfaces which will lead to the definition of an automorphism. 

%**********************************
\subsection{Isomorphisms of Riemann surfaces}

\begin{defn} An isomorphism between two Riemann surfaces is a holomorphic map $F: X \to Y$ which is bijective, and whose inverse is holomorphic.
\end{defn}

An isomorphism $F: X \to X$ is called an \textbf{automorphism}.  The following lemma is an elementary exercise which shows that the Riemann sphere and the projective line are isomorphic as Riemann surfaces. 

\begin{lem} The Riemann sphere $\C_{\infty}$ is isomorphic to the projective line $\P^1$.
\end{lem}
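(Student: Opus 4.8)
The plan is to exhibit an explicit holomorphic bijection with holomorphic inverse between the Riemann sphere $\C_\infty$ and the projective line $\P^1$, using the standard two-chart atlas on each side. First I would recall the atlas on $\C_\infty = \C \cup \{\infty\}$: the chart $\Phi_1 = \mathrm{id} : \C \to \C$ and the chart $\Phi_2 : \C_\infty \setminus \{0\} \to \C$ given by $\Phi_2(z) = 1/z$ (with $\Phi_2(\infty) = 0$). On $\P^1$, with homogeneous coordinates $[X:Y]$, I would use the two affine charts $\psi_1 : \{Y \neq 0\} \to \C$, $[X:Y] \mapsto X/Y$, and $\psi_2 : \{X \neq 0\} \to \C$, $[X:Y] \mapsto Y/X$.

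Next I would define the map $F : \C_\infty \to \P^1$ by $F(z) = [z : 1]$ for $z \in \C$ and $F(\infty) = [1 : 0]$. It is immediate that $F$ is a bijection, with inverse $G : \P^1 \to \C_\infty$ given by $G([X:Y]) = X/Y$ when $Y \neq 0$ and $G([1:0]) = \infty$. The one substantive step is to check holomorphy of $F$ and $G$ at every point by passing to the chart expressions, as permitted by the definition of holomorphic map between Riemann surfaces given in the excerpt. On the chart $\C \subset \C_\infty$ mapping into $\{Y \neq 0\} \subset \P^1$, the composition $\psi_1 \circ F \circ \Phi_1^{-1}$ is $z \mapsto z$, the identity, hence holomorphic; near $\infty$, using $\Phi_2$ on the source and $\psi_2$ on the target, $\psi_2 \circ F \circ \Phi_2^{-1}$ is the map $w \mapsto w$ (since $\Phi_2^{-1}(w) = 1/w \mapsto [1/w : 1] = [1 : w]$, whose $\psi_2$-coordinate is $w$), again the identity. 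The analogous two computations for $G$ give the identity as well, so $G$ is holomorphic; alternatively one notes $G = F^{-1}$ and invokes that a holomorphic bijection whose chart transition maps are biholomorphic has holomorphic inverse.

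I do not expect any real obstacle here: the only thing to be careful about is bookkeeping at the two "points at infinity," making sure the correct pair of charts is selected so that the composite is visibly the identity (or at worst $w \mapsto w$) rather than something requiring an estimate. Once the four chart computations are recorded, the lemma follows directly from the definition of isomorphism of Riemann surfaces. I would present the argument compactly, doing one chart computation in detail and remarking that the other three are identical in form.
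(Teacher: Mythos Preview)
Your proposal is correct. You model $\C_\infty$ as the one-point compactification $\C\cup\{\infty\}$ with the two standard charts and write down the obvious bijection $z\mapsto[z:1]$, $\infty\mapsto[1:0]$; the chart computations all reduce to the identity, so holomorphy of $F$ and $F^{-1}$ is immediate.

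The paper takes a genuinely different route: it treats $\C_\infty$ as the unit sphere $S^2\subset\mathbb{R}^3$ and writes down the map $\P^1\to S^2$, $[z:w]\mapsto\bigl(2\operatorname{Re}(z\bar w),\,2\operatorname{Im}(z\bar w),\,(|z|^2-|w|^2)/(|z|^2+|w|^2)\bigr)$, which is essentially the inverse of stereographic projection, and asks the reader to verify that this is a biholomorphism. Your approach has the advantage that every chart composite is literally $w\mapsto w$, so there is nothing to compute; the paper's approach makes the classical geometric identification with the round sphere explicit, at the cost of a slightly longer (though still routine) verification that the stereographic formulas are holomorphic in local coordinates. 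Either argument is perfectly acceptable for this elementary lemma.
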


\noindent The proof is elementary and we only sketch it below.   Define the map $\Phi$ as follows: 
\[ 
\begin{split}
\Phi: \P^{\prime} & \to \C_{\infty} \\
  [z,w] & \to \bigg( 2\re(z\bar{w}), 2\img g(z\bar{w}), \frac{|z|^2-|w|^2}{|z|^2+|w|^2}\bigg),\\
\end{split}
\]
and show that it is an isomorphism.

\begin{prop} Let $f: X \to Y$ be a non-constant map between Riemann surfaces. Then for every $y\in Y$, $f^{-1}(Y)$ is a finite non-empty set of $Y$.
\end{prop}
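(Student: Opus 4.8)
The plan is to prove the statement by combining the fact that holomorphic maps between Riemann surfaces are open with the compactness of $X$ (recall that throughout this paper Riemann surfaces are assumed compact). Note first that the statement as written contains a typo: for $y \in Y$ we should consider $f^{-1}(y)$, and we want to show it is a finite nonempty subset of $X$ (not of $Y$). I will prove this corrected version.

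First I would establish that $f$ is surjective. Since $f$ is non-constant and holomorphic, and $X$ is connected, the image $f(X)$ is an open subset of $Y$ by the open mapping theorem for holomorphic maps between Riemann surfaces (which follows locally from the classical open mapping theorem via charts, exactly as in the earlier discussion of holomorphy in local coordinates). On the other hand, $X$ is compact, so $f(X)$ is compact, hence closed in $Y$. Since $Y$ is connected and $f(X)$ is a nonempty subset that is both open and closed, we get $f(X) = Y$. In particular $f^{-1}(y) \neq \emptyset$ for every $y \in Y$.

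Next I would show $f^{-1}(y)$ is finite. Suppose not; then $f^{-1}(y)$ is an infinite subset of the compact space $X$, so it has an accumulation point $p \in X$, and by continuity $f(p) = y$. Choose charts $\Phi_1 : U_1 \to V_1$ around $p$ and $\Phi_2 : U_2 \to V_2$ around $y$ so that the local expression $g = \Phi_2 \circ f \circ \Phi_1^{-1}$ is holomorphic near $\Phi_1(p)$. Since $p$ is an accumulation point of $f^{-1}(y)$, the holomorphic function $g - \Phi_2(y)$ vanishes on a set with an accumulation point in its domain, so by the identity theorem $g \equiv \Phi_2(y)$ on the connected component of $\Phi_1(p)$ in $V_1$. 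This forces $f$ to be locally constant near $p$; since $X$ is connected, $f$ would be constant on all of $X$, contradicting the hypothesis. Hence $f^{-1}(y)$ is finite.

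The main obstacle, or rather the main technical point to be careful about, is the invocation of the open mapping theorem and the identity theorem at the level of Riemann surfaces: both are genuinely local statements that transfer from $\mathbb{C}$ via charts, but one must ensure the domains are connected so that the identity theorem propagates the conclusion globally — this is where the standing assumption that $X$ is connected is essential. Everything else (compactness giving closedness of the image, an infinite subset of a compact space having an accumulation point) is routine point-set topology. I would also remark that the same argument shows $f$ is an \emph{open} map and that $f^{-1}(y)$ being nonempty already implies $f$ is surjective, which will be used in the sequel when defining the degree of $f$.
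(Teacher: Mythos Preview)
Your proof is correct and uses essentially the same idea as the paper for finiteness: both pass to local charts and use that zeros of a non-constant holomorphic function are isolated (you phrase this via the identity theorem and a contradiction, the paper phrases it directly as ``$f^{-1}(y)$ is discrete, and discrete subsets of compact spaces are finite''). Your version is in fact more complete, since you explicitly prove non-emptiness via the open mapping theorem and compactness, whereas the paper's proof simply begins ``Let $x\in f^{-1}(y)$'' without justifying that such an $x$ exists.
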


\proof Fix a local coordinate $z$ around $y\in Y$. Let $x\in f^{-1}(y)$. Fix a local coordinate $w$ around $x\in X$. Then $f$ in terms of local coordinates is $z=g(w)$ (by the Implicit Function Theorem). Also, $g(x)=0$ since $z=0$ at $y$, and $w=0$ at $x$.

So the zeros of a holomorphic function are discrete, and in some neighborhood of $x$, we have $x$ as the only preimage of $y$. Hence, $f^{-1}(y)$ is discrete. But discrete subspaces of compact spaces are finite. This implies that $f^{-1}(y)$ is finite.
\endproof

\begin{lem} Let $f:X \to Y$ be a non-constant map between compact Riemann surfaces. Then, for any two points  $  x, y\in Y$ the fibers $ f^{-1}(x)$ and $ f^{-1}(y)$ have the same cardinality. 
\end{lem}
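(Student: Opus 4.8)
The plan is to show that the function $y \mapsto \#f^{-1}(y)$, counted with multiplicity, is locally constant on $Y$; since $Y$ is connected, this forces it to be globally constant, and a separate argument handles the passage from the multiplicity-weighted count to the naive cardinality away from a finite set. First I would introduce, for each $x \in X$, the \emph{ramification index} (or multiplicity) $\mult_x(f)$: choosing local coordinates $w$ centered at $x$ and $z$ centered at $f(x)$, the map $f$ is expressed as $z = w^m \cdot (\text{unit})$ for a unique integer $m \geq 1$, and we set $\mult_x(f) = m$. One checks this is independent of the chosen charts (same transition-function argument as in the proof that $\ord_p$ is well defined). Then for $y \in Y$ define $d(y) = \sum_{x \in f^{-1}(y)} \mult_x(f)$; by the previous Proposition this is a finite sum, and by non-constancy each fiber is nonempty, so $d(y) \geq 1$.

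Next I would prove $d$ is locally constant. Fix $y_0 \in Y$ with fiber $f^{-1}(y_0) = \{x_1, \dots, x_k\}$ and multiplicities $m_1, \dots, m_k$. Around each $x_i$ choose disjoint coordinate neighborhoods $U_i$ on which $f$ looks like $w \mapsto w^{m_i}$ onto a disc $D$ around $y_0$ (shrinking so all the images agree). The key local fact is that the map $w \mapsto w^{m_i}$ is an $m_i$-to-$1$ covering of the punctured disc: every point of $D \setminus \{y_0\}$ has exactly $m_i$ preimages in $U_i \setminus \{x_i\}$, each with multiplicity $1$. Then I must argue that for $y$ close enough to $y_0$, \emph{all} of $f^{-1}(y)$ lies in $\bigcup_i U_i$. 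This follows from compactness: $X \setminus \bigcup_i U_i$ is compact, its image under $f$ is compact hence closed in $Y$ and misses $y_0$, so a small enough neighborhood of $y_0$ is disjoint from that image. For such $y \neq y_0$ we then get $d(y) = \sum_i m_i = d(y_0)$, and for $y = y_0$ it is trivially equal; this gives local constancy, and connectedness of $Y$ gives a single integer $n = d(y)$ for all $y$.

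Finally, to recover the statement about ordinary cardinality I would observe that $\#f^{-1}(y) = d(y) = n$ whenever $y$ is not a branch point, i.e. whenever every $x \in f^{-1}(y)$ has $\mult_x(f) = 1$; the set of branch points is finite (the points $x$ with $\mult_x(f) > 1$ are where a certain derivative vanishes, hence discrete, hence finite by compactness, and there are finitely many fibers over their images). So off a finite set all fibers have exactly $n$ elements, and in general $\#f^{-1}(y) \leq n$ with equality generically; for the literal statement as phrased — that any two fibers have the same cardinality — I would either (i) state it with multiplicity, which is the honest and standard formulation, or (ii) note that over the cofinite set of non-branch points the naive count is the constant $n$. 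The main obstacle is the compactness step guaranteeing that no preimages "escape" to infinity as $y \to y_0$: this is exactly where compactness of $X$ is essential and is the crux that makes the degree well defined. Everything else is the local normal form $z = w^m$ together with the elementary covering behavior of $w \mapsto w^m$.
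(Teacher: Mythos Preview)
The paper's own ``proof'' of this lemma is simply the word \emph{Exercise}, so there is no argument to compare against directly. Your approach is correct and is in fact exactly the strategy the paper itself adopts a page later for the proposition that $\deg_y(F)$ is constant: show local constancy via the local normal form $w\mapsto w^m$ together with a compactness argument to prevent preimages from escaping, then invoke connectedness of $Y$.

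You are also right to flag the discrepancy between the multiplicity-weighted count and the naive cardinality. As literally phrased, the lemma is false: for $z\mapsto z^2$ on $\P^1$ the fiber over $0$ has one element while the fiber over a nonzero point has two. What is actually true (and what your argument establishes) is that $\sum_{x\in f^{-1}(y)}\mult_x(f)$ is constant, and that the naive cardinality $\#f^{-1}(y)$ is constant on the cofinite complement of the branch locus. Since the paper immediately goes on to define $\deg_y(F)$ with multiplicities and prove its constancy by your method, it seems clear the lemma was meant in that sense; your option (i) is the honest reading.
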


\proof  Exercise
\endproof

%\textbf{Riemann-Sphere}\\
%Let $f:X \to Y$ be a meromorphic map on the Riemann surface $X$. Define

%\begin{equation*}
%F(x):\left\{
%\begin{array}{rl}
%f(x)\in \C \thinspace \text{if x is not a pole} \\
%\infty \thinspace \text{if x is a pole}.
%\end{array}\right.
%\end{equation*}

%$F$ is holomorphic. There is a $1-1$ correspondence:
% \[ \{ \text{meromorphic functions} f on X \} \longrightarrow \{ \text{holomorphic maps} F: X \to \C_{\infty} \text{which are not ident infinity} \} \]

\subsection{Global Properties of Holomorphic Maps}

Next, we see some of the local and global properties of the holomorphic maps. 

\begin{prop}[Local Normal Form] 
Let $F: X \to Y$ be a non-constant holomorphic map defined at $p\in X$. Then, there is a unique integer $m\geq1$ such that: for every chart 
$  \Phi_2: U_2 \to V_2$
on $Y$   such that  $F(p)\in U_2$, there exists a chart  
$\Phi_1 : U_1 \to V_1$
on $X$ centered at $ p$ 
such that
\[ \Phi_2\bigg(F\big(\Phi_1^{-1}(z)\big)\bigg)=z^m.\]
\end{prop}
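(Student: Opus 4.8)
The plan is to reduce everything to the classical local theory of holomorphic functions of one complex variable. First I would fix an arbitrary chart $\Phi_2: U_2 \to V_2$ on $Y$ with $F(p) \in U_2$, and, to normalize, translate so that $\Phi_2(F(p)) = 0$. Then I would pick \emph{any} chart $\Psi: U \to V$ on $X$ centered at $p$ (translating so $\Psi(p) = 0$), and consider the holomorphic function
\[
h(w) = \Phi_2\bigl(F(\Psi^{-1}(w))\bigr),
\]
which is defined and holomorphic near $w = 0$ with $h(0) = 0$. Since $F$ is non-constant and $X$ is connected (a curve in the sense fixed in the Notation), $h$ is not identically zero, so it has a zero of some finite order $m \geq 1$ at $w = 0$; that is, $h(w) = w^m u(w)$ with $u$ holomorphic near $0$ and $u(0) \neq 0$.

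The key step is then to absorb the unit $u$ by a change of coordinate on $X$. Because $u(0) \neq 0$, on a small enough neighborhood of $0$ there is a holomorphic $m$-th root: choose a branch so that $v(w) := w\, u(w)^{1/m}$ is holomorphic near $0$, satisfies $v(0) = 0$, and $v'(0) = u(0)^{1/m} \neq 0$. By the inverse function theorem $v$ is a biholomorphism from a neighborhood of $0$ onto a neighborhood of $0$, so $w \mapsto v(w)$ can be taken as a new local coordinate on $X$ centered at $p$; concretely, replace the chart $\Psi$ by $\Phi_1 := v \circ \Psi$. In this new coordinate $z = v(w)$ we get $\Phi_2(F(\Phi_1^{-1}(z))) = h(w) = w^m u(w) = (w\,u(w)^{1/m})^m = z^m$, which is exactly the desired normal form.

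Finally I would address uniqueness of $m$. If two charts $\Phi_1, \Phi_1'$ on $X$ centered at $p$ gave $\Phi_2 \circ F \circ \Phi_1^{-1} = z^m$ and $\Phi_2 \circ F \circ (\Phi_1')^{-1} = z^{m'}$, then composing with the transition map $T = \Phi_1' \circ \Phi_1^{-1}$ (a biholomorphism fixing $0$, so $\ord_0 T = 1$) and using the order-of-composition bookkeeping — concretely that $\ord_p(g \circ T) = \ord_p(g)\cdot \ord_p(T)$ from the behavior of Laurent series under an invertible coordinate change established in the proof that $\ord_p f$ is well defined — forces $m = m'$. Independence from the choice of $\Phi_2$ is similar: a different chart on $Y$ centered at $F(p)$ differs by a biholomorphism of order $1$, which does not change the order of vanishing.

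\textbf{Main obstacle.} The only genuinely delicate point is the holomorphic $m$-th root extraction: one must check that $u(0) \neq 0$ guarantees a well-defined holomorphic branch of $u(w)^{1/m}$ on a simply connected neighborhood of $0$, and that the resulting $v$ has nonvanishing derivative at $0$ so the inverse function theorem applies and $\Phi_1 = v \circ \Psi$ is an honest chart. Everything else is routine translation between charts and the order arithmetic already available from the earlier lemmas.
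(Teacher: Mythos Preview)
Your argument is correct and is precisely the standard proof: factor the local expression as $w^m u(w)$ with $u(0)\neq 0$, extract a holomorphic $m$-th root of $u$ to build the new chart, and read off uniqueness from the invariance of the order of vanishing under biholomorphic coordinate changes. The paper does not give its own proof here but simply cites \cite[Prop.~4.1]{Miranda}, and what you have written is essentially Miranda's argument, so there is nothing further to compare.
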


\proof See \cite[Prop. 4.1, pg. 44]{Miranda}
%By Taylor series \[ T(w)=\Phi_2 \bigg(F\big(\Phi^{-1}(w)\big)\bigg)=\sum_{i=m}^{\infty} c_{i} w^i \], such that $m\geq1$, since $T(0)=0$. (Take $z=0,w=0$)\\
%So , $T(w)=w^m S(w)$, where $S(w)$ is holomorphic and $S(0)\neq )$, which implies that $\exists R(w)$ holomorphic near zero, such that $S(w)=R(w)^m$.  Hence, $T(w)=\big( w R(w)\big)^{m}$.\\
%Let $\eta(w)= w R(w), \eta^{\prime}(0)=0$. Then, $\eta$ is invertible, and holomorphic (by Implicit function Thm). So $\Phi_1=\eta\circ \Psi$.\\
%Recall $ \Psi: U \to V$ such that $p\in U$.\\
%Complete....\\
\endproof

\begin{defn} 
The \textbf{multiplicity of $F$ at $p$}, denoted by $\mult_p F$ is the unique integer $m$ such that there are local coordinate near $p$ and $F(p)$ having the form $z\to z^m$.
\end{defn}

Notice that from the definition we have that for any $F : X \to Y$, the multiplicity of $p$ is  $\mult_p F \geq 1$. Take a local coordinate $z$ near $p$ and $w$ near $F(p)$ (i.e., $p$ corresponds to $z_0$ and $F(p)$ to $w$). Then, the map $F$ can be written as $w=h(z)$, where $h$ is holomorphic. Then, we have the following result. 

\begin{lem} The multiplicity $\mult_p F$ of $F$ at $p$ is 1 plus the order of the vanishing derivative $h^{\prime}(z_0)$ of $h$ at $z_0$. 
%=$\{order of the vanishing derivative $h^{\prime}(z_0)$ of $h$ at $z_0$, where $w=h(z)+1$ is holomorphic. 
In other words,  
\[ \mult_p F= 1+ \ord_{z_0} \left( \frac{d h}{dz} \right) \]
\end{lem}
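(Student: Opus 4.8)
The plan is to unwind both sides of the claimed identity using the Local Normal Form proposition and the definition of multiplicity, and then compare them via the chain rule. First I would fix the local coordinate $z$ centered at $p$ (so $z_0$ corresponds to $p$) and the local coordinate $w$ near $F(p)$, so that $F$ is expressed as $w = h(z)$ with $h$ holomorphic and $h(z_0) = 0$ after recentering. By the Local Normal Form, there is a unique integer $m = \mult_p F \geq 1$ and a change of coordinate $z = \psi(t)$, with $\psi$ a biholomorphism near the relevant points and $\psi'$ nonvanishing, such that in the coordinate $t$ the map becomes $t \mapsto t^m$. The goal is to show that this same $m$ equals $1 + \ord_{z_0}(h')$.

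The key computation is the chain rule. Writing $h(\psi(t)) = t^m$ (after absorbing the target coordinate change, which is also a biholomorphism and hence does not affect orders of vanishing), I would differentiate: $h'(\psi(t)) \cdot \psi'(t) = m\, t^{m-1}$. Since $\psi'$ is holomorphic and nonvanishing near $t=0$, it is a unit in the local ring, so $\ord_{t=0}\big(h'(\psi(t))\big) = \ord_{t=0}(m\,t^{m-1}) = m-1$. Then I would invoke Lemma~3 (invariance of $\ord_p$ under change of local coordinate): composing $h'$ with the biholomorphic substitution $z = \psi(t)$ does not change the order, so $\ord_{z_0}(h') = \ord_{t=0}\big(h'\circ\psi\big) = m-1$. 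Rearranging gives $m = 1 + \ord_{z_0}(h')$, which is exactly $\mult_p F = 1 + \ord_{z_0}\!\left(\frac{dh}{dz}\right)$.

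I would also note the edge case $m=1$, where $h'(z_0) \neq 0$ and $\ord_{z_0}(h') = 0$, so the formula reads $1 = 1 + 0$; this is consistent and corresponds to $F$ being a local biholomorphism at $p$. Conversely, the formula shows the integer $1 + \ord_{z_0}(h')$ is independent of the chosen charts, which re-derives the uniqueness of $m$ in the Local Normal Form and confirms the two definitions of multiplicity agree.

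The main obstacle is bookkeeping about which coordinate changes happen on the source versus the target and making sure the target-side biholomorphism genuinely drops out. The cleanest way around this is to observe that if $w = h(z)$ and $\tilde{w} = \phi(w)$ is a target coordinate change with $\phi$ biholomorphic and $\phi(0)=0$, then $\tilde{h} := \phi \circ h$ satisfies $\tilde{h}'(z) = \phi'(h(z)) h'(z)$, and $\phi'(h(z))$ is a nonvanishing holomorphic function near $z_0$, hence a unit; so $\ord_{z_0}(\tilde{h}') = \ord_{z_0}(h')$. Thus the quantity $1 + \ord_{z_0}(h')$ is insensitive to the target chart, and by Lemma~3 it is insensitive to the source chart as well, so it is intrinsic and must equal $m$. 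Everything else is a one-line chain-rule calculation.
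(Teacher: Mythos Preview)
Your argument is correct. The paper itself does not supply a proof here but simply cites \cite[Lemma 4.4]{Miranda}, so there is no in-paper argument to compare against; your chain-rule plus invariance-of-order approach is a standard and complete justification. Note that the remark immediately following the lemma in the paper (the power-series description $h(z)=h(z_0)+\sum_{i\ge m} c_i(z-z_0)^i$ with $c_m\neq 0$ implying $\mult_p F=m$) suggests the slightly more direct route: read off $m$ as the lowest nonconstant exponent in the Taylor expansion of $h$, observe $h'(z)=m c_m(z-z_0)^{m-1}+\cdots$, and identify this $m$ with the normal-form exponent by extracting an $m$th root of the unit $u(z)=(h(z)-h(z_0))/(z-z_0)^m$. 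Your approach instead proves chart-independence of $1+\ord_{z_0}(h')$ first and then evaluates in the normal-form chart; both are equally valid and essentially equivalent.
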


\proof See \cite[Lemma 4.4]{Miranda} \endproof

\noindent If $h(z)$ is given as a power series around $z_0$ as 
\[ h(z) = h(z_0) + \sum_{i=m}^\infty c_i (z-z_0)^i, \]
with $m \geq 1$ and $c_m\neq 0$, then $\mult_p F = m$.

Since the points where the multiplicity $m \geq 2$ correspond to the zeroes of a holomorphic function, then there are finitely many of them. Hence, the following definition. 

 Let $F: X \to Y$ be a constant holomorphic map. A point $p\in X$ is called a \textbf{ramification point} for $F$ if $\mult_p F\geq 2.$
 A point $y\in Y$ is called a \textbf{branch point} for $F$ if it is the image of a ramification point for $F$.
 
The following lemma brings the above results in terms of the algebraic curves. We will skip the proof.

\begin{lem} 
i) Let $X$ be a smooth affine curve defined by $f(x,y)=0$. Define 
\[ 
\begin{split}
\pi_x:  X & \to \C \\
  (x,y) & \to x. \\
\end{split}
\]
Then,  $\pi_x$ is ramified at $p\in X$  if and only if  $\frac{\partial f}{\partial y}  (p) =0$.

ii) Let $X$ be a smooth projective plane curve defined by a homogenous polynomial $F(x, y, z)=0$ and $\phi: X \to \P^1$ the map  $[x, : y :z] \to [x, z]$.
Then, $\phi$ is ramified at $\p \in X$ if and only if $\frac {\partial F} {\partial y} (p) = 0$. 
\end{lem}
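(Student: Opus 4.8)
The plan is to reduce both statements to the previous lemma characterizing ramification via the derivative of the local expression of the map, using the Implicit Function Theorem to produce that local expression explicitly. For part (i), fix a point $p = (x_0, y_0) \in X$ and consider the projection $\pi_x$. I would split into two cases according to whether $\frac{\partial f}{\partial y}(p) = 0$ or not. If $\frac{\partial f}{\partial y}(p) \neq 0$, then by the Implicit Function Theorem the equation $f(x,y)=0$ can be solved locally as $y = \psi(x)$ with $\psi$ holomorphic, so $x$ itself serves as a local coordinate on $X$ near $p$; in this coordinate $\pi_x$ is literally the identity $x \mapsto x$, hence $\mult_p \pi_x = 1$ and $p$ is not a ramification point. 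Conversely, if $\frac{\partial f}{\partial y}(p) = 0$, then since $X$ is smooth we must have $\frac{\partial f}{\partial x}(p) \neq 0$, so the Implicit Function Theorem instead gives $x = \varphi(y)$ locally, making $y$ a local coordinate near $p$; writing $\pi_x$ in terms of $y$ and the standard coordinate on $\C$ gives $\pi_x : y \mapsto \varphi(y)$, and by the lemma relating multiplicity to the order of vanishing of the derivative, $\mult_p \pi_x = 1 + \ord_{y_0}(\varphi')$. The point is then that $\varphi'(y_0) = 0$: differentiating $f(\varphi(y), y) = 0$ gives $\frac{\partial f}{\partial x}\varphi' + \frac{\partial f}{\partial y} = 0$, and at $p$ the term $\frac{\partial f}{\partial y}(p)$ vanishes while $\frac{\partial f}{\partial x}(p) \neq 0$, forcing $\varphi'(y_0) = 0$, so $\mult_p \pi_x \geq 2$ and $p$ is a ramification point.

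For part (ii), I would dehomogenize to reduce to the affine statement. Given the smooth projective plane curve $F(x,y,z) = 0$ and the map $\phi : [x:y:z] \mapsto [x:z]$, pick a point $\p \in X$ and choose an affine chart of $\P^2$ containing it together with a compatible affine chart of $\P^1$ in which $\phi$ becomes one of the coordinate projections treated in part (i); for a point with $z \neq 0$ we set $z = 1$, obtaining the affine curve $f(x,y) := F(x,y,1) = 0$ and the projection $(x,y) \mapsto x$, which is exactly the situation of (i). One then checks, via Euler's relation $x F_x + y F_y + z F_z = zF$ (which on $X$ reduces to $x F_x + y F_y + z F_z = 0$) or simply by the chain rule $f_y(x,y) = F_y(x,y,1)$, that the condition $\frac{\partial f}{\partial y}(p) = 0$ translates precisely to $\frac{\partial F}{\partial y}(\p) = 0$ as a statement about the homogeneous polynomial (the vanishing locus of $F_y$ being well-defined as a projective condition since $F_y$ is homogeneous). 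The case of points lying on the line $z = 0$ is handled by the symmetric chart $x = 1$ (points with $x = z = 0$, i.e. $[0:1:0]$, need to be checked separately but smoothness of $X$ plus the explicit form of $\phi$ there makes this routine), and in every case the ramification criterion matches $F_y(\p) = 0$.

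The main obstacle I anticipate is not any single hard computation but rather the bookkeeping around charts and homogeneity in part (ii): making sure that "$\frac{\partial F}{\partial y}(\p) = 0$" is a well-posed condition on the projective curve (it is, because $F_y$ is homogeneous of degree $\deg F - 1$, so its zero locus in $\P^2$ is well-defined), and checking that the various affine charts used to cover $X$ all give mutually consistent answers and together cover the points at infinity. A secondary subtlety is confirming in part (i) that smoothness of $X$ is genuinely being used — it is exactly what guarantees that $f_x$ and $f_y$ do not vanish simultaneously at $p$, which is what lets the Implicit Function Theorem apply in one variable or the other. Once these points are pinned down, the proof is a direct assembly of the local-normal-form lemma, the derivative-order lemma, and the Implicit Function Theorem, with no estimates or limiting arguments required.
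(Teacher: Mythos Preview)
The paper does not actually give a proof of this lemma; it states explicitly that the proof is skipped. So there is no argument in the paper to compare your proposal against.

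That said, your approach is the standard and correct one: reduce to the multiplicity--derivative lemma via the Implicit Function Theorem, using smoothness of $X$ to guarantee that at least one partial derivative is nonzero so that one of $x$ or $y$ serves as a local coordinate. The case split in part~(i) is clean, and the dehomogenization in part~(ii) is the right move. One small slip: Euler's relation for a homogeneous polynomial of degree $d$ reads $xF_x + yF_y + zF_z = d\,F$, not $zF$; but you never actually use this, since the direct observation $\partial_y\bigl(F(x,y,1)\bigr) = F_y(x,y,1)$ already gives what you need. The only genuine loose end is the point $[0:1:0]$, where $\phi$ is \emph{a priori} undefined (since $[x:z]=[0:0]$ is not a point of $\P^1$); if this point lies on $X$ one must extend $\phi$ across it using smoothness before discussing ramification there, but as you say this is routine.
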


%***********************************************************
%\subsection{Degree of a holomorphic map between Riemann surfaces}

Next we will see the concept of the degree of a map between Riemann surfaces which will prepare us for the Riemann-Hurwitz formula. 

\begin{defn}
Let $ F: X \to Y $ be a non-constant holomorphic map between compact Riemann surfaces. For each $ y \in Y $, define 
\[ \deg_y (F)=\sum_{p\in F^{-1}(y)}\mult_p F. \] 
\end{defn}

\begin{prop}
Then, $ \deg_y (F) $ is constant independently of $y$. 
\end{prop}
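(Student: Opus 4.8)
The plan is to show that the locally constant function $y \mapsto \deg_y(F)$ on the connected space $Y$ is in fact constant, by proving local constancy and invoking connectedness. First I would fix an arbitrary $y_0 \in Y$ and let $F^{-1}(y_0) = \{p_1, \dots, p_k\}$, which is a finite set by the earlier proposition on fibers of non-constant maps between compact Riemann surfaces; write $m_i = \mult_{p_i} F$, so that $\deg_{y_0}(F) = \sum_{i=1}^k m_i$. The goal is to produce a neighborhood $W$ of $y_0$ in $Y$ such that $\deg_y(F) = \sum m_i$ for every $y \in W$.

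The key step uses the Local Normal Form proposition. Around each $p_i$ I would choose a chart $\Phi_1^{(i)} : U_i \to V_i$ centered at $p_i$ and a chart $\Psi : W_0 \to D$ centered at $y_0$ (the same target chart for all $i$, shrinking $W_0$ if necessary) so that in these coordinates $F$ takes the form $z \mapsto z^{m_i}$ on $U_i$. I would shrink the $U_i$ so that they are pairwise disjoint (possible since the $p_i$ are distinct and $Y$, hence $X$, is Hausdorff). Now the map $z \mapsto z^{m_i}$ from a small disc to a small disc is exactly $m_i$-to-one over every nonzero value and $1$-to-one (with multiplicity $m_i$) over $0$; concretely, for a punctured disc around $0$ in the target, there are exactly $m_i$ preimages in $U_i$, each with multiplicity $1$. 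Thus over any point $y \neq y_0$ sufficiently close to $y_0$, the total multiplicity of preimages lying inside $\bigcup_i U_i$ is exactly $\sum_i m_i$.

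The remaining point — and the one requiring the most care — is to rule out "extra" preimages of nearby $y$ that escape $\bigcup_i U_i$. For this I would use compactness of $X$: the set $K = X \setminus \bigcup_i U_i$ is closed in the compact space $X$, hence compact, and $F(K)$ is therefore compact in $Y$ and does not contain $y_0$ (since every preimage of $y_0$ lies in some $U_i$). Hence $Y \setminus F(K)$ is an open neighborhood of $y_0$; intersecting it with $\Psi^{-1}$ of a small punctured-or-full disc gives the desired $W$. For $y \in W$ every preimage of $y$ lies in $\bigcup_i U_i$, so by the local computation $\deg_y(F) = \sum_i m_i = \deg_{y_0}(F)$. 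This shows the function $y \mapsto \deg_y(F)$ is locally constant.

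Finally, a locally constant integer-valued function on a connected topological space is constant, and $Y$ is connected (a Riemann surface is by definition connected). Therefore $\deg_y(F)$ does not depend on $y$, which is the assertion. The only genuinely delicate part is the bookkeeping in the previous paragraph: making the charts disjoint, ensuring a uniform target disc, and the compactness argument that confines all nearby preimages to the chosen charts; once that is set up, the count $\sum m_i$ follows immediately from the normal form $z \mapsto z^{m_i}$.
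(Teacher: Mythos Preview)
Your proposal is correct and follows exactly the approach the paper indicates: show that $y \mapsto \deg_y(F)$ is locally constant and then use connectedness of $Y$. The paper itself only states this idea and explicitly skips the details, so your write-up in fact supplies the omitted argument (Local Normal Form for the local count, compactness of $X$ to confine nearby preimages) rather than diverging from it.
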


\proof
The idea of the proof is to show that the function $y \to \deg_y (F)$ is locally constant. Since $Y$ is connected, then every locally constant function is constant.  We skip the details. \endproof

The above result motivates the following definition. Let $F: X \to Y$ be a non-constant holomorphic map between compact Riemann surfaces. The \textbf{degree of $F$} is defined as 
\[ \deg (F) = \deg_y (F),\]
for any $y\in Y$. 

\begin{cor} A holomorphic map between compact Riemann surfaces is an isomorphism if and only if it has degree equal to 1.
\end{cor}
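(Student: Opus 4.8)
The plan is to prove both implications directly from the definition of the degree together with the Local Normal Form proposition.

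I would first dispose of the easy direction. Suppose $F:X\to Y$ is an isomorphism. A Riemann surface has infinitely many points, so $F$ is non-constant and $\deg(F)$ is defined. Since $F$ is bijective, each fiber $F^{-1}(y)$ is a single point $p$; and since both $F$ and $F^{-1}$ are holomorphic, in suitable local coordinates centered at $p$ and $F(p)$ the map $F$ is given by $z\mapsto z$, so $\mult_p F=1$ (via the Local Normal Form, or equivalently by the lemma expressing $\mult_p F$ as $1+\ord_{z_0}(dh/dz)$, the derivative being identically $1$ here). Hence for every $y\in Y$ we get $\deg_y(F)=\sum_{p\in F^{-1}(y)}\mult_p F=1$, that is $\deg(F)=1$.

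For the converse, assume $\deg(F)=1$; in particular $F$ is non-constant. Fix $y\in Y$. By the earlier proposition the fiber $F^{-1}(y)$ is finite and non-empty, and $\mult_p F\ge 1$ for every $p$ in it. Since $\sum_{p\in F^{-1}(y)}\mult_p F=\deg_y(F)=1$, the fiber must consist of exactly one point $p$, and that point satisfies $\mult_p F=1$. As $y$ was arbitrary, $F$ is a bijection. It then remains to check that $F^{-1}$ is holomorphic: at each $p\in X$, because $\mult_p F=1$, the Local Normal Form gives charts $\Phi_1$ centered at $p$ and $\Phi_2$ near $F(p)$ with $\Phi_2\circ F\circ \Phi_1^{-1}(z)=z$; thus locally $F=\Phi_2^{-1}\circ \Phi_1$ and $F^{-1}=\Phi_1^{-1}\circ \Phi_2$, compositions of holomorphic maps, so $F^{-1}$ is holomorphic near every point of $Y$. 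Therefore $F$ is an isomorphism.

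The main obstacle — indeed essentially the only non-formal point — is this last step: one must invoke the Local Normal Form to upgrade the purely numerical information ``$\mult_p F=1$ at every $p$'' into the geometric statement that $F$ is a local biholomorphism, and hence that the set-theoretic inverse is holomorphic. Bijectivity of $F$ plus holomorphicity of $F$ alone would not by themselves make $F^{-1}$ holomorphic without this local model.
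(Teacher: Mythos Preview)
Your proof is correct and follows the same route as the paper, only with considerably more detail: the paper's argument is the one-liner ``degree $1$ means the map is injective; any non-constant holomorphic map is surjective, so it is an isomorphism,'' whereas you also treat the forward implication and explicitly verify holomorphicity of $F^{-1}$ via the Local Normal Form. One small overstatement in your closing remark: for holomorphic maps between Riemann surfaces, bijectivity plus holomorphicity \emph{does} force the inverse to be holomorphic (an injective holomorphic map of one complex variable has nonvanishing derivative, hence is locally biholomorphic), so the ``obstacle'' you flag is not a genuine one---but your Local Normal Form argument is of course a perfectly valid way to see this.
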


\proof Degree equal to 1 means that the map is injective. But any non-constant holomorphic map is surjective, so it is an isomorphism.
\endproof

Let $F : X \to Y$ be a non-constant holomorphic map between compact Riemann surfaces. If we delete the branch points in $Y$ then we obtain a $\deg F  \to 1$ map, which is a \textbf{covering} in the topological sense.   Because of this, the  initial map $F : X \to Y$ is called a \textbf{branched covering}. 

The following is true also for Riemann surfaces, as expected.

\begin{prop} Let $f$ be a meromorphic function on a compact Riemann surface $X$. Then, $\sum_p \ord_p f =0$.
\end{prop}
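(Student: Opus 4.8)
The plan is to use the map $f$ itself as a branched covering $f : X \to \mathbb{C}_\infty$ and pull back the corresponding statement for the Riemann sphere, which is already available as a corollary above. First I would dispose of the trivial case: if $f$ is identically zero the statement is vacuous (or one excludes it by convention), so assume $f$ is non-constant. Then $f$ defines a non-constant holomorphic map $f : X \to \mathbb{C}_\infty$ between compact Riemann surfaces, and it has a well-defined degree $n = \deg(f)$ by the Proposition on constancy of $\deg_y(f)$.

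The key computation is local. Fix a point $q \in \mathbb{C}_\infty$ and a point $p \in f^{-1}(q)$ with multiplicity $m = \mathrm{mult}_p f$. By the Local Normal Form, in suitable centered coordinates $z$ near $p$ and $w$ near $q$ the map is $w = z^m$. I would then compare $\mathrm{ord}_p(f)$ with $\mathrm{ord}_q$ of the relevant local function on $\mathbb{C}_\infty$: since $\mathrm{ord}_p$ is independent of the local coordinate (the Lemma proved earlier), one gets $\mathrm{ord}_p(f) = m \cdot \mathrm{ord}_q(\mathrm{id})$ at a point where $f(p) \neq 0, \infty$ — more precisely, at a zero $q = 0$ of the coordinate function one finds $\mathrm{ord}_p(f) = m$, at the pole $q = \infty$ one finds $\mathrm{ord}_p(f) = -m$, and elsewhere $\mathrm{ord}_p(f) = 0$. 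Summing over all $p \in f^{-1}(q)$ gives $\sum_{p \in f^{-1}(q)} \mathrm{ord}_p(f) = n$ if $q = 0$, $= -n$ if $q = \infty$, and $= 0$ otherwise.

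Now I would sum over all $p \in X$. Since $f$ has only finitely many zeros and poles (their preimages lie over the finitely many zeros/poles of $f$ on $\mathbb{C}_\infty$, and each fiber is finite by the Proposition on finiteness of fibers), the sum $\sum_{p \in X} \mathrm{ord}_p(f)$ is finite and equals
\[
\sum_{p \in X} \mathrm{ord}_p(f) = \sum_{p \in f^{-1}(0)} \mathrm{ord}_p(f) + \sum_{p \in f^{-1}(\infty)} \mathrm{ord}_p(f) = n + (-n) = 0,
\]
which is the claim. Alternatively, and perhaps more cleanly, one can invoke the corollary that $\sum_q \mathrm{ord}_q(g) = 0$ for any meromorphic $g$ on $\mathbb{C}_\infty$ together with the identity $\mathrm{ord}_p(f) = \mathrm{mult}_p(f) \cdot \mathrm{ord}_{f(p)}(w)$ where $w$ is the standard coordinate, and regroup using $\deg_q(f) = n$ for every $q$.

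The main obstacle is the local multiplicity bookkeeping: one must carefully justify that passing through the Local Normal Form $w = z^m$ multiplies orders by exactly $m$, and handle the chart at $\infty$ on $\mathbb{C}_\infty$ (where the local coordinate is $1/w$) so the signs come out right. Everything else — finiteness of the set of zeros and poles, constancy of the degree, coordinate-independence of $\mathrm{ord}_p$ — has already been established above and can be cited directly.
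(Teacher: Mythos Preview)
Your proposal is correct and is exactly the standard argument: view $f$ as a degree-$n$ branched cover $X\to\C_\infty$, identify $\ord_p(f)$ with $\pm\mult_p(f)$ over $0$ and $\infty$ respectively (and $0$ elsewhere), and use constancy of the degree to get $n-n=0$. The paper does not supply its own proof here but simply cites \cite[Prop.~4.12]{Miranda}, and Miranda's argument is precisely the one you outline; the only cosmetic gap is that you should also dispose of the nonzero-constant case (trivial, all orders vanish) before assuming $f$ is non-constant.
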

The proof can be found in \cite[Prop. 4.12]{Miranda}, among many other places. 
%**********************************************************************************************
%\subsection{Covers of the projective line}

\subsection{Triangulations and the Euler's number}

Let $S$ be a Riemann surface. A \textbf{triangulation} on $S$ is a decomposition of $S$ into closed subsets, each holomorphic to a triangle, such that any two triangle are either  disjoint,  meet only at a single vertex,  have only an edge in common. 
 
Let a triangulation be given with $v$ vertices, $e$ edges, and $t$ triangles. 
The Euler number is defined as $e(S)= v -e + t$. 

The main result of the Euler number is that it does not depend on the particular triangularization.  Moreover, for a given genus $g$ surface, we can explicitly determine what this number is. We skip the details of the proof which can be found in most undergraduate texts on complex analysis and Riemann surfaces. 

\begin{prop}\label{euler} 
For a compact orientable Riemann surface of genus $g$, the Euler number is $2-2g$.
\end{prop}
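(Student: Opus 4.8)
The plan is to prove Proposition~\ref{euler} by first establishing that the Euler number is a topological invariant (independent of the chosen triangulation) and then exhibiting one convenient triangulation of the genus-$g$ surface whose vertex, edge, and face counts are easy to tabulate. For the invariance step I would appeal to the standard fact, quoted from any text on Riemann surfaces, that any two triangulations of a compact surface admit a common refinement, together with the observation that a single ``elementary subdivision'' — adding one vertex in the interior of a triangle and connecting it to the three vertices, or adding one vertex on an edge — leaves $v-e+t$ unchanged (in the first case $\Delta v = 1$, $\Delta e = 3$, $\Delta t = 2$; in the second $\Delta v = 1$, $\Delta e = 3$, $\Delta t = 2$ as well, counting the two triangles sharing the edge). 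Since any refinement is reached by finitely many such moves, $e(S)$ depends only on $S$.

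Next I would compute $e(S)$ for a specific model of the genus-$g$ surface. The cleanest route is the $4g$-gon with edge identification word $a_1 b_1 a_1^{-1} b_1^{-1} \cdots a_g b_g a_g^{-1} b_g^{-1}$: triangulate the polygon by coning from its center, so that before identification one has $1$ interior vertex plus $4g$ boundary vertices, $4g$ boundary edges plus $4g$ radial edges, and $4g$ triangles. After performing the identifications, all $4g$ boundary vertices are glued to a single point, the $4g$ boundary edges are glued in pairs to give $2g$ edges, the radial edges and triangles are unaffected, so $v = 1 + 1 = 2$, $e = 2g + 4g = 6g$, $t = 4g$, giving $e(S) = 2 - 6g + 4g = 2 - 2g$. (One should check this polygonal decomposition can be refined to an honest triangulation in the sense defined above — triangles meeting along full edges or single vertices — which is routine: a couple of barycentric subdivisions suffice, and by the invariance step the number is unchanged.) Alternatively, and perhaps more in the spirit of these notes, one can induct on $g$: genus $0$ is the sphere, triangulated as a tetrahedron with $e = 4 - 6 + 4 = 2$; and forming a connected sum with a torus (cutting out a triangle from each summand and gluing along the boundary) changes the count by $\Delta e = -2$, reproducing $e(S_g) = 2 - 2g$.

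The main obstacle is really the invariance statement rather than the genus-$g$ computation: showing that two arbitrary triangulations have a common refinement requires either a compactness/straightening argument or an appeal to the simplicial approximation theorem, and it is the one place where a genuinely nontrivial topological input is needed. Since the excerpt explicitly says ``we skip the details of the proof which can be found in most undergraduate texts,'' I would state the common-refinement fact as a lemma with a reference (e.g. \cite{fk} or \cite{Miranda}) and spend the written proof on the bookkeeping for the $4g$-gon, which is elementary and self-contained once invariance is granted.
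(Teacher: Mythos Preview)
Your proposal is mathematically sound, but there is nothing to compare it against: the paper does not prove this proposition at all. Immediately before the statement the authors write that ``we skip the details of the proof which can be found in most undergraduate texts on complex analysis and Riemann surfaces,'' and indeed no argument follows. So your write-up already goes well beyond what the paper supplies.

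For the record, the bookkeeping in your $4g$-gon computation is correct (and your elementary-subdivision check that $v-e+t$ is unchanged under refinement is fine), so either route you sketch---the explicit $4g$-gon model or the connected-sum induction starting from the tetrahedron---would furnish a legitimate proof. Your instinct to isolate the common-refinement step as a cited lemma is exactly right: that is the only nontrivial topological input, and it is precisely the part the paper declines to discuss.
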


%**************************************************************
%\subsection{Hurwitz formula}

Next we are ready to state and prove the Riemann-Hurwitz formula. 

\begin{thm}[Riemann-Hurwitz]
Let $F: X \to Y$ be a non-constant holomorphic map between compact Riemann surfaces, where the genus of $X$ 
(resp. the genus of $Y$) is $g(X)=g_X$ (resp.   $g(Y) = g_Y$). Then, 
\[ 2 (g_X-1)= 2 \deg F \left(g_Y-1 \right)+\sum_{p\in X} \left(\mult_p F-1\right). \]
\end{thm}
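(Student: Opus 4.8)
The plan is to compute the Euler number of $X$ in two ways. First, I would fix a triangulation of $Y$ with the property that every branch point of $F$ is a vertex; this is possible because there are only finitely many branch points (each comes from a ramification point, and ramification points are zeros of a holomorphic derivative, hence finite in number on the compact surface $X$). Suppose this triangulation has $v$ vertices, $e$ edges, and $t$ triangles, so by Proposition~\ref{euler} we have $v - e + t = 2 - 2g_Y$.

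Next I would pull the triangulation back along $F$ to obtain a triangulation of $X$. Away from the branch points the map $F$ is a genuine covering of degree $\deg F$ (as noted just before the statement, deleting the branch points yields a $\deg F \to 1$ topological covering), so each open edge and each open triangle of $Y$ has exactly $\deg F$ preimages in $X$. Hence the pulled-back triangulation of $X$ has $\deg F \cdot e$ edges and $\deg F \cdot t$ triangles. The vertices require care: over a non-branch vertex $y$ there are exactly $\deg F$ preimages, but over a branch vertex $y$ the fiber $F^{-1}(y)$ has only $\sum_{p \in F^{-1}(y)} 1$ points, which is $\deg F - \sum_{p \in F^{-1}(y)} (\mult_p F - 1)$ by the definition of $\deg_y(F)$ and the fact that $\deg_y(F) = \deg F$. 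Summing over all vertices, the number of vertices upstairs is
\[ v' = \deg F \cdot v - \sum_{p \in X} (\mult_p F - 1), \]
where the correction sum ranges effectively over ramification points (the other terms vanish).

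Now I would apply Proposition~\ref{euler} again to $X$: its Euler number is $2 - 2g_X$, and it equals $v' - \deg F \cdot e + \deg F \cdot t$. Substituting the expression for $v'$ gives
\[ 2 - 2g_X = \deg F\,(v - e + t) - \sum_{p \in X}(\mult_p F - 1) = \deg F\,(2 - 2g_Y) - \sum_{p \in X}(\mult_p F - 1). \]
Rearranging yields $2(g_X - 1) = 2\deg F\,(g_Y - 1) + \sum_{p \in X}(\mult_p F - 1)$, as claimed.

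The main obstacle is the vertex count over branch points, i.e.\ making rigorous that the pullback of a sufficiently fine triangulation is again a triangulation of $X$ and that the only defect from naive multiplicativity is concentrated at ramification points with the stated correction. This rests on the Local Normal Form (the proposition stated above), which says $F$ looks like $z \mapsto z^m$ near a ramification point; one uses it to check that a small triangle-neighborhood of a branch vertex pulls back, near each ramification point $p$ with $m = \mult_p F$, to a fan of $m$ triangles meeting at the single vertex $p$, so $m$ triangles collapse to one vertex instead of contributing $m$ vertices. The bookkeeping that this is consistent globally and yields a legitimate triangulation is the part I would treat most carefully; the rest is the Euler-characteristic arithmetic above.
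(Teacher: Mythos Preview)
Your proof is correct and follows essentially the same route as the paper: choose a triangulation of $Y$ with all branch points among the vertices, lift it to $X$, use the degree to count edges and triangles and the fiber formula $|F^{-1}(y)|=\deg F-\sum_{p\in F^{-1}(y)}(\mult_p F-1)$ to count vertices, and then compare Euler numbers. Your added remarks on using the Local Normal Form to justify that the pullback is a genuine triangulation go slightly beyond what the paper spells out, but the argument is otherwise the same.
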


\proof First, it is worth noticing that since $X$ is compact, there is a finite set of ramification points. Therefore, $\sum_{p\in X}  \left(\mult_p F-1\right)$ is a finite sum. 

Take a triangulation on $Y$ such that each branch point is a vertex. Assume that there are    $v$ vertices, $e$ edges, and $t$ triangles. 
Every triangle in $Y$ will lift to a triangle in $X$. Let  $v^\prime$ vertices, $e^\prime$ edges, and $t^\prime$ triangles be the corresponding triangulation in $X$. 

Every ramification point is a vertex in $X$. A triangle lifts to $\deg F$ triangles in $X$. So $t^{\prime} = \deg F\cdot t$. Also, $e^{\prime} = \deg F\cdot e$.

Next we determine the number of vertices. 
Let $B$ be the set of branch points in $Y$ and $B^\prime$ the set of ramification points in $X$. Let  $q \in B$, so $q$ is a vertex in $Y$ and 
\[ \left| F^{-1}(q) \right|= \sum_{q\in F^{-1}(q)}1= \deg F- \sum_{q\in F^{-1}(q)} \left(\mult_p F-1\right). \]
Hence,
\[ 
\begin{split}
v^{\prime} & =\sum_{y\in B}   \left( \deg F- \sum \left(\mult_p F-1 \right)\right)= v \cdot \deg F -\sum_{ q\in B} \, \, \sum_{p\in F^{-1}(q)} \left(\mult_p F -1\right) \\
& = v\cdot deg F -\sum_{p\in B^\prime} \left(\mult_p F-1\right).  \\
\end{split}
\]
From Prop.~\ref{euler} we have that   $ 2-2g_X = v^\prime - e^\prime + t^\prime$. 
Hence,
\[
\begin{split}
2g_X -2  &= -v^{\prime}+e^{\prime}-t^{\prime}    \\
& = - v  \cdot \deg F + \sum_{p\in B^\prime} \left(\mult_p F-1\right)  + e\cdot \deg F-t\cdot \deg F \\
& = \deg F \cdot (-v+e-t) - \sum_{p\in B^\prime} \left(\mult_p F-1\right) \\
& = \deg F  \left(2g_Y -2 \right)+\sum_{p\in B^\prime} \left(\mult_p F-1\right). 
\end{split}
\]
Therefore,
\[2(g_X-1)=\deg F  \cdot \left(2g_Y-2\right) + \sum_{p\in X} \left(\mult_p F-1\right). \]
\endproof
The above theorem is one of the most used formulas in the area of Riemann surfaces and will be used repeatedly throughout this volume. 

%***************************************
\section{Riemann-Roch theorem}
Let $\X_g$ be a non-singular curve of genus $g$ defined over the field of complex numbers $k=\C$.  Let $k(\X_g)$ be the corresponding function field.  A \textbf{divisor} $D$ on $\X_g$ is a finite sum of points 
$D=\sum_{P\in C} n_P P$, for $n_P\in\mathbb{Z}$.  The set of all divisors is denoted $Div(\X_g)$. The degree of a divisor is the sum of the coefficients; that is, $\deg(D)=\sum_{P\in \X_g}n_P$.   Let the valuation of $D$ at $P$ be given by $\ord_P(D)=n_P$.  If $\ord_P(D)\geq0$ for all $P$, then we say $D$ is an effective divisor and write $D\geq0$.

For any $f\in k(\X_g)^\times$, let $(f)$ denote the (principal) divisor associated to $f$, and let $(f)_0$ and $(f)_\infty$ denote, respectively, the zero and pole divisors of $f$ so that $(f)=(f)_0-(f)_\infty$.  The valuation of $f$ at $P$, which is really the valuation of the principal divisor associated to $f$ at $P$, is denoted $\ord_P(f)$.  Two divisors $D_1$ and $D_2$ are said to be in the same divisor class if $D_1=D_2+(f)$ for some $f\in k(\X_g)^\times$.

%\begin{defn} A point $P$ is said to be an \textbf{inflection point} of $\X_g$ if there is a line given by the equation $f=0$ such that $\mult_P(f)\geq3$.
%\end{defn}

If $\omega\neq0$ is a meromorphic differential, then we define the divisor associated to $\omega$ analogously as $(\omega)=(\omega)_0-(\omega)_\infty$.  If $\omega_1$ and $\omega_2$ are two non-identically zero differentials, then $\omega_2/\omega_1$ is in $k(\X_g)$, and so the divisors associated to $\omega_1$ and $\omega_2$ are in the same class, which we call the \textbf{canonical class}.  The divisor associated to a differential is called a \textbf{canonical divisor}.  %A proof of the following can be found in \cite[III.4.9]{fk}.

%\begin{lem}
%Let $K$ be a canonical divisor of $\X_g$. Then,  $\deg (K) = 2g-2$. 
%
%\end{lem}

For any divisor $D$, the Riemann-Roch space is 
\[\L(D)=\left\{f\in k(\X_g)^\times : (f)+D\geq0\right\}\cup\left\{0\right\}.\]  
Let $\ell(D)$ denote the dimension of the vector space $\L(D)$.  Since the degree of any principal divisor is 0, $\L(D)=\emptyset$ if $\deg(D)<0$, and $\L(0)=k$.  The Riemann-Roch Theorem states that:

\begin{thm}[Riemann-Roch]
For any divisor $D$ and canonical divisor $K$, one has
\[ \ell(D) - \ell(K-D) = \deg(D)+1-g.\]
\end{thm}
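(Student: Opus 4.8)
The plan is to follow the classical analytic (or sheaf-cohomological) route, since the paper is working over $k = \C$ and has already set up Riemann surfaces, meromorphic functions, and differentials. I would first establish the \emph{Riemann inequality} $\ell(D) \geq \deg(D) + 1 - g$ and then identify the correction term $\ell(K-D)$ via Serre duality. Concretely, the first step is to reduce to the case where $D$ is effective: if $\deg(D) < 0$ both sides are easily checked (left side is $-\ell(K-D) = -(g-1-\deg D) \cdot$, etc., using $\ell(0)=1$ and $\deg K = 2g-2$), and in general one can write $D = D_+ - D_-$ and induct on $\deg(D_-)$, comparing $\L(D)$ with $\L(D - P)$ for a point $P$: the quotient $\L(D)/\L(D-P)$ has dimension $0$ or $1$, and dually $\ell(K-D+P)-\ell(K-D)$ is $0$ or $1$, so the ``defect'' function $\delta(D) := \ell(D) - \ell(K-D) - \deg(D) - 1 + g$ satisfies $\delta(D) = \delta(D-P)$. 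Thus it suffices to prove $\delta(D) = 0$ for one divisor in each degree, e.g. $D = 0$, where the claim becomes $\ell(0) - \ell(K) = 1 - g$, i.e. $\ell(K) = g$.

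The two facts I would need to feed in are therefore: (i) $\ell(K) = g$, i.e. the space of holomorphic differentials on $\X_g$ has dimension $g$; and (ii) $\deg K = 2g - 2$. For (ii) I would invoke Riemann--Hurwitz, already proved in the excerpt: take any non-constant meromorphic function $f : \X_g \to \bP^1$, view it as a branched cover of degree $n = \deg f$, and compute $(df)$ — the differential $df$ has zeros at ramification points of order $\mult_p f - 1$ and poles of order $2$ over $\infty$ (in suitable local coordinates $df = d(z^m) = m z^{m-1} dz$ over a branch point, and $df = -w^{-2}dw$ at a pole of $f$), giving $\deg K = \sum_p (\mult_p f - 1) - 2n = (2g-2 - 2n(0-1)\cdot) \dots = 2g-2$ after substituting the Riemann--Hurwitz identity $2g-2 = -2n + \sum_p(\mult_p f - 1)$. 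For (i), the clean statement to cite is that $\dim H^0(\X_g, \Omega^1) = g$, which is the analytic definition of the genus and matches the topological genus via the Euler number computation already recorded in Proposition~\ref{euler}; alternatively one proves it alongside the inequality as $\ell(K) = g + (\text{correction})$ and pins down the correction by the symmetry $D \leftrightarrow K - D$ evaluated at $D = 0$ and $D = K$.

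The genuine obstacle — the analytic heart of the theorem — is proving the Riemann inequality $\ell(D) \geq \deg(D) + 1 - g$ together with finiteness of $\ell(D)$, and then showing the defect is exactly $\ell(K-D)$ rather than merely $\leq$ something. The honest way to get the exact value is Serre duality: $\ell(K-D) = \dim H^1(\X_g, \mathcal{O}(D))$, proved either by Dolbeault cohomology and the Hodge-theoretic pairing $H^1(\X_g,\mathcal{O}(D)) \cong H^0(\X_g, \Omega^1(-D))^\vee$, or by the Weil/adelic ``repartitions'' argument in which $H^1$ is realized as adeles modulo $k(\X_g)$ plus the $D$-integral adeles and the pairing is via residues ($\sum_P \mathrm{res}_P(f\omega) = 0$). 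Given the lecture-note style of this excerpt, I expect the actual proof here to cite Serre duality / the residue theorem as a black box and then run the short inductive bookkeeping above; so in the write-up I would state the two ingredients (i), (ii) as the substantive inputs, prove (ii) from Riemann--Hurwitz in a few lines, and reduce (i) and the main identity to the dévissage over points plus duality.
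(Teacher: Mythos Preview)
The paper does not prove Riemann--Roch at all: immediately after the statement it writes ``For a proof, see \cite[III.4.8]{fk}'' and then moves on to consequences ($\ell(K)=g$, $\deg K = 2g-2$, etc.). So there is no strategy in the paper to compare yours against; your sketch is the standard analytic route and is an appropriate thing to cite rather than reproduce in lecture notes of this kind.

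That said, two points in your outline are not right as written. First, the claim that the $\deg(D)<0$ case is ``easily checked'' is false: with $\deg(D)<0$ you get $\ell(D)=0$, so the identity becomes $\ell(K-D)=g-1-\deg(D)$, i.e.\ Riemann--Roch for the divisor $K-D$ of degree $>2g-2$ --- which you do not yet know. Second, and more seriously, the step ``$\delta(D)=\delta(D-P)$'' does not follow from the two separate facts that $\ell(D)-\ell(D-P)\in\{0,1\}$ and $\ell(K-D+P)-\ell(K-D)\in\{0,1\}$; those only give $\delta(D)-\delta(D-P)\in\{-1,0,1\}$. What you actually need is that \emph{exactly one} of the two jumps occurs, and that reciprocity statement is already equivalent (given the base case $D=0$) to the full theorem. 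You acknowledge later that Serre duality or the residue pairing is the real input, but as the proposal is organized the first paragraph presents as a self-contained reduction something that is circular without that input. If you rewrite, put the duality/residue statement up front and derive the point-by-point invariance of $\delta$ from it, rather than the other way around.
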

\noindent For a proof, see \cite[III.4.8]{fk}.  In particular, 
\[ \ell(D)\geq\deg(D)+1-g.\]
A divisor $D$ for which $\ell(D)>\deg(D)+1-g$ is called \textbf{special}.

A few properties of canonical divisors follow immediately.  Using $D=0$ and $D=K$ with the above theorem, one finds that a canonical divisor $K$ has $\ell(K)=g$ and $\deg(K)=2g-2$.  This then implies that $\ell(D)=\deg(D)+1-g$ if $\deg(D)\geq 2g-1$.

\bigskip

%*************************************
\noindent \textbf{Part 2: Weierstrass points}

\smallskip

Next, we are ready to define inflection points and Weierstrass points and describe their properties. The material is classic and can be found in all classical books on the subject. Our favored reference is \cite{cornalba}.

\section{Linear systems, inflection points, and the Wronskian}\label{sec:linear-systems}
In this section, we describe inflection points of linear systems along with a method to calculate them which involves the Wronskian.  Our primary reference for this material is \cite{Miranda}.  A special linear system will lead us to Weierstrass points and higher-order Weierstrass points, which are described in more detail in the next section.

Let $D$ be a divisor on $\X_g$.  The \textbf{complete linear system} of $D$, denoted $|D|$, is the set of all effective divisors $E\geq0$ that are linearly equivalent to $D$; that is, \[ |D|=\{E\in Div(\X_g) : E=D+(f)\text{ for some } f\in \L(D)\}.\]  Note that any function $f\in k(\X_g)$ satisfying this definition will necessarily be in $\L(D)$ because $E\geq0$.  A complete linear system has a natural projective space structure which we denote $\P(\L(D))$.

We have previously seen $\L(D)$, the vector space associated to $D$.  Now, consider the projectivization $\P(\L(D))$ and the function \[S : \P(\L(D))\to|D|\] which takes the span of a function $f\in\L(D)$ and maps it to $D+(f)$.

\begin{lem}
If $X$ is a compact Riemann surface, then the map $S:\P(\L(D))\to|D|$ is a one-to-one correspondence.
\end{lem}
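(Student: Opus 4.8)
The plan is to show that the map $S:\P(\L(D))\to|D|$ is both well-defined and a bijection by exhibiting an explicit two-sided inverse. First I would check that $S$ is well-defined: given $f\in\L(D)\setminus\{0\}$, the divisor $D+(f)$ is effective by the definition of $\L(D)$, hence lies in $|D|$; and if $g=\lambda f$ for some $\lambda\in k^\times$, then $(g)=(f)$ since $(\lambda)=0$, so $D+(g)=D+(f)$ and $S$ descends to a well-defined map on the projectivization $\P(\L(D))$.

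Next I would prove surjectivity. By the definition of the complete linear system $|D|$, any $E\in|D|$ has the form $E=D+(f)$ for some $f\in k(\X_g)^\times$; the remark following the definition notes that such an $f$ automatically lies in $\L(D)$ because $E\geq0$ forces $(f)+D\geq0$. Thus $E=S(\langle f\rangle)$ and $S$ is onto.

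For injectivity, suppose $S(\langle f\rangle)=S(\langle h\rangle)$, i.e.\ $D+(f)=D+(h)$, so $(f)=(h)$ and hence $(f/h)=(f)-(h)=0$. This means $f/h$ is a meromorphic function on the compact Riemann surface $X$ with no zeros and no poles. Here is where compactness is essential: a nonvanishing holomorphic function on a compact Riemann surface is constant (it attains its maximum modulus, or equivalently, by the earlier results, a meromorphic function with $\ord_p=0$ everywhere and no poles is holomorphic and bounded, hence constant by the maximum principle). Therefore $f/h=\lambda\in k^\times$, so $f$ and $h$ span the same line in $\L(D)$, giving $\langle f\rangle=\langle h\rangle$.

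The main obstacle — really the only nontrivial point — is the injectivity step, and specifically the appeal to compactness to conclude that a zero-free, pole-free meromorphic function on $X$ is constant. Everything else is bookkeeping with the definitions of $\L(D)$ and $|D|$. I would make sure to state this constancy fact cleanly (it follows from the maximum modulus principle applied in local charts, together with connectedness of $X$), since it is the one place the hypothesis that $X$ is a compact Riemann surface is actually used; without it the map need not be injective.
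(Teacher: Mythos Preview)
Your proof is correct and follows essentially the same approach as the paper: surjectivity by unwinding the definition of $|D|$, and injectivity via the fact that a meromorphic function on a compact Riemann surface with trivial divisor is constant. You additionally check well-definedness of $S$ on the projectivization, which the paper leaves implicit; note, though, that your opening sentence promises an ``explicit two-sided inverse,'' whereas what you actually (and correctly) do is argue surjectivity and injectivity directly.
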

\proof
To show $S$ is surjective, suppose $E\in |D|$.  Then $E=D+(f)$ for some $f\in \L(D)$.  Thus, $S(f)=D+(f)=E$.

For injectivity, suppose $S(f)=S(g)$.  Then $(f)=(g)$, so $(f/g)=0$.  On a compact Riemann surface, the only functions without any zeroes or poles are constant functions.  Hence, $f/g$ is constant, so $f=\lambda g$ for some non-zero constant $\lambda$, which means $f$ and $g$ have the same span in $\L(D)$ and hence are equal in $\P(\L(D))$.
\qed

A \textbf{(general) linear system} is a subset of a complete linear system $|D|$ which corresponds to a linear subspace of $\P(\L(D))$.  The \textbf{dimension} of a general linear system is its dimension as a projective vector space.

Let $Q\subseteq |D|$ be a nonempty linear system on $\X_g$ with corresponding vector subspace $V\subseteq \L(D)$, and let $P\in\X_g$.  For any integer $n$, consider the vector space $V(-nP):=V \cap \L(D-nP),$ which consists of those functions in $\L(D)$ with order of vanishing at least $n$ at $P$.  This leads to a chain of nested subspaces \[V(-(n-1)P)\supseteq V(-nP)\] for all $n\in\Z$.  Since $\L(D-nP)=\{0\}$ for $n\geq\deg(D)$, this chain eventually gets to $\{0\}$.  As in Proposition~\ref{prop:dimension-increase-by-1}, which appears later, the dimension drops by at most 1 in each step.  We define gap numbers as follows.

\begin{defn}
An integer $n\geq1$ is a \textbf{gap number} for $Q$ at $P$ if $V(-nP)=V(-(n-1)P)-1$.  The set of gap numbers for $Q$ at $P$ is denoted $G_P(Q)$.
\end{defn}

Let $Q(-nP)$ denote the linear system corresponding to the vector space $V(-nP)$.  Then $Q(-nP)$ consists of divisors $D\in Q$ with $D\geq nP$.  An integer $n\geq1$ is a gap number for $Q$ at $P$ if and only if $\dim Q(-nP)=\dim Q(-(n-1)P)-1$.  A linear system $Q$ is called a $g_d^r$ if $\dim Q=r$ and $\deg Q=d$.  For such a system, the sequence of gap numbers is a $(r+1)$-element subset of $\{1,2,\dots,d+1\}$.  If this sequence is anything other than $\{1,2,\dots,r+1\}$, we call $P$ an \textbf{inflection point for the linear system $Q$}. 

Suppose the sequence of gap numbers is $\{n_1,n_2,\dots,n_{r+1}\}$, written in increasing order.  For each $n_i$, one can choose an element $f_i\in Q(-(n-1)P)\setminus Q(-nP)$.  Then $\ord_P(f_i)=n_i-1-\ord_P(D)$, and because of the different orders of vanishing at $P$, these functions are linearly independent, so $\{f_1,f_2,\dots,f_{r+1}\}$ is a basis for $V$.  Such a basis is called an \textbf{inflectionary basis} for $V$ with respect to $P$.  

Taken the other way, with a basis for $V$, through a change of coordinates, one can produce an inflectionary basis and hence construct the sequence of gap numbers.  Fix a local coordinate $z$ centered at $P$ and suppose $\{h_1,h_2,\dots,h_{r+1}\}$ is any basis for $V$.  Set $g_i=z^{\ord_P(D)}h_i$ for each $i$.  Then the functions $g_i$ are holomorphic at $P$ and thus have Taylor expansions \[g_i(z) = g_i(0) + g_i'(0)z + \frac{g_i^{(2)}(0)}{2!}z^2+\cdots+\frac{g_i^{(r)}(0)}{r!}z^r+\cdots.\]  We want to find linear combinations \[G_j(z)=\sum_{i=1}^{r+1} c_{i,j}g_i(z)\] of these functions to produce orders of vanishing from $0$ to $r$ at $P$.  This is possible precisely when the matrix
\[\left(\begin{matrix}
g_1(0) & g_1'(0) & g_1^{(2)}(0) & \cdots & g_1^{(r)}(0) \\ 
g_2(0) & g_2'(0) & g_2^{(2)}(0) & \cdots & g_2^{(r)}(0) \\ 
\vdots & \vdots & \vdots & \ddots & \vdots \\
g_{r+1}(0) & g_{r+1}'(0) & g_{r+1}^{(2)}(0) & \cdots & g_{r+1}^{(r)}(0)
\end{matrix}\right)\]
is invertible.  When that occurs, the same constants $c_{i,j}$ can be used to let $f_j=\sum_i c_{i,j}h_i$ and thus produce an inflectionary basis $\{f_j\}$ of $V$ such that $\ord_P(f_j)=j-1-\ord_P(D)$.  Thus, $G_P(Q)=\{1,2,\dots,r+1\}$ and so $P$ is an inflection point for $Q$.

\begin{defn}
The Wronskian of a set of functions $\{g_1,g_2,\dots,g_r\}$ of a variable $z$ is the function
\[W(g_1,g_2,\dots,g_r) = \det \left(\begin{matrix}
g_1(z) & g_1'(z) & g_1^{(2)}(z) & \cdots & g_1^{(r)}(z) \\ 
g_2(z) & g_2'(z) & g_2^{(2)}(z) & \cdots & g_2^{(r)}(z) \\ 
\vdots & \vdots & \vdots & \ddots & \vdots \\
g_{r+1}(z) & g_{r+1}'(z) & g_{r+1}^{(2)}(z) & \cdots & g_{r+1}^{(r)}(z)
\end{matrix}\right).\]
\end{defn}

As with its use in differential equations, the Wronskian is identically zero if and only if the functions $g_1,\dots,g_r$ are linearly dependent.

We summarize the work above with the following lemma.
\begin{lem}
Let $\X_g$ be a curve with a divisor $D$ and $Q$ a linear system corresponding to a subspace $V\subseteq\L(D)$.  Let $\{f_1,\dots,f_{r+1}\}$ be a basis for $V$, and for each $i$, let $g_i=z^{\ord_P(D)}f_i$.  Let $P$ be a point with local coordinate $z$.

Then $P$ is an inflection point for $Q$ if and only if $W(g_1,\dots,g_{r+1})=0$ at $P$.
\end{lem}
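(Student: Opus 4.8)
The plan is to reduce the claim to the discussion immediately preceding the statement, where it was shown that a point $P$ is an inflection point for $Q$ exactly when a certain $(r+1)\times(r+1)$ matrix of Taylor coefficients of the $g_i$ at $P$ is invertible. First I would observe that the entry in row $i$, column $k$ (for $k=0,\dots,r$) of that matrix is $g_i^{(k)}(0)/k!$, which differs from the corresponding entry $g_i^{(k)}(0)$ of the Wronskian matrix $W(g_1,\dots,g_{r+1})$ evaluated at $z=0$ only by the nonzero scalar $1/k!$ in each column. Since multiplying the columns of a matrix by nonzero scalars does not change whether its determinant vanishes, the matrix of the preceding paragraph is invertible at $P$ if and only if $W(g_1,\dots,g_{r+1})\neq 0$ at $P$.

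Next I would chase the logical equivalences. By the discussion before the statement, when that matrix is invertible one can form linear combinations $G_j=\sum_i c_{i,j}g_i$ realizing every order of vanishing $0,1,\dots,r$ at $P$, hence (pulling back by $z^{-\ord_P(D)}$) an inflectionary basis $\{f_j\}$ with $\ord_P(f_j)=j-1-\ord_P(D)$, forcing $G_P(Q)=\{1,\dots,r+1\}$, i.e. $P$ is not an inflection point. Conversely, if $P$ is not an inflection point, then $G_P(Q)=\{1,\dots,r+1\}$, so there is an inflectionary basis whose associated $g_i=z^{\ord_P(D)}f_i$ have pairwise distinct orders of vanishing $0,1,\dots,r$ at $P$; for such a basis the Wronskian matrix at $P$ is (after reordering rows) upper triangular with nonzero diagonal entries $g_i^{(i-1)}(0)$, hence the Wronskian is nonzero at $P$. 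Since the vanishing of $W(g_1,\dots,g_{r+1})$ at $P$ is independent of the choice of basis for $V$ (changing basis multiplies the Wronskian by the nonzero determinant of the change-of-basis matrix, by multilinearity of the determinant in the rows), these two implications combine to give: $P$ is an inflection point for $Q$ if and only if $W(g_1,\dots,g_{r+1})=0$ at $P$.

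The one point requiring a little care — and the main obstacle — is the independence of the conclusion from the chosen basis and local coordinate: the $g_i$ in the statement are built from an arbitrary basis $\{f_1,\dots,f_{r+1}\}$ and arbitrary local coordinate $z$, whereas the argument above uses a specially adapted basis. For the basis change this is the multilinearity remark just made; for the coordinate change one uses the chain rule to see that passing to another local coordinate $w$ multiplies $W$ by a nonzero power of $dw/dz$ together with contributions from the change in $z^{\ord_P(D)}$, none of which can vanish at $P$. With those invariances in hand, the equivalence for an arbitrary basis follows from the equivalence for an adapted one, completing the proof. \qed
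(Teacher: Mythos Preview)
Your proposal is correct and takes essentially the same approach as the paper, which presents this lemma simply as a summary of the preceding discussion without a separate proof; you have merely made explicit the link between invertibility of the Taylor-coefficient matrix and nonvanishing of the Wronskian at $P$, together with the basis-independence remark. (One tiny slip: for an inflectionary basis ordered by increasing vanishing order the Wronskian matrix at $P$ is lower, not upper, triangular---but this does not affect the conclusion.)
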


\begin{cor}
For a fixed linear system $Q$, there are finitely many inflection points.
\end{cor}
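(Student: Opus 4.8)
## Proof Proposal

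The plan is to reduce the corollary to the previous lemma, which characterizes inflection points as the zeros of a Wronskian, and then to argue that this Wronskian is a nonzero holomorphic (or meromorphic) object whose zero set is therefore finite on a compact Riemann surface. First I would fix a basis $\{f_1,\dots,f_{r+1}\}$ of the subspace $V\subseteq\L(D)$ corresponding to $Q$. Since $Q$ is a genuine linear system, the $f_i$ are linearly independent, hence the Wronskian $W(g_1,\dots,g_{r+1})$ — formed from the rescaled functions $g_i=z^{\ord_P(D)}f_i$ in any local chart — is not identically zero, by the fact recorded just before the summarizing lemma (the Wronskian vanishes identically iff the functions are linearly dependent). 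By the lemma, $P$ is an inflection point for $Q$ precisely when this Wronskian vanishes at $P$.

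Next I would address the local-to-global issue: the Wronskian as written depends on the choice of local coordinate $z$ and on the local representation of $D$, so it is not a single globally defined function but rather a collection of local expressions. The standard fix is to observe that under a change of local coordinate the Wronskian transforms by multiplication by a nowhere-vanishing holomorphic factor (a power of the derivative of the transition function, which is nonzero since transition maps are biholomorphic, together with a nonvanishing factor coming from how $z^{\ord_P(D)}$ changes). Consequently the \emph{zero set} of the Wronskian is well-defined independent of all choices, and locally it is the zero set of a holomorphic function that is not identically zero on any chart (the charts cover the connected curve $\X_g$, so non-vanishing identically on one forces the same on all). The zeros of a not-identically-zero holomorphic function are isolated, hence discrete; and a discrete subset of the compact space $\X_g$ is finite.

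The main obstacle — really the only subtle point — is making the coordinate-independence of the vanishing locus precise, i.e.\ checking that the transformation factor relating the Wronskian in two overlapping charts is holomorphic and nowhere zero, so that a zero in one chart is a zero in the other with no spurious zeros or cancellations introduced. This is a routine but slightly tedious computation with the chain rule applied $r$ times (it is essentially the classical fact that the Wronskian of a $g^r_d$ defines a global section of an appropriate line bundle). Once that is granted, the rest is immediate: invoke compactness of $\X_g$ to pass from ``discrete'' to ``finite.'' I would present the argument by citing the lemma for the Wronskian characterization, invoking linear independence of the $f_i$ for non-triviality, sketching the change-of-coordinates invariance of the zero locus, and concluding with the discreteness-plus-compactness argument.
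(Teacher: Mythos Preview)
Your argument is correct and is precisely the standard one: inflection points are the zeros of the Wronskian, the Wronskian is not identically zero because the $f_i$ are independent, its vanishing locus is coordinate-independent (it patches to a global meromorphic $r(r+1)/2$-fold differential), and a nonzero meromorphic object on a compact Riemann surface has finitely many zeros. The paper does not actually give its own proof here---it simply cites \cite[Lemma 4.4, Corollary 4.5]{Miranda}---so there is nothing to compare; your write-up supplies exactly the details the citation stands in for, and the ``subtle point'' you flag (coordinate-independence of the zero locus) is handled in the paper a few lines later as the lemma that $W(g_1,\dots,g_m)(dz)^{m(m-1)/2}$ is a well-defined meromorphic $m(m-1)/2$-fold differential.
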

\proof See \cite[Lemma 4.4, Corollary 4.5]{Miranda}.\qed

Now, we consider higher-order differential forms.

\begin{defn}
A \textbf{meromorphic $n$-fold differential} in the coordinate $z$ on an open set $V\subseteq\mathbb{C}$ is an expression $\mu$ of the form $\mu=f(z)(dz)^n$ where $f$ is a meromorphic function on $V$.
\end{defn}

Suppose $\omega_1,\dots,\omega_m$ are meromorphic $1$-fold differentials in $z$ where $\omega_i=f_i(z)dz$ for each $i$.  Then their product is defined locally as the meromorphic $m$-form $f_1\cdots f_m (dz)^m$.  With this, we consider the Wronskian.

\begin{lem}
Let $\X_g$ be an algebraic curve with meromorphic functions $g_1,\dots,g_m$.  Then $W(g_1,\dots,g_m)(dz)^{m(m-1)/2}$ defines a meromorphic $m(m-1)/2$-fold differential on $\X_g$.
\end{lem}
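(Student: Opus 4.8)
The plan is to show that the quantity $W(g_1,\dots,g_m)(dz)^{m(m-1)/2}$, computed in one local coordinate, transforms correctly under a change of coordinate, so that the locally-defined $m(m-1)/2$-fold differentials patch together into a global object on $\X_g$. Concretely, suppose $z$ and $w$ are two local coordinates on an overlapping chart, related by a transition function $z = T(w)$ with $T'(w)\neq 0$. I must compare $W_z(g_1,\dots,g_m)$, the Wronskian formed using derivatives with respect to $z$, with $W_w(g_1,\dots,g_m)$, the Wronskian using derivatives with respect to $w$, and check that $W_z \, (dz)^{m(m-1)/2} = W_w \, (dw)^{m(m-1)/2}$, i.e. that $W_w = W_z \cdot (dz/dw)^{m(m-1)/2} = W_z \cdot (T'(w))^{m(m-1)/2}$.

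First I would set up the change-of-variables computation for the derivative columns. Writing $' = d/dw$ and using the chain rule, $\frac{d g_i}{dw} = \frac{dg_i}{dz}\cdot T'(w)$, and more generally the $k$-th $w$-derivative of $g_i$ is a linear combination of $\frac{dg_i}{dz},\dots,\frac{d^k g_i}{dz^k}$ whose leading term (the coefficient of $\frac{d^k g_i}{dz^k}$) is $(T'(w))^k$ and whose lower-order terms involve only $\frac{d^j g_i}{dz^j}$ for $j<k$. This is a standard Faà di Bruno / Leibniz-type expansion and I would state it rather than grind through it. The key point is that it is the \emph{same} expansion for every $i$, so it describes a fixed upper-triangular change of basis on the columns of the Wronskian matrix.

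Then the determinant computation is clean: the matrix $\big(\frac{d^{\,k} g_i}{dw^k}\big)_{i,k}$ equals the matrix $\big(\frac{d^{\,k} g_i}{dz^k}\big)_{i,k}$ multiplied on the right by an upper-triangular matrix $M(w)$ whose diagonal entries are $(T'(w))^0, (T'(w))^1, \dots, (T'(w))^{m-1}$. Taking determinants, $W_w = W_z \cdot \det M(w) = W_z \cdot (T'(w))^{0+1+\cdots+(m-1)} = W_z \cdot (T'(w))^{m(m-1)/2}$. Since $dz = T'(w)\,dw$, this is exactly the transformation law $W_z\,(dz)^{m(m-1)/2} = W_w\,(dw)^{m(m-1)/2}$, which is the definition of a well-defined $m(m-1)/2$-fold differential (by the same reasoning used in Lemma~3 for $\ord_p$ being coordinate-independent). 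Meromorphy of the coefficient in each chart is immediate since the $g_i$ are meromorphic and differentiation and determinants preserve meromorphy.

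The main obstacle is verifying the triangularity claim precisely — that the matrix relating the $w$-derivative columns to the $z$-derivative columns is upper triangular with the stated powers of $T'(w)$ on the diagonal and, crucially, with entries that do not depend on $i$. Once that structural fact is in hand, the determinant identity and hence the transformation law follow formally. I would prove the triangularity by induction on $k$: differentiating $\frac{d^{k} g_i}{dw^{k}} = \sum_{j\le k} a_{k,j}(w)\,\frac{d^{j} g_i}{dz^{j}}$ once more in $w$ and applying the chain rule to each $\frac{d^j g_i}{dz^j}$ produces $\frac{d^{k+1}g_i}{dw^{k+1}} = \sum_{j\le k+1} a_{k+1,j}(w)\,\frac{d^j g_i}{dz^j}$ with $a_{k+1,k+1} = a_{k,k}\cdot T'(w)$, giving $a_{k,k} = (T'(w))^k$ and $a_{k,j} = 0$ for $j>k$; the coefficients $a_{k,j}$ manifestly depend only on $T$ and its derivatives, not on $i$.
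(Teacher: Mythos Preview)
Your proposal is correct and follows exactly the approach the paper has in mind: the paper observes that meromorphy is clear locally and that what remains is the coordinate-change transformation law, then cites \cite[Lemma 4.9]{Miranda} for the details. You have simply supplied those details --- the upper-triangular column change with diagonal $(T'(w))^0,\dots,(T'(w))^{m-1}$ and the resulting factor $(T'(w))^{m(m-1)/2}$ --- which is precisely the standard argument Miranda gives.
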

\proof Since each $g_i$ is meromorphic, the Wronskian is as well, and so this is clearly a meromorphic $m(m-1)/2$-fold differential locally.  What remains to be shown is that the local functions transform to each other under changes of coordinates.  See \cite[Lemma 4.9]{Miranda} for the details.\qed

From here on, let $W(g_1,\dots,g_m)$ denote this meromorphic $m(m-1)/2$-fold differential.  We now look more closely at the poles of the Wronskian.

As with meromorphic functions and meromorphic $1$-forms, the order of vanishing of a meromorphic $n$-fold differential $f(z)(dz)^n$ is given by \[\ord_P(f(z)(dz)^n) = \ord_P(f(z)).\]  Divisors are defined in a similar way; namely, \[(\mu) = \sum_{P}\ord_P(\mu)P.\]

With these definitions, we can consider spaces of meromorphic $n$-fold differentials whose poles are bounded by $D$.  Namely, let \[\mathcal{L}^{(n)}(D)=\{\mu \text{ a meromorphic $n$-fold differential} : (\mu)\geq-D\}.\]  Equivalently, for a local coordinate $z$, if $(dz)=K$, then \[L^{(n)}(D)=\{f(z)(dz)^n : f\in\mathcal{L}(D+nK)\}.\]

\begin{lem}\label{lem:bounded-pole-orders}
Let $D$ be a divisor on an algebraic curve $\X_g$.  Let $f_1,\dots,f_m$ be meromorphic functions in $\L(D)$.  Then the meromorphic $n$-fold differential $W(f_1,\dots,f_m)$ has poles bounded by $mD$.  That is, \[W(f_1,\dots,f_m)\in\L^{m(m-1)/2}(mD).\]
\end{lem}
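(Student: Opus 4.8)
The plan is to compute the order of vanishing of $W(f_1,\dots,f_m)$ at an arbitrary point $P$ and show it is at least $-m\ord_P(D)$, which is exactly the condition $(\,W(f_1,\dots,f_m)\,)\geq -mD$, i.e. membership in $\L^{m(m-1)/2}(mD)$. Fix a local coordinate $z$ centered at $P$ and let $k=\ord_P(D)$. Since each $f_i\in\L(D)$, we have $\ord_P(f_i)\geq -k$, so each $g_i:=z^k f_i$ is holomorphic at $P$. The key algebraic fact I would use is the multilinearity of the determinant together with the product rule: scaling each row of the Wronskian matrix by the common holomorphic factor $z^k$ allows one to factor $z^{mk}$ out of $W(f_1,\dots,f_m)$, up to a correction coming from how derivatives interact with the factor $z^k$. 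Precisely, I would establish the identity
\[ W(z^k f_1,\dots,z^k f_m) = z^{mk}\, W(f_1,\dots,f_m), \]
as an identity of meromorphic $m(m-1)/2$-fold differentials (the differential part $(dz)^{m(m-1)/2}$ is the same on both sides), or more carefully $W(z^k f_1,\dots,z^k f_m) = u(z)\, z^{mk}\, W(f_1,\dots,f_m)$ where $u$ is a unit at $P$.

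The cleanest way to get this identity is a column-operation argument on the Wronskian matrix. Write $\partial = d/dz$. The $j$-th column of the matrix for $W(z^k f_1,\dots,z^k f_m)$ has entries $\partial^{\,j-1}(z^k f_i)$. By Leibniz, $\partial^{\,j-1}(z^k f_i) = \sum_{\ell=0}^{j-1}\binom{j-1}{\ell}(\partial^{\,\ell} z^k)(\partial^{\,j-1-\ell}f_i)$. Thus each column of the new matrix is a $z^k$-multiple of a column plus a combination of strictly earlier-order derivative columns (with coefficients that are the functions $\partial^{\,\ell}z^k$, which vanish to order $k-\ell$). Performing column reductions that subtract off the lower-order contributions — this is the standard ``Wronskian of a product'' lemma — one reduces the determinant to $\det(z^k \partial^{\,j-1}f_i)_{i,j}$, from which multilinearity (one factor of $z^k$ per row) yields $z^{mk}W(f_1,\dots,f_m)$. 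I would cite or reproduce this as the identity $W(hg_1,\dots,hg_m)=h^m W(g_1,\dots,g_m)$ for a scalar function $h$, which is classical.

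Granting the identity, the conclusion is immediate: since the $g_i=z^kf_i$ are holomorphic at $P$, their Wronskian $W(g_1,\dots,g_m)$ is holomorphic at $P$, so $\ord_P\!\big(W(g_1,\dots,g_m)\big)\geq 0$; hence
\[ \ord_P\!\big(W(f_1,\dots,f_m)\big) = \ord_P\!\big(W(g_1,\dots,g_m)\big) - mk \geq -mk = -m\,\ord_P(D). \]
Since $P$ was arbitrary, $(\,W(f_1,\dots,f_m)\,)\geq -mD$, which is precisely $W(f_1,\dots,f_m)\in\L^{m(m-1)/2}(mD)$, recalling from the earlier discussion that $W(f_1,\dots,f_m)$ is a well-defined meromorphic $m(m-1)/2$-fold differential.

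The main obstacle is proving the scaling identity $W(hg_1,\dots,hg_m)=h^m W(g_1,\dots,g_m)$ carefully enough that it holds as an identity of $n$-fold differentials rather than merely in one chart — though since the Wronskian $n$-fold differential is already known (from the preceding lemma) to be coordinate-independent, it suffices to verify the identity in a single local coordinate, where it is the purely formal linear-algebra computation sketched above. A minor subtlety worth a sentence is that $k=\ord_P(D)$ may be negative, but the argument is unchanged since $z^k$ is still a meromorphic unit times a power of $z$, and Leibniz's rule and multilinearity are insensitive to the sign of $k$.
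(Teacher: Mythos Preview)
Your proposal is correct and follows essentially the same approach as the paper: fix $P$, set $g_i=z^{\ord_P(D)}f_i$ so the $g_i$ are holomorphic, use the scaling identity $W(z^kf_1,\dots,z^kf_m)=z^{mk}W(f_1,\dots,f_m)$, and conclude $\ord_P(W(f_1,\dots,f_m))\geq -m\ord_P(D)$. If anything, you are more careful than the paper, which justifies the scaling identity by the single word ``multilinear'' --- your column-operation argument via Leibniz is exactly what is needed to make that step precise.
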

\proof Fix a point $P$ with local coordinate $z$.  For each $i$, let $g_i=z^{\ord_P(D)}f_i$ so that the $g_i$'s are holomorphic at $P$.  Then the Wronskian $W(g_1,\dots,g_m)$ is holomorphic at $P$ as well.  Since the Wronskian is multilinear, \[W(z^{\ord_P(D)}f_1,\dots,z^{\ord_P(D)}f_m)=z^{m\cdot\ord_P(D)}W(f_1,\dots,f_m).\]  Since this is holomorphic at $P$, we have $\ord_P(W(f_1,\dots,f_m))\geq -mD$ as desired.
\qed

Suppose $\{f_1,\dots,f_{r+1}\}$ and $\{h_1,\dots,h_{r+1}\}$ are two bases for a subspace $V\subseteq\L(D)$ with corresponding linear system $Q\subseteq|D|$.  Consider the Wronskian of each basis.  Since we have a change of basis matrix to transform from the $f_i$'s to the $h_j$'s, the Wronskian is scaled by the determinant of such a matrix which is a scalar and thus doesn't affect the zeroes or poles.  Therefore, the Wronskian is well-defined (up to a scalar multiple) by the linear system $Q$ rather than the choice of a basis.  We denote this Wronskian by $W(Q)$ and see that \[W(Q)\in\L^{(r(r+1)/2)}((r+1)D)\] by Lemma~\ref{lem:bounded-pole-orders}.

\begin{prop}
For an algebraic curve $\X_g$ of genus $g$ with linear system $Q$ of dimension $r$, \[\deg(W(Q))=r(r+1)(g-1).\]
\end{prop}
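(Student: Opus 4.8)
The plan is to reduce the computation to the already-established value $\deg(K)=2g-2$ of a canonical divisor, together with the fact that every principal divisor on $\X_g$ has degree zero. First, recall from the preceding lemmas that $W(Q)$ is a well-defined (up to a nonzero scalar) meromorphic $\frac{r(r+1)}{2}$-fold differential on $\X_g$: a basis $\{f_1,\dots,f_{r+1}\}$ of the subspace $V\subseteq\L(D)$ has $r+1$ elements, so taking $m=r+1$ in Lemma~\ref{lem:bounded-pole-orders} gives $m(m-1)/2=r(r+1)/2$; moreover $W(Q)$ is not identically zero because the $f_i$ are linearly independent. Hence $(W(Q))$ is a genuine divisor of finite degree, and the question is purely one of computing that degree.

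Next, establish the general fact that for any nonzero meromorphic $n$-fold differential $\mu$ on $\X_g$ one has $\deg((\mu))=n(2g-2)$. Fix a nonzero meromorphic $1$-fold differential $\eta$, so that $(\eta)=K$ is a canonical divisor with $\deg(K)=2g-2$ by the consequences of Riemann--Roch recorded above. The ratio $\mu/\eta^{\,n}$ transforms as a meromorphic function under changes of local coordinate --- this is exactly the transformation law invoked to prove that $W(g_1,\dots,g_m)$ patches to a global $n$-fold differential --- so $\mu/\eta^{\,n}\in k(\X_g)$, and by the Proposition that $\sum_P\ord_P f=0$ for a meromorphic function on a compact Riemann surface, its divisor has degree $0$. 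Consequently $(\mu)$ and $n\cdot(\eta)=nK$ are linearly equivalent, and since linearly equivalent divisors have the same degree, $\deg((\mu))=n\deg(K)=n(2g-2)$.

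Finally, apply this with $\mu=W(Q)$ and $n=\frac{r(r+1)}{2}$ to obtain
\[ \deg(W(Q))=\frac{r(r+1)}{2}\,(2g-2)=r(r+1)(g-1), \]
which is the claimed identity. The only mildly delicate point in the argument is the middle step --- knowing that dividing an $n$-fold differential by the $n$-th power of a $1$-form yields an honest element of the function field --- but this is immediate from the coordinate-change behavior already used in the construction of $W(Q)$, so no real obstacle remains.
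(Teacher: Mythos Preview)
Your proof is correct and follows essentially the same approach as the paper's own argument: both identify $W(Q)$ as a nonzero meromorphic $\frac{r(r+1)}{2}$-fold differential and then compute its degree as $\frac{r(r+1)}{2}\cdot\deg(K)=\frac{r(r+1)}{2}(2g-2)$. Your version is in fact slightly more careful, making explicit the step that $\mu/\eta^{\,n}$ is a global meromorphic function (hence of divisor degree zero), whereas the paper phrases this more informally by writing $W(Q)=f(z)(dz)^{r(r+1)/2}$ and asserting $\deg((f(z)))=0$.
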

\proof The proof follows from the fact that $W(Q)$ is a meromorphic $r(r+1)/2$-fold differential of the form $f(z)(dz)^{r(r+1)/2}$ for some local coordinate $z$.  Since $f(z)$ is meromorphic, the degree of $(f(z))$ is zero.  And on a curve of genus $g$, the degree of $(dz)$ is $2g-2$.  Thus, the degree of $(f(z)(dz)^{r(r+1)/2})$ is \[\dfrac{r(r+1)}{2}(2g-2)=r(r+1)g-1.\]
\qed

We define the \textbf{inflectionary weight} of a point $P$ with respect to a linear system $Q$ to be \[w_P(Q)=\sum_{i=1}^{r+1}(n_i-i),\]  where $\{n_1,\dots,n_{r+1}\}$ is the sequence of gap numbers for $Q$ at $P$ written in ascending order.  It follows that $P$ is an inflection point for $Q$ precisely when $w_P(Q)>0.$  It turns out that the inflectionary weight of $P$ is exactly the order of vanishing of the Wronskian at $P$.

\begin{lem}
If $G_P(Q)=\{n_1,\dots,n_{r+1}\}$ and $\{f_1,\dots,f_{r+1}\}$ is a basis for $V$, then 
\[ w_P(Q)=\ord_P(W(z^{\ord_P(D)}f_1,\dots,z^{\ord_P(D)}f_{r+1})).\]
\end{lem}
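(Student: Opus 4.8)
The plan is to compute the order of vanishing at $P$ of the Wronskian of the holomorphic functions $g_i := z^{\ord_P(D)} f_i$ directly from the Taylor expansions, and match it to $w_P(Q) = \sum_{i=1}^{r+1}(n_i - i)$. The starting observation is that $\{f_1,\dots,f_{r+1}\}$ may be replaced by \emph{any} basis of $V$ without changing $\ord_P(W)$: a change of basis multiplies the Wronskian by the (nonzero) determinant of the change-of-basis matrix, which has order $0$ at $P$. So I would choose the inflectionary basis constructed earlier in Section~\ref{sec:linear-systems}, for which $\ord_P(f_i) = n_i - 1 - \ord_P(D)$, hence $\ord_P(g_i) = n_i - 1$. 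After reordering so that $n_1 < n_2 < \cdots < n_{r+1}$, the functions $g_i$ have strictly increasing orders of vanishing $n_1 - 1 < n_2 - 1 < \cdots$ at $P$.

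Next I would write $g_i(z) = c_i z^{n_i - 1} + (\text{higher order})$ with $c_i \neq 0$, and examine the matrix $M(z)$ whose $(i,j)$ entry is $g_i^{(j-1)}(z)$, for $j = 1,\dots,r+1$. Differentiating $j-1$ times, the $(i,j)$ entry has order exactly $n_i - 1 - (j-1) = n_i - j$ at $P$ when $n_i - 1 \geq j - 1$, and is still of order $\geq n_i - j$ (a nonnegative-order holomorphic term, possibly vanishing) otherwise; in all cases the leading term of the $(i,j)$ entry is a nonzero constant multiple of $z^{n_i - j}$ coming from differentiating the leading term $c_i z^{n_i-1}$ of $g_i$, \emph{provided} $n_i - j \geq 0$. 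The key structural point is that the $i$-th row of $M(z)$ has all entries divisible by $z^{\max(n_i - (r+1), 0)}$ in a way that lets us factor out a power of $z$ from each row; more precisely, factoring $z^{n_i - (r+1)}$ (or handling small $n_i$ separately) from row $i$ and passing to the limit $z \to 0$ of the rescaled determinant, the leading matrix becomes (up to nonzero diagonal scaling by the $c_i$ and by factorials from the differentiation) the matrix with $(i,j)$ entry $z^{n_i - j}$ evaluated in a formal sense — and its determinant is a generalized Vandermonde-type expression in the exponents $n_1 - 1 < \cdots < n_{r+1} - 1$, which is nonzero precisely because these exponents are distinct.

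Carrying this out, the total power of $z$ factored from the rows is $\sum_{i=1}^{r+1}(n_i - 1) - \binom{r+1}{2}\cdot\frac{1}{?}$ — more carefully, from each column $j$ one naturally extracts one differentiation, so the order of $\det M(z)$ at $P$ equals $\sum_{i=1}^{r+1}(n_i - 1) - \sum_{j=1}^{r+1}(j-1) = \sum_{i=1}^{r+1}(n_i - i) = w_P(Q)$, with the leading coefficient a nonzero constant (a product of the $c_i$, factorials, and a nonvanishing generalized Vandermonde determinant). Since $W(g_1,\dots,g_{r+1}) = \det M$, this gives $\ord_P(W) = w_P(Q)$.

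The main obstacle is the careful bookkeeping in the generalized Vandermonde step: when some gap numbers $n_i$ are small (so $n_i - j < 0$ for some $j$ and the corresponding entry of $M$ is genuinely $0$ or of positive order rather than matching the "expected" power $z^{n_i-j}$), one must verify that the leading-order matrix is still nonsingular. The clean way around this is to note that the rescaled limit matrix has $(i,j)$ entry proportional to the falling factorial $(n_i-1)(n_i-2)\cdots(n_i-j+1)$ (which is exactly the coefficient extracted by differentiating $z^{n_i-1}$ and setting the appropriate power to $1$), and that this matrix — a Wronskian-at-a-point of the monomials $z^{n_i-1}$ — is nonsingular exactly because the monomials have distinct degrees; this is precisely the linear independence criterion for the Wronskian already recorded after the definition of $W$. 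So the argument reduces to an already-available fact, and the obstacle is purely one of organizing the limit computation so that this reduction is transparent.
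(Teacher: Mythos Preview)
Your argument is correct and is essentially the standard proof: reduce to an inflectionary basis (harmless since a change of basis scales the Wronskian by a nonzero constant), observe that each term in the Leibniz expansion of $\det M(z)$ has order at least $\sum_i (n_i-1)-\sum_j(j-1)=\sum_i(n_i-i)=w_P(Q)$, and then verify that the leading coefficient is the nonzero determinant $\bigl(\prod_i c_i\bigr)\det\bigl[(n_i-1)_{\,j-1}\bigr]_{i,j}$, which reduces by column operations to a Vandermonde in the distinct integers $n_1-1<\cdots<n_{r+1}-1$. The paper does not supply its own proof of this lemma --- it simply cites \cite[Lemma~4.14]{Miranda} --- and the argument you have outlined is precisely the one given there, so there is nothing to compare.

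One small cosmetic remark: the passage where you write ``$\binom{r+1}{2}\cdot\frac{1}{?}$'' and then backtrack is unnecessary once you adopt the permutation-expansion viewpoint you use immediately afterward; you can delete the row-factoring discussion entirely and go straight to the Leibniz sum, which makes the order count and the identification of the leading matrix transparent in one stroke.
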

\proof See \cite[Lemma 4.14]{Miranda}.
\qed

\begin{thm}\label{thm:total-inf-weight}
For $\X_g$ an algebraic curve of genus $g$ with $Q$ a $g_d^r$ on $\X_g$, the total inflectionary weight on $\X_g$ is \[\sum_{P\in\X_g}w_P(Q)=(r+1)(d+rg-r).\]
\end{thm}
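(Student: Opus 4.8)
The plan is to turn the total inflectionary weight into a sum of orders of a single global object — the Wronskian $r(r+1)/2$-fold differential $W(Q)$ — and then read off the answer from two facts already proved: the local identity $w_P(Q)=\ord_P(W(z^{\ord_P(D)}f_1,\dots,z^{\ord_P(D)}f_{r+1}))$ and the degree count $\deg\big((W(Q))\big)=r(r+1)(g-1)$.

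First I would fix a basis $\{f_1,\dots,f_{r+1}\}$ for the subspace $V\subseteq\L(D)$ corresponding to $Q$ and recall that $W(Q):=W(f_1,\dots,f_{r+1})$ is a well-defined (up to a nonzero scalar) meromorphic $r(r+1)/2$-fold differential on $\X_g$, whose local expression at $P$ in a centered coordinate $z$ is $F(z)(dz)^{r(r+1)/2}$ with $F(z)=\det\!\big(f_i^{(j)}(z)\big)$. Applying the identity $W(\phi g_1,\dots,\phi g_{r+1})=\phi^{\,r+1}W(g_1,\dots,g_{r+1})$ with $\phi=z^{\ord_P(D)}$ (the same computation invoked in Lemma~\ref{lem:bounded-pole-orders}) gives
\[ W\big(z^{\ord_P(D)}f_1,\dots,z^{\ord_P(D)}f_{r+1}\big)=z^{(r+1)\ord_P(D)}\,F(z). \]
Taking orders at $P$ and using the lemma that identifies $w_P(Q)$ with the order of this Wronskian, I obtain the key local identity
\[ w_P(Q)=(r+1)\ord_P(D)+\ord_P(W(Q)). \]

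Next I would sum this over all $P\in\X_g$. The sum is finite: $D$ has finite support, and by the corollary on finiteness of inflection points $\ord_P(W(Q))\neq 0$ for only finitely many $P$. Hence
\[ \sum_{P\in\X_g}w_P(Q)=(r+1)\sum_{P\in\X_g}\ord_P(D)+\sum_{P\in\X_g}\ord_P(W(Q))=(r+1)\deg(D)+\deg\big((W(Q))\big). \]
Since $Q$ is a $g_d^r$, we have $\deg(D)=d$, and by the preceding proposition $\deg\big((W(Q))\big)=\tfrac{r(r+1)}{2}(2g-2)=r(r+1)(g-1)$. Substituting yields
\[ \sum_{P\in\X_g}w_P(Q)=(r+1)d+r(r+1)(g-1)=(r+1)\big(d+r(g-1)\big)=(r+1)(d+rg-r). \]

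I expect the only genuine subtlety — the "hard part" — to be the bookkeeping that $\ord_P(W(Q))$, defined through the local function $F(z)$ in an arbitrary centered coordinate, is independent of that choice, and that its total over $\X_g$ is exactly the degree of the $r(r+1)/2$-fold (canonical-type) divisor; both are handled by the transformation law for $n$-fold differentials and the earlier proposition, so once those are cited the computation is immediate. A minor point worth stating is that each $w_P(Q)\ge 0$, which follows from $W(Q)\in\L^{(r(r+1)/2)}\big((r+1)D\big)$ together with the local identity above, so the sum genuinely records inflectionary weight with the correct sign.
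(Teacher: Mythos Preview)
Your proof is correct and follows essentially the same route as the paper: fix a basis, use the identity $W(z^{\ord_P(D)}f_1,\dots,z^{\ord_P(D)}f_{r+1})=z^{(r+1)\ord_P(D)}W(f_1,\dots,f_{r+1})$ together with the lemma identifying $w_P(Q)$ with the order of this holomorphic Wronskian, sum over $P$, and substitute $\deg D=d$ and $\deg(W(Q))=r(r+1)(g-1)$. Your extra remarks on coordinate-independence and nonnegativity are sound but not needed beyond what the paper already established in the preceding lemmas.
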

\proof
Choose a basis $\{f_1,\dots,f_{r+1}\}$.  Then 
\begin{align*}
\sum_{P\in\X_g}w_P(Q) &= \sum_{P\in\X_g}  \\
& = \sum_{P\in\X_g} \ord_P(W(z^{\ord_P(D)}f_1,\dots,z^{\ord_P(D)}f_{r+1})) \\
& = \sum_{P\in\X_g} [(r+1)\ord_P(D) +\ord_P(W(Q))] \\ 
& = (r+1)d + r(r+1)(g-1).
\end{align*}
\qed

We now consider a special linear system, namely the canonical linear system, $Q=K$.  Inflection points for this system are called \textbf{Weierstrass points}, and the \textbf{Weierstrass weight} of such a point is its inflectionary weight with respect to $K$.

By Riemann-Roch, $\dim |K|=g-1$ and $\deg K=2g-2$.

\begin{cor}
The total Weierstrass weight on a curve of genus $g$ is $g^3-g=(g+1)g(g-1)$.
\end{cor}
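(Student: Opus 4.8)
The plan is to derive the total Weierstrass weight as a direct corollary of Theorem~\ref{thm:total-inf-weight}. That theorem gives, for any $g_d^r$ linear system $Q$ on $\X_g$, the identity $\sum_{P\in\X_g}w_P(Q)=(r+1)(d+rg-r)$. So the only work is to identify the correct parameters $d$ and $r$ for the canonical linear system $Q=|K|$ and substitute.

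First I would recall, as noted just before the statement, that by Riemann-Roch the canonical system $|K|$ has dimension $r=\dim|K|=g-1$ and degree $d=\deg K = 2g-2$; these follow from $\ell(K)=g$ (hence the projective dimension is $g-1$) and $\deg(K)=2g-2$, both of which are listed among the consequences of Riemann-Roch earlier in the excerpt. Then I would substitute $r=g-1$ and $d=2g-2$ into the formula of Theorem~\ref{thm:total-inf-weight}: the factor $(r+1)$ becomes $g$, and the factor $(d+rg-r)$ becomes $(2g-2)+(g-1)g-(g-1) = 2g-2 + g^2 - g - g + 1 = g^2 - g - 1 + \ldots$ — so I would carefully collect terms to get $(2g-2)-(g-1)+(g-1)g = (g-1)+(g-1)g = (g-1)(1+g)$. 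Hence the total is $g\cdot(g-1)(g+1) = g^3-g$, which is the claimed value, and which also equals $(g+1)g(g-1)$ as stated.

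There is essentially no obstacle here: the corollary is a one-line specialization. The only thing to be mildly careful about is the bookkeeping in simplifying $(g-1)^1$-type terms — writing $d+rg-r = d + r(g-1)$ and plugging in $r=g-1$, $d=2g-2$ makes it transparent that $d+r(g-1) = 2(g-1) + (g-1)^2 = (g-1)(2 + g - 1) = (g-1)(g+1)$, so that $(r+1)(d+rg-r) = g(g-1)(g+1) = g^3-g$. One might also remark, for the reader's orientation, that Weierstrass points as defined here are exactly the inflection points of the canonical embedding, so this count is the classical statement that a genus-$g$ curve has $g^3-g$ Weierstrass points counted with weight.

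\proof
By Riemann-Roch, the canonical linear system $Q=|K|$ is a $g_d^r$ with $r=\dim|K|=g-1$ and $d=\deg K=2g-2$. Applying Theorem~\ref{thm:total-inf-weight}, the total Weierstrass weight is
\[
\sum_{P\in\X_g}w_P(K) = (r+1)(d+rg-r) = (r+1)\big(d+r(g-1)\big).
\]
Substituting $r=g-1$ and $d=2g-2=2(g-1)$ gives $d+r(g-1) = 2(g-1)+(g-1)^2 = (g-1)(g+1)$, so
\[
\sum_{P\in\X_g}w_P(K) = g\cdot(g-1)(g+1) = (g+1)g(g-1) = g^3-g.
\]
\qed
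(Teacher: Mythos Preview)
Your proof is correct and follows exactly the paper's approach: the paper's proof is the single line ``Theorem~\ref{thm:total-inf-weight} with $d=2g-2$ and $r=g-1$,'' which is precisely the substitution you carry out. Your arithmetic is clean (and indeed writing $d+rg-r = d+r(g-1)$ makes the simplification transparent).
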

\proof Theorem~\ref{thm:total-inf-weight} with $d=2g-2$ and $r=g-1$.\qed

For any $q\geq1$, we use the linear system $qK$ to define $q$-Weierstrass points, which have $q$-Weierstrass weights.  For $q=1$, the results are above.  For $q=2$, $d=\deg qK=q(2g-2)$ and $r=\dim |qK|=(2q-1)(g-1)$.
\begin{cor}\label{cor:total-q-weight}
The total $q$-Weierstrass weight, for $q\geq2$, on a curve of genus $g$ is $g(g-1)^2(2q-1)^2$.
\end{cor}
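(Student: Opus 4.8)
The plan is to apply Theorem~\ref{thm:total-inf-weight} to the linear system $Q=qK$, in exact parallel with the proof of the preceding corollary (the case $q=1$). All that is needed are the two numerical inputs of this system viewed as a $g_d^r$: the degree $d=\deg(qK)$ and the dimension $r=\dim|qK|$.

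Computing $d$ is immediate from $\deg K=2g-2$, namely $d=\deg(qK)=q(2g-2)=2q(g-1)$. The one step requiring a moment's care is $r$. Since $g\geq 2$ and $q\geq 2$ we have $\deg(qK)=2q(g-1)\geq 4(g-1)\geq 2g-1$, so the sharp form of Riemann-Roch applies and gives $\ell(qK)=\deg(qK)+1-g=2q(g-1)+1-g=(2q-1)(g-1)$. Since $\dim|qK|$ is the dimension of the projective space $\P(\L(qK))$, this yields $r=\ell(qK)-1=(2q-1)(g-1)-1$. It is exactly here that the hypothesis $q\geq 2$ enters: for $q=1$ one has $\deg K=2g-2<2g-1$ and $\ell(K)=g$, which is why that case is treated separately and produces the different value $g^3-g$.

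It then remains to substitute into the identity of Theorem~\ref{thm:total-inf-weight}, which I would use in the form $\sum_{P\in\X_g}w_P(qK)=(r+1)\bigl(d+r(g-1)\bigr)$. Here $r+1=(2q-1)(g-1)$, and a short simplification gives
\[ d+r(g-1)=2q(g-1)+\bigl[(2q-1)(g-1)-1\bigr](g-1)=(g-1)\bigl(2q+(2q-1)(g-1)-1\bigr)=(g-1)(2q-1)g. \]
Multiplying the two factors, $\sum_{P\in\X_g}w_P(qK)=(2q-1)(g-1)\cdot g(g-1)(2q-1)=g(g-1)^2(2q-1)^2$, as claimed. There is no genuine obstacle beyond bookkeeping; the only point one must not slip on is that $\dim$ of a linear system means the projective dimension $\ell-1$, and that the clean Riemann-Roch equality $\ell(D)=\deg(D)+1-g$ is legitimate here only because $\deg(qK)\geq 2g-1$.
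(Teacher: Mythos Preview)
Your proof is correct and follows the same approach as the paper: substitute the parameters of the linear system $|qK|$ into Theorem~\ref{thm:total-inf-weight}. You are in fact more careful than the paper's own text, which writes $r=\dim|qK|=(2q-1)(g-1)$; as you point out, the projective dimension is $r=\ell(qK)-1=(2q-1)(g-1)-1$, and it is this value that actually yields $g(g-1)^2(2q-1)^2$ upon substitution.
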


\begin{rem}
There are $q$-Weierstrass points for any curve of genus $g>1$ and any $q\geq 1$.
\end{rem}

\section{Introduction to Weierstrass points}

In this section, we use divisors on algebraic curves (following the notation of \cite{stichtenoth}) to give a more intuitive introduction to Weierstrass points on curves defined over $\C$.  For curves in positive characteristic, the situation is somewhat different; see \cite{Neeman, MR568202, voloch}.  We then introduce higher-order Weierstrass points, which we call $q$-Weierstrass points.  We conclude this section using results from the previous section to get a bound on the number of $q$-Weierstrass points, which will be useful later in computing an upper bound for the size of $\Aut(\X_g)$.

\subsection{Weierstrass points via gap numbers}

Let $P$ be a point on $\X_g$ and consider the vector spaces $\L(nP)$ for $n=0,1,\dots,2g-1$.  These vector spaces contains functions with poles only at $P$ up to a specific order.  This leads to a chain of inclusions \[ \L(0)\subseteq \L(P) \subseteq \L(2P) \subseteq \dots \subseteq \L((2g-1)P)\] with a corresponding non-decreasing sequence of dimensions \[ \ell(0) \leq \ell(P) \leq \ell(2P) \leq \dots \leq \ell((2g-1)P).\]
The following proposition shows that the dimension goes up by at most 1 in each step.
\begin{prop}\label{prop:dimension-increase-by-1} For any $n>0$, \[ \ell((n-1)P)\leq \ell(nP)\leq \ell((n-1)P)+1.\]\end{prop}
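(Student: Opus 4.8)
The plan is to treat the two inequalities separately. The left-hand inequality $\ell((n-1)P)\leq\ell(nP)$ follows immediately from an inclusion of Riemann--Roch spaces: if $f\in\L((n-1)P)$ then $(f)+(n-1)P\geq 0$, and since the divisor $P$ is effective we get $(f)+nP=(f)+(n-1)P+P\geq 0$, so $f\in\L(nP)$. Hence $\L((n-1)P)\subseteq\L(nP)$, and the dimensions compare accordingly.

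For the right-hand inequality the strategy is to realize $\L((n-1)P)$ as the kernel of a linear functional on $\L(nP)$, so that the quotient embeds into a one-dimensional space. First I would fix a local coordinate $z$ centered at $P$. Every $f\in\L(nP)$ has $\ord_P(f)\geq -n$, so near $P$ it admits a Laurent expansion $f=\sum_{j\geq -n}c_j(f)\,z^j$. Define $\phi:\L(nP)\to k$ by $\phi(f)=c_{-n}(f)$ (and $\phi(0)=0$); this is $k$-linear because forming Laurent expansions and extracting a fixed coefficient are linear operations.

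Next I would identify the kernel. If $\phi(f)=0$ then $c_{-n}(f)=0$, so $\ord_P(f)\geq -(n-1)$; the order conditions at all points other than $P$ are identical for $\L(nP)$ and $\L((n-1)P)$, so this gives $(f)+(n-1)P\geq 0$, i.e. $f\in\L((n-1)P)$. Conversely, any $f\in\L((n-1)P)\subseteq\L(nP)$ has $\ord_P(f)\geq -(n-1)>-n$, hence $c_{-n}(f)=0$. Therefore $\ker\phi=\L((n-1)P)$, and by the rank--nullity theorem $\ell(nP)=\dim\ker\phi+\dim\img\phi=\ell((n-1)P)+\dim\img\phi\leq\ell((n-1)P)+1$, since $\img\phi$ is a subspace of the one-dimensional space $k$.

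There is no serious obstacle here; the one point needing care is verifying that $\ker\phi$ is exactly $\L((n-1)P)$ and nothing larger, which comes down to the fact that the two Riemann--Roch spaces differ only in the order condition imposed at $P$, while $\phi$ precisely detects whether that stronger condition holds. The choice of local coordinate $z$ is immaterial: changing $z$ only rescales $\phi$ by the leading coefficient of the coordinate change, altering neither its kernel nor the dimension of its image.
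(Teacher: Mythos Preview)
Your proof is correct and follows essentially the same approach as the paper. Both arguments hinge on the leading Laurent coefficient at $P$: the paper takes two functions $f_1,f_2\in\L(nP)\setminus\L((n-1)P)$ and forms a linear combination cancelling their $z^{-n}$ terms to land in $\L((n-1)P)$, while you package the same idea as the linear functional $\phi(f)=c_{-n}(f)$ with kernel $\L((n-1)P)$ and invoke rank--nullity; these are the same computation in different clothing.
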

\proof
It suffices to show $\ell(nP)\leq \ell((n-1)P)+1$.  To do this, suppose $f_1, f_2\in\ell(nP)\setminus\ell((n-1)P).$  Since $f_1$ and $f_2$ have the same pole order at $P$, using the series expansions of $f_1$ and $f_2$ with a local coordinate, one can find a linear combination of $f_1$ and $f_2$ to eliminate their leading terms.  That is, there are constants $c_1, c_2\in k$ such that $c_1f_1+c_2f_2$ has a strictly smaller pole order at $P$, so $c_1f_1+c_2f_2\in\L((n-1)P)$.  Then $f_2$ is in the vector space generated by a basis of $\L((n-1)P)$ along with $f_1$.  Since this is true for any two functions $f_1,f_2$, we conclude $\ell(nP)\leq \ell((n-1)P)+1$, as desired.\qed

For any integer $n>0$, we call $n$ a \textbf{Weierstrass gap number of $P$} if $\ell(nP)=\ell((n-1)P)$; that is, if there is no function $f\in k(\X_g)^\times$ such that $(f)_\infty=nP$.  Weierstrass stated and proved the ``gap'' theorem, or \textbf{L\"uckensatz}, on gap numbers in the 19th century, likely in the 1860s.
\begin{thm}[The Weierstrass ``gap'' theorem]For any point $P$, there are exactly $g$ gap numbers $\alpha_i(P)$ with \[1=\alpha_1(P)<\alpha_2(P)<\cdots<\alpha_g(P)\leq2g-1.\]
\end{thm}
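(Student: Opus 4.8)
The plan is to prove the Weierstrass gap theorem by analyzing the chain of Riemann-Roch spaces $\L(0)\subseteq\L(P)\subseteq\cdots\subseteq\L((2g-1)P)$ and counting how many times the dimension fails to increase. First I would recall from Proposition~\ref{prop:dimension-increase-by-1} that each inclusion $\L((n-1)P)\subseteq\L(nP)$ raises the dimension by either $0$ or $1$. By definition, $n$ is a gap number exactly when the dimension does \emph{not} increase at step $n$; otherwise it is a non-gap. So the total number of gaps among $n=1,\dots,2g-1$ equals $(2g-1)$ minus the number of non-gaps in that range, and the number of non-gaps equals the total dimension increase, namely $\ell((2g-1)P)-\ell(0)$.

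Next I would evaluate the two endpoints of the chain. We have $\L(0)=k$, so $\ell(0)=1$. For the top, since $\deg((2g-1)P)=2g-1\geq 2g-1$, the Riemann-Roch theorem (or rather its stated consequence that $\ell(D)=\deg(D)+1-g$ whenever $\deg D\geq 2g-1$) gives $\ell((2g-1)P)=(2g-1)+1-g=g$. Therefore the number of non-gaps in $\{1,\dots,2g-1\}$ is $g-1$, and the number of gaps is $(2g-1)-(g-1)=g$. This already shows there are exactly $g$ gap numbers, all lying in $\{1,2,\dots,2g-1\}$, which yields the bound $\alpha_g(P)\leq 2g-1$.

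It remains to pin down $\alpha_1(P)=1$. Here I would argue that $n=1$ is always a gap: if $1$ were a non-gap, there would be a function $f\in k(\X_g)^\times$ with $(f)_\infty=P$, i.e.\ a degree-one map $\X_g\to\P^1$, forcing $g=0$ (equivalently, $\ell(P)=2$ would make $\X_g$ rational), contradicting $g\geq 2$. Hence $\ell(P)=\ell(0)=1$ and $1$ is a gap, so $\alpha_1(P)=1$. Writing the $g$ gap numbers in increasing order then gives the chain $1=\alpha_1(P)<\alpha_2(P)<\cdots<\alpha_g(P)\leq 2g-1$ as claimed. The one point requiring a little care—the main (though minor) obstacle—is justifying that the dimension count is clean: one must use that $\deg D<0$ forces $\ell(D)=0$ to know the chain genuinely starts contributing from $\L(0)$, and one must invoke the ``$\deg\geq 2g-1$'' case of Riemann-Roch for the top term rather than just the inequality $\ell(D)\geq\deg D+1-g$, since only the exact equality delivers the precise count of $g$ non-gaps.
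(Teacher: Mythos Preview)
Your proof is correct and follows essentially the same approach as the paper: the paper deduces the Weierstrass gap theorem as the special case $P_i=P$ of the Noether gap theorem, whose proof is exactly your counting argument---use Proposition~\ref{prop:dimension-increase-by-1} so that each step increases the dimension by $0$ or $1$, evaluate $\ell(0)=1$ and $\ell((2g-1)P)=g$ via Riemann--Roch, and conclude there are $(2g-1)-(g-1)=g$ gaps in $\{1,\dots,2g-1\}$. You even supply the verification that $\alpha_1(P)=1$ (no degree-one map to $\P^1$ when $g\geq1$), a point the paper's proof of the Noether theorem leaves implicit.
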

This theorem is a special case of the Noether ``gap'' theorem, which we state and prove below.

The set of gap numbers, denoted by $G_P$,  forms the \textbf{Weierstrass gap sequence} for $P$.  The non-gap numbers form a semi-group under addition since they correspond to pole orders of functions.

\begin{defn}[Weierstrass point]
If the gap sequence at $P$ is anything other than $\{1,2,\dots,g\}$, then $P$ is called a \textbf{Weierstrass point}.
\end{defn}
Equivalently, $P$ is a Weierstrass point if $\ell(gP)>1$; that is, if there is a function $f$ with $(f)_\infty=mP$ for some $m$ with $1<m\leq g$.

The notion of gaps can be generalized, which we briefly describe.  Let $P_1,P_2,\dots,$ be a sequence of (not necessarily distinct) points on $\X_g$.  Let $D_0=0$ and, for $n\geq1$, let $D_n=D_{n-1}+P_n$.  One constructs a similar sequence of vector spaces \[\L(D_0)\subseteq\L(D_1)\subseteq\L(D_2)\subseteq\dots\subseteq\L(D_n)\subseteq\cdots\] with a corresponding non-decreasing sequence of dimensions \[\ell(D_0)<\ell(D_1)<\ell(D_2)<\dots<\ell(D_n)<\cdots.\]  If $\ell(D_n)=\ell(D_{n-1})$, then $n$ is a \textbf{Noether gap number} of the sequence $P_1, P_2, \dots .$

\begin{thm}[The Noether ``gap" theorem] 
For any sequence $P_1,P_2, \dots$, there are exactly $g$ Noether gap numbers $n_i$ with \[1=n_1<n_2<\dots<n_g\leq 2g-1.\]
\end{thm}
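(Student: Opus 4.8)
The plan is to prove the Noether gap theorem by analyzing the jumps in the chain of dimensions $\ell(D_n)$, using Proposition~\ref{prop:dimension-increase-by-1} (extended to the one-point-at-a-time setting) together with the Riemann-Roch theorem. The key observation is that since $D_n = D_{n-1}+P_n$, each inclusion $\L(D_{n-1})\subseteq\L(D_n)$ raises dimension by either $0$ or $1$ — exactly the argument already given in Proposition~\ref{prop:dimension-increase-by-1}, where one uses local series expansions at $P_n$ to show two functions with the same pole behavior at $P_n$ can be combined to lie in $\L(D_{n-1})$. So $n$ is a Noether gap number precisely when $\ell(D_n)=\ell(D_{n-1})$, i.e.\ when the chain does \emph{not} jump at step $n$.

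First I would establish the two endpoints of the count. At $n=0$ we have $\L(D_0)=\L(0)=k$, so $\ell(D_0)=1$. For large $n$, once $\deg(D_n)=n\geq 2g-1$, Riemann-Roch gives $\ell(D_n)=\deg(D_n)+1-g=n+1-g$, and this formula persists for all larger $n$, so the chain jumps by exactly $1$ at every step $n\geq 2g$. Counting jumps: between $n=0$ and $n=N$ (for $N$ large) the total increase in dimension is $\ell(D_N)-\ell(D_0)=(N+1-g)-1=N-g$. Since there are $N$ steps total and each contributes $0$ or $1$, exactly $g$ of those steps must contribute $0$ — these are the $g$ Noether gap numbers. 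This already shows there are exactly $g$ gap numbers; it remains to locate them in $\{1,2,\dots,2g-1\}$.

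Next I would pin down the range. The number $n=1$ is always a gap: $\L(D_0)=k$ and $\L(D_1)=\L(P_1)$, but a nonconstant function with a single simple pole would give a degree-$1$ map to $\P^1$, forcing $g=0$; hence $\ell(D_1)=1=\ell(D_0)$, so $1=n_1$ is a gap. For the upper bound, I noted above that every step $n\geq 2g$ is a non-gap (the chain jumps there by Riemann-Roch), so all $g$ gap numbers lie in $\{1,2,\dots,2g-1\}$. Combining: the gap numbers satisfy $1=n_1<n_2<\cdots<n_g\leq 2g-1$.

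**The main obstacle** I anticipate is purely bookkeeping: making the jump-counting argument airtight requires care that $\ell(D_n)=n+1-g$ genuinely holds for \emph{all} $n\geq 2g-1$ (not just that it holds eventually), which follows because $K-D_n$ then has negative degree so $\ell(K-D_n)=0$ in Riemann-Roch — a point worth stating explicitly. A secondary subtlety is that Proposition~\ref{prop:dimension-increase-by-1} as stated handles $\L((n-1)P)\subseteq\L(nP)$ for a fixed point $P$; I would remark that the identical local-expansion argument applies verbatim to $\L(D_{n-1})\subseteq\L(D_{n-1}+P_n)$, since only the behavior at the single new point $P_n$ is relevant. With those two remarks in place the proof is essentially a counting argument and should be short.
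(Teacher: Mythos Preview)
Your proposal is correct and follows essentially the same route as the paper: both proofs use that each step raises $\ell(D_n)$ by $0$ or $1$, then count the total increase via Riemann--Roch (the paper computes $\ell(D_{2g-1})-\ell(D_0)=g-1$ over $2g-1$ steps, while you count out to a large $N$, but the arithmetic is the same). You actually supply one detail the paper omits, namely the verification that $n_1=1$; your degree-$1$-map argument for this is fine provided $g\geq 1$, which is implicit in the statement.
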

\proof In analog with Proposition~\ref{prop:dimension-increase-by-1}, one can show the dimension goes up by at most 1 in each step; that is, \[ \ell(D_{n-1})\leq \ell(D_n)\leq \ell(D_{n-1})+1\] for all $n>0$.  First, note that the Riemann-Roch theorem is an equality for $n>2g-1$, so the dimension goes up by 1 in each step, so there are no gap numbers greater than $2g-1$.

Now, consider the chain $\L(D_0)\subseteq\dots\subseteq\L(D_{2g-1})$.  By Riemann-Roch, $\ell(D_0)=1$ and $\ell(D_{2g-1})=g$, so in this chain of vector spaces, the dimension must increase by 1 exactly $g-1$ times in $2g-1$ steps.  Thus, for $n\in\{1,2,\dots,2g-1\}$,  there are $g$ values of $n$ such that $\ell(D_n)=\ell(D_{n-1})$.  These $g$ values are the Noether gap numbers.
\qed

\begin{rem}The Weierstrass ``gap'' theorem is a special case of the Noether ``gap'' theorem, taking $P_i=P$ for all $i$.\end{rem}
%This result is a direct application of the Riemann-Roch theorem, and the proof can be found in \cite[III.5.4]{fk}.

Since a Weierstrass gap sequence contains $g$ natural numbers between $1$ and $2g-1$, and since its complement in $\mathbb{N}$ is a semi-group, we can begin to list the possible gap sequences for points on curves of small genus.%\footnote{This paragraph and what follows immediately below don't need to go here, nor do they need to be included at all.}
\begin{itemize}
\item For $g=1$, the only possible gap sequence is $\{1\}$.  Note that this means a curve of genus $g=1$ has no Weierstrass points.
\item For $g=2$, the possible sequences are $\{1,2\}$ and $\{1,3\}$.
\item For $g=3$, the possible sequences are $\{1,2,3\}, \{1,2,4\}, \{1,2,5\}, \{1,3,5\}$.
\end{itemize}
Two questions immediately arise.  First, given $g$, how many possible sequences are there?  Second, for each sequence, is there a curve $\X_g$ with a point $P$ that has that given sequence?

Regarding the first question, it has been shown that, for $N_g$ the number of sequences for genus $g$ has Fibonacci-like growth; namely, that \[\lim_{g\to\infty}N_g\phi^{-g}=S\] where $\phi=\frac{1+\sqrt{5}}{2}$ and $S$ is a constant.  This result, as well as references to other estimates on $N_g$, can be found in \cite{zhai}.

As to the second question, it has been shown that the answer in general is no.  In \cite{buchweitz}, Buchweitz gives an example of a sequence with $g=16$ for which there is no curve $\X_{16}$ with a point $P$ that has that sequence.  On the other hand, it has been shown that every sequence for $g\leq9$ is possible; see \cite{komeda}.

%***********************************************************
\subsection{Weierstrass points via holomorphic differentials}
Continuing with a point $P$ on a curve $\X_g$, recall that $n$ is a gap number precisely when $\ell(nP)=\ell((n-1)P)$.  By Riemann-Roch, this occurs exactly when \[\ell(K-(n-1)P)-\ell(K-nP)=1\] for a canonical divisor $K$, which is the divisor associated to some differential $dx$.  Thus there is $f\in k(\X_g)^\times$ such that $(f)+K-(n-1)P\geq0$ and $(f)+K-nP\not\geq0$, which implies that $\ord_P(f\cdot dx)=n-1$.  Since $(f)+K\geq(n-1)P\geq0$ (for $n\geq1$), $n$ is a gap number of $P$ exactly when there is a holomorphic differential $f\cdot dx$ such that $\ord_P(f\cdot dx)=n-1$.

For $H^0(\X_g,\Omega^1)$ the space of holomorphic differentials on $\X_g$, by Riemann-Roch, the dimension of $H^0(\X_g,\Omega^1)$ is $g$.  Let $\{\psi_i\}$, for $i=1,\dots, g$, be a basis, chosen in such a way that \[\ord_P(\psi_1)<\ord_P(\psi_2)<\cdots<\ord_P(\psi_g).\]  Let $n_i=\ord_P(\psi_i)+1$.
\begin{defn}[1-gap sequence]The \textbf{1-gap sequence at $P$} is $\{n_1,n_2,\dots,n_g\}$.
\end{defn}
We then have the following equivalent definition of a Weierstrass point.
\begin{defn}[Weierstrass point]
If the 1-gap sequence at $P$ is anything other than $\{1,2,\dots,g\}$, then $P$ is a Weierstrass point.
\end{defn}
With this formulation, we see $P$ is a Weierstrass point exactly when there is a holomorphic differential $f\cdot dx$ with $\ord_P(f\cdot dx)\geq g$.
\begin{defn}[Weierstrass weight]
The \textbf{Weierstrass weight} of a point $P$ is \[w(P)=\sum_{i=1}^g (n_i-i).\]
\end{defn}
In particular, $P$ is a Weierstrass point if and only if $w(P)>0$.

\subsubsection{Bounds for weights of Weierstrass points}
Suppose $\X_g$ is a curve of genus $g\geq1$, $P\in\X_g$, and consider the 1-gap sequence of $P$ $\{n_1,n_2,\dots,n_g\}$.  We will refer to the non-gap sequence of $P$ as the complement of this set within the set $\{1,2,\dots,2g\}$.  That is, the non-gap sequence is the sequence $\{\alpha_1,\dots,\alpha_g\}$ where \[1<\alpha_1<\dots<\alpha_g=2g.\]
\begin{prop}\label{prop:sums-of-non-gaps}
For each integer $j$ with $0<j<g$, $\alpha_j+\alpha_{g-j}\geq 2g$.
\end{prop}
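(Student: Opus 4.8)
The key observation is that the non-gap numbers (that is, the pole orders of functions in $k(\X_g)$ with poles only at $P$) form a numerical semigroup under addition, as noted earlier in the text. I would reformulate the statement in terms of this semigroup: writing the non-gaps in $\{1,2,\dots,2g\}$ as $\alpha_1 < \alpha_2 < \cdots < \alpha_g$, the claim is that $\alpha_j + \alpha_{g-j} \geq 2g$ for $0 < j < g$. The natural strategy is a counting argument combined with closure under addition: I would fix $j$ and consider the sum $\alpha_j + \alpha_{g-j}$, and argue by contradiction that if $\alpha_j + \alpha_{g-j} < 2g$, then one can produce too many non-gap numbers below $2g$.

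\textbf{Main steps.} First I would suppose for contradiction that $\alpha_j + \alpha_{g-j} \leq 2g - 1$. Consider the $j$ smallest non-gaps $\alpha_1 < \cdots < \alpha_j$ and the $g-j$ non-gaps $\alpha_1 < \cdots < \alpha_{g-j}$. Adding $\alpha_j$ to each of the latter gives $g-j$ numbers $\alpha_j + \alpha_1 < \alpha_j + \alpha_2 < \cdots < \alpha_j + \alpha_{g-j}$, all of which are non-gaps by closure of the semigroup under addition, and all of which are at most $\alpha_j + \alpha_{g-j} \leq 2g-1 < 2g$. These are also all strictly larger than $\alpha_j$. So the set $\{\alpha_1, \dots, \alpha_j\} \cup \{\alpha_j + \alpha_1, \dots, \alpha_j + \alpha_{g-j}\}$ consists of $j + (g-j) = g$ distinct non-gap numbers, the first $j$ being $\leq \alpha_j$ and the remaining $g-j$ being strictly between $\alpha_j$ and $2g$. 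But among $\{1, 2, \dots, 2g\}$ there are exactly $g$ non-gaps total (the complement of the $g$ gaps), namely $\alpha_1, \dots, \alpha_g$; so this forces $\{\alpha_j + \alpha_1, \dots, \alpha_j + \alpha_{g-j}\} = \{\alpha_{j+1}, \dots, \alpha_g\}$, and in particular $\alpha_j + \alpha_{g-j} = \alpha_g = 2g$, contradicting the assumption $\alpha_j + \alpha_{g-j} \leq 2g-1$. Hence $\alpha_j + \alpha_{g-j} \geq 2g$.

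\textbf{The main obstacle.} The delicate point is the bookkeeping: making sure the two families of non-gaps are genuinely disjoint and that we have correctly accounted for exactly $g$ of them in the range $\{1, \dots, 2g\}$. The disjointness follows because the first family lies in $[\alpha_1, \alpha_j]$ while the translated family lies in $(\alpha_j, \alpha_j + \alpha_{g-j}]$ — but one should check the edge case where $\alpha_1$ could coincide with $\alpha_j$ (i.e. $j=1$), which is handled since then the first family is just $\{\alpha_1\}$ and the argument still goes through. I would also want to double-check that $\alpha_g = 2g$ is genuinely the largest non-gap (this is the symmetry at the endpoint, following from Riemann--Roch: $\ell(2gP) - \ell((2g-1)P) = 1$ since $2g > 2g-2$, so $2g$ is always a non-gap, and there are exactly $g$ gaps among $1, \dots, 2g-1$ by the Weierstrass gap theorem, leaving exactly $g$ non-gaps in $\{1, \dots, 2g\}$ with the largest being $2g$). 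Once that framework is in place, the inequality drops out immediately from semigroup closure plus counting.
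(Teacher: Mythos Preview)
Your argument is correct and follows essentially the same route as the paper: both proofs assume $\alpha_j+\alpha_{g-j}<2g$, use closure of the non-gap set under addition to manufacture a family of distinct non-gaps strictly below $2g$, and then reach a contradiction by counting. The only cosmetic difference is that the paper fixes $\alpha_{g-j}$ and translates by $\alpha_1,\dots,\alpha_j$ to get $j$ non-gaps strictly between $\alpha_{g-j}$ and $\alpha_g=2g$ (where only $j-1$ can fit), whereas you fix $\alpha_j$ and translate by $\alpha_1,\dots,\alpha_{g-j}$; these are symmetric versions of the same counting trick.
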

\proof
Suppose there is some $j$ with $\alpha_j+\alpha_{g-j}<2g.$  The non-gaps are contained in a semigroup under addition, so for every $k\leq j$, since $\alpha_k+\alpha_{g-j}<2g$ as well, $\alpha_k+\alpha_{g-j}$ is also a non-gap which lies between $\alpha_{g-j}$ and $\alpha_g=2g$.  There are $j$ such non-gaps, though there can only be $j-1$ non-gaps between $\alpha_{g-j}$ and $\alpha_g$.  Thus, we have a contradiction.
\qed
\begin{prop}\label{prop:max-w-weight}
For $P\in\X_g$, $w(P)\leq g(g-1)/2$, with equality if and only if $P$ is a branch point on a hyperelliptic curve $\X_g$.
\end{prop}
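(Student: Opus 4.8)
The plan is to express everything in terms of the $1$-gap sequence $\{n_1,\dots,n_g\}$ of $P$ (with $1=n_1<\dots<n_g\le 2g-1$ and $w(P)=\sum_{i=1}^g(n_i-i)$) and its complement inside $\{1,\dots,2g\}$, the non-gap sequence $\{\alpha_1,\dots,\alpha_g\}$ with $\alpha_g=2g$. From the partition identity $\sum_{i=1}^g n_i+\sum_{i=1}^g\alpha_i=\sum_{k=1}^{2g}k=g(2g+1)$ one gets $w(P)=g(2g+1)-\frac{g(g+1)}{2}-\sum_{i=1}^g\alpha_i$. The upper bound is then immediate from Proposition~\ref{prop:sums-of-non-gaps}: summing $\alpha_j+\alpha_{g-j}\ge 2g$ over $j=1,\dots,g-1$ gives $\sum_{j=1}^g\alpha_j\ge g(g-1)+2g=g(g+1)$, whence $w(P)\le g(2g+1)-\frac{g(g+1)}{2}-g(g+1)=\frac{g(g-1)}{2}$. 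Equality forces $\sum_j\alpha_j=g(g+1)$, i.e.\ $\alpha_j+\alpha_{g-j}=2g$ for every $j\in\{1,\dots,g-1\}$.

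The heart of the argument is the equality case: I would show that these symmetry relations, combined with the semigroup property of the non-gaps ($\alpha_i+\alpha_k$ is again a non-gap as long as it is $\le 2g-1$), force the non-gap sequence to be exactly $\{2,4,\dots,2g\}$. This I would do by an induction proving $\alpha_k=k\alpha_1$ for $k=1,\dots,g-1$: in the inductive step, $\alpha_1+\alpha_{g-k-1}$ is a non-gap lying strictly between $\alpha_{g-k-1}$ and $2g$, and the order constraints coming from the inductive hypothesis squeeze it to be exactly $\alpha_{g-k}=2g-k\alpha_1$; the relation $\alpha_{k+1}=2g-\alpha_{g-k-1}$ then propagates the pattern. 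Feeding $\alpha_{g-1}=(g-1)\alpha_1$ into $\alpha_1+\alpha_{g-1}=2g$ yields $g\alpha_1=2g$, so $\alpha_1=2$ (for $g=2$ this is immediate), and hence the gap sequence is $\{1,3,\dots,2g-1\}$. I expect this induction --- in particular making sure $\alpha_1+\alpha_{g-k-1}$ cannot overshoot $\alpha_{g-k}$ --- to be the main obstacle.

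It then remains to recognise the extremal gap sequence geometrically. Having gap sequence $\{1,3,\dots,2g-1\}$ is the same as $2$ being a non-gap at $P$. If $2$ is a non-gap, there is $f\in k(\X_g)^\times$ with $(f)_\infty=2P$, giving a degree-$2$ map $\X_g\to\P^1$; thus $\X_g$ is hyperelliptic and $P$, being the unique preimage of $\infty$, is totally ramified there, i.e.\ a branch point. Conversely, if $\X_g$ is hyperelliptic with hyperelliptic map $\phi$ and $P$ a branch point, then normalising so $\phi(P)=\infty$ gives $(\phi)_\infty=2P$ (because $\deg\phi=\mult_P\phi=2$); so $2$ is a non-gap, every even integer up to $2g$ is a non-gap, and --- since $\{1,\dots,2g\}$ contains exactly $g$ non-gaps --- the gap sequence is $\{1,3,\dots,2g-1\}$, giving $w(P)=\sum_{i=1}^g(2i-1-i)=\frac{g(g-1)}{2}$.

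As a cleaner alternative that handles the bound and the equality characterisation simultaneously, I would instead prove the sharper lemma that the $i$-th gap satisfies $n_i\le 2i-1$: if $n_i\ge 2i$, then $\{1,\dots,n_i-1\}$ contains at least $i$ positive non-gaps, and subtracting each of them from $n_i$ produces $i$ distinct gaps in $\{1,\dots,n_i-1\}$, one more than the $i-1$ gaps available there --- a contradiction. Then $w(P)=\sum_{i=1}^g(n_i-i)\le\sum_{i=1}^g(i-1)=\frac{g(g-1)}{2}$, with equality precisely when $n_i=2i-1$ for all $i$, and one concludes with the same geometric translation as above.
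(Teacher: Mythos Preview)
Your primary approach is essentially the paper's: both reduce the bound to Proposition~\ref{prop:sums-of-non-gaps} (the inequality $\alpha_j+\alpha_{g-j}\ge 2g$), summed to bound $\sum\alpha_i$ from below. Your treatment of the equality case is in fact more careful than the paper's --- the paper simply asserts that the minimum of $\sum\alpha_i$ occurs at $\alpha_1=2$ and that this forces the even non-gap sequence, whereas you supply the induction showing that the symmetry relations $\alpha_j+\alpha_{g-j}=2g$ together with the semigroup property pin down $\alpha_k=k\alpha_1$ and hence $\alpha_1=2$; you also spell out the geometric identification of the extremal sequence with hyperelliptic branch points, which the paper leaves implicit.

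Your alternative route via the pointwise bound $n_i\le 2i-1$ is genuinely different and more self-contained: it bypasses Proposition~\ref{prop:sums-of-non-gaps} entirely, handles the bound and the equality characterisation in one stroke, and the contradiction argument (subtracting non-gaps from a gap produces too many gaps below it) is elementary. The paper's approach has the virtue of isolating the symmetric inequality on non-gaps as a reusable lemma; yours is shorter if one only wants this proposition.
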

\proof
The Weierstrass weight of $P$ is
\begin{align*}
w(P) & = \sum_{i=1}^g n_i- \sum_{i=1}^g i \\ 
&= \sum_{i=1}^{2g} i - \sum_{i=1}^g \alpha_i - \sum_{i=1}^g i \\
&= \sum_{i=g+1}^{2g-1} i - \sum_{i=1}^{g-1} \alpha_i.
\end{align*}
The first sum is $3g(g-1)/2$ and the second sum, via Proposition~\ref{prop:sums-of-non-gaps} is at least $(g-1)g$.  Hence, $w(P)\leq g(g-1)/2$.

To prove the second part, we note that the weight is maximized when the sum of the non-gaps is minimized.  That occurs when $\alpha_1=2$, which implies the non-gap sequence is $\{2,4,\dots,2g\}$, and so the 1-gap sequence is $\{1,3,5,\dots,2g-1\}$, which is the 1-gap sequence of a branch point on a hyperelliptic curve.
\qed

\begin{cor}\label{cor:number-w-pts}
For a curve of genus $g\geq2$, there are between $2g+2$ and $g^3-g$ Weierstrass points.  The lower bound of $2g+2$ occurs only in the hyperelliptic case.
\end{cor}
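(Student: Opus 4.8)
The plan is to combine the total Weierstrass weight $g^3-g$ (from the corollary following Theorem~\ref{thm:total-inf-weight}) with the pointwise bounds on individual Weierstrass weights. Recall first that there are only finitely many Weierstrass points, that each Weierstrass point $P$ satisfies $w(P)\geq 1$ (the weight is a nonnegative integer, positive exactly at Weierstrass points), and that $\sum_{P\in\X_g}w(P)=g^3-g$. Since each of the $N$ Weierstrass points contributes at least $1$ to this sum, we get $N\leq g^3-g$, which is the stated upper bound.

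For the lower bound I would invoke Proposition~\ref{prop:max-w-weight}, which gives $w(P)\leq g(g-1)/2$ for every point $P$. Hence
\[ g^3-g=\sum_{P\in\X_g}w(P)\leq N\cdot\frac{g(g-1)}{2},\]
and solving for $N$ yields $N\geq\dfrac{g^3-g}{g(g-1)/2}=2(g+1)=2g+2$ (here $g\geq2$, so the division is legitimate).

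It remains to characterize equality. If $N=2g+2$, then the inequality above is forced to be an equality, so $w(P)=g(g-1)/2$ for every Weierstrass point $P$; by the equality clause of Proposition~\ref{prop:max-w-weight} each such $P$ is a branch point of a hyperelliptic curve, and in particular $\X_g$ is hyperelliptic. Conversely, suppose $\X_g$ is hyperelliptic. Applying the Riemann--Hurwitz formula to the degree-$2$ hyperelliptic map $\X_g\to\P^1$ shows it has exactly $2g+2$ ramification points; each of these has $1$-gap sequence $\{1,3,\dots,2g-1\}$ and hence Weierstrass weight $g(g-1)/2$, as computed in the proof of Proposition~\ref{prop:max-w-weight}. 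These $2g+2$ points already account for $(2g+2)\cdot g(g-1)/2=(g+1)g(g-1)=g^3-g$ units of weight, i.e.\ the entire total, so $\X_g$ has no further Weierstrass points and $N=2g+2$.

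The bookkeeping with the two bounds is routine; the one genuinely substantive input is the converse direction, namely verifying that the $2g+2$ fixed points of the hyperelliptic involution really are Weierstrass points of maximal weight. This rests only on the Riemann--Hurwitz count of ramification points and on the gap-sequence computation $\{1,3,\dots,2g-1\}$ for a hyperelliptic branch point that was already carried out in the proof of Proposition~\ref{prop:max-w-weight}, so no new machinery is needed.
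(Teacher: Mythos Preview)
Your argument is correct and follows essentially the same route as the paper: combine the total weight $g^3-g$ with the pointwise bounds $1\leq w(P)\leq g(g-1)/2$ from Proposition~\ref{prop:max-w-weight} to extract the two inequalities on $N$. Your treatment of the equality case is in fact more complete than the paper's, which only argues one direction; your explicit converse (Riemann--Hurwitz gives $2g+2$ branch points, each of maximal weight, exhausting the total) is a welcome addition.
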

\proof The total weight of the Weierstrass points is $g^3-g$.  In Proposition~\ref{prop:max-w-weight}, we see that the maximum weight of a point is $g(g-1)/2$, which occurs in the hyperelliptic case.  Thus, there must be at least $\dfrac{g^3-g}{g(g-1)/2} = 2g+2$ Weierstrass points.  On the other hand, the minimum weight of a point is 1, so there are at most $g^3-g$ Weierstrass points.
\qed

\subsection{Higher-order Weierstrass points via holomorphic $q$-differentials}
In the above, we described Weierstrass points by considering the vector spaces $\mathcal{L}(K-nP)$ for $n\geq0$.  Now, we let $q\in\mathbb{N}$ and proceed analogously with the vector spaces $\mathcal{L}(qK-nP)$ to describe $q$-Weierstrass points.

If \[\ell(qK-(n-1)P)-\ell(qK-nP)=1,\] then there is some $q$-fold differential $dx^q$ and some $f\in k(\X_g)^\times$ such that $f\cdot dx^q$ is a holomorphic $q$-fold differential with $\ord_P(f\cdot dx^q)=n-1$.

Let $H^0(\X_g,(\Omega^1)^q)$ denote the space of holomorphic $q$-fold differentials on $\X_g$, and let $d_q$ denote the dimension of this space.  By Riemann-Roch, \[d_q=\begin{cases}g & \text{if $q=1$,} \\ (g-1)(2q-1) & \text{if $q>1$.}\end{cases}\]  Let $\{\psi_i\}$, for $i=1,\dots,d_q$, be a basis of $H^0(\X_g,(\Omega^1)^q)$, chosen in such a way that \[\ord_P(\psi_1)<\ord_P(\psi_2)<\cdots<\ord_P(\psi_{d_q}).\]  Let $n_i=\ord_P(\psi_i)+1$.
\begin{defn}[$q$-gap sequence]
The \textbf{$q$-gap sequence at $P$} is $\{n_1,n_2,\dots,n_{d_q}\}$.
\end{defn}
\begin{defn}[$q$-Weierstrass point]
If the $q$-gap sequence is anything other than $\{1,2,\dots,d_q\}$, then $P$ is a $q$-Weierstrass point.
\end{defn}
Thus, $P$ is a $q$-Weierstrass point exactly when there is a holomorphic $q$-fold differential $f\cdot dx^q$ such that $\ord_P(f\cdot dx^q)\geq d_q$.

When $q=1$, we have a Weierstrass point.  For $q>1$, a $q$-Weierstrass point is called a \textbf{higher-order Weierstrass point}.  
\begin{defn}[$q$-Weierstrass weight]
The \textbf{$q$-Weierstrass weight} of a point $P$ is \[w^{(q)}(P)=\sum_{i=1}^{d_q}(n_i-i).\]
\end{defn}
In particular, $P$ is a $q$-Weierstrass point if and only if $w^{(q)}(P)>0$.

\begin{rem}
For each $q\geq1$, there are a finite number of $q$-Weierstrass points.  This follows from Corollary~\ref{cor:total-q-weight} which says that the total $q$-weight of the $q$-Weierstrass points is finite.
\end{rem}
%***********************************************

\def\s{\sigma}
\def\fix{\mbox{Fix }}

%\noindent \textbf{Part  : Group actions on curves}
%**************************************************************

%\section{Group actions on Riemann surfaces}
%pg. 75

%\subsection{Quotient spaces}

%\subsection{Ramification of the quotient map}

\section{Hurwitz's theorem} 
In this section we will use the results of previous sections to study the automorphisms of algebraic curves. The main goal is to provide a proof of the Hurwitz's theorem on the bound of the order of the automorphism group. For any $\s \in \Aut (\X_g)$, we denote by $|\s|$ its order and $\fix (\s)$ the set of fixed points of $\s$ on $\X_g$.

\begin{prop}\label{prop:number-fixed-pts} Let $\s \in \Aut (\X_g)$ be a non-identity element. Then $\s$ has at most $2g+2$ fixed points.
\end{prop}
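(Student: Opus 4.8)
The plan is to apply the Riemann--Hurwitz formula to the quotient map $\pi: \X_g\to Y:=\X_g/\langle\sigma\rangle$, where $\langle\sigma\rangle$ is the finite cyclic group generated by $\sigma$, acting faithfully on $\X_g$. Recall that $Y$ inherits a natural structure of compact Riemann surface of some genus $g_Y\geq 0$ for which $\pi$ is a non-constant holomorphic map of degree $n:=|\sigma|$, and that for every $P\in\X_g$ one has $\mult_P\pi=|\mathrm{Stab}_{\langle\sigma\rangle}(P)|$. This orbit-space description is the only external input; I would simply cite it (see e.g.\ \cite[Ch.~III]{Miranda} or \cite{fk}) rather than reprove it.

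The key observation is that every fixed point of $\sigma$ is \emph{totally} ramified for $\pi$: if $\sigma(P)=P$ then $\sigma^k(P)=P$ for all $k$, so $\mathrm{Stab}_{\langle\sigma\rangle}(P)=\langle\sigma\rangle$ and hence $\mult_P\pi=n$. Writing $t:=|\fix(\sigma)|$, this yields
\[
\sum_{P\in\X_g}\bigl(\mult_P\pi-1\bigr)\;\geq\;\sum_{P\in\fix(\sigma)}\bigl(\mult_P\pi-1\bigr)\;=\;t\,(n-1).
\]
Substituting this into the Riemann--Hurwitz formula
\[
2g-2\;=\;n\,(2g_Y-2)\;+\;\sum_{P\in\X_g}\bigl(\mult_P\pi-1\bigr)
\]
gives $2g-2\geq n(2g_Y-2)+t(n-1)$. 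Using first $g_Y\geq 0$ and then $n\geq 2$,
\[
t\;\leq\;\frac{2g-2+2n}{n-1}\;=\;2+\frac{2g}{n-1}\;\leq\;2+2g,
\]
which is the asserted bound. The same computation shows that equality can occur only when $n=2$, $g_Y=0$ and $\pi$ has no ramification outside $\fix(\sigma)$, i.e.\ only for the hyperelliptic involution on a hyperelliptic curve.

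I do not expect a serious obstacle: the estimate is essentially a one-line consequence of Riemann--Hurwitz once the quotient picture is available, and the sharp constant $2g+2$ drops out transparently as the $n=2$, $g_Y=0$ case. The only step not established in the present excerpt is the orbit-space construction of $\X_g/\langle\sigma\rangle$ together with the formula $\mult_P\pi=|\mathrm{Stab}(P)|$; if one wished to stay strictly within Riemann--Roch one could instead assume $t\geq 2g+3$, use Riemann--Roch to produce a non-constant function whose pole divisor is supported on some of the fixed points, and compare it with its $\sigma$-translate to force $\sigma$ into a two-dimensional Riemann--Roch space and contradict $\sigma\neq\mathrm{id}$ — but this is longer and less transparent, so I would present the Riemann--Hurwitz argument.
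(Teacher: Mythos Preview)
Your argument via Riemann--Hurwitz on the quotient $\X_g\to\X_g/\langle\sigma\rangle$ is correct, and in fact the paper carries out exactly this computation a few pages later as a separate Lemma, obtaining the sharper bound $|\fix(\sigma)|\leq 2(|\sigma|+g-1)/(|\sigma|-1)$ that you also derive. For the Proposition itself, however, the paper takes a different and more elementary route: choose a point $P$ \emph{not} fixed by $\sigma$, use Riemann--Roch to produce $f\in k(\X_g)$ with $(f)_\infty=rP$ for some $r\leq g+1$, and observe that $h:=f-\sigma^*f$ has poles only at $P$ and $\sigma(P)$, hence at most $2r\leq 2g+2$ zeroes, while every fixed point of $\sigma$ is a zero of $h$.

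The trade-off is this. Your approach yields more: the order-dependent bound and the identification of the extremal case with the hyperelliptic involution. The paper's approach buys logical independence: it does not use that $|\sigma|<\infty$, whereas your argument needs a finite cyclic group to form the quotient. In the paper's development this matters, because the very next Proposition (finiteness of $\Aut(\X_g)$) is proved \emph{using} the present bound; invoking $|\sigma|<\infty$ here would be circular unless you supply an independent reason (action on $H^0(\X_g,\Omega^1)$, uniformization, etc.), which you do not. So your proof is fine as mathematics but would need either a reordering of the exposition or an extra sentence justifying $|\sigma|<\infty$ to slot into the paper as written. Your closing sketch of a Riemann--Roch alternative is, incidentally, close in spirit to what the paper actually does, except that the paper places the pole at a non-fixed point rather than at fixed points.
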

\proof 
Let $\s$ be a non-trivial automorphism of $\X_g$ and let $\s^*$ denote the corresponding automorphism of $k(\X_g)$.  Since $\s$ is not the identity, there is some $P\in\X_g$ not fixed by $\s$.  By Riemann-Roch, $\ell((g+1)P)\geq2$, so there is a meromorphic $f\in k(\X_g)$ with $(f)_\infty = rP$ for some $r$ with $1\leq r\leq g+1$.

Consider the function $h=f-\s^*(f)$.  The poles of $h$ are limited to the poles of $f$ and $\s^*(f)$, so $h$ has at most $2r$ poles.  Since $h$ is meromorphic, $h$ similarly has at most $2r$ zeroes, which correspond exactly to fixed points of $\s$.  Since $r\leq g+1$, we conclude $\s$ has at most $2g+2$ fixed points.
\qed

\begin{prop}
Any genus $g\geq2$ nonhyperelliptic Riemann surface $\X_g$ has a finite automorphism group $\Aut(\X_g)$.
\end{prop}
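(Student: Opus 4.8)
The plan is to show that $\Aut(\X_g)$ acts faithfully on the finite set of Weierstrass points of $\X_g$, and that the stabilizer of any point is finite, whence $\Aut(\X_g)$ itself is finite. The key ingredients are all available in the excerpt: the set of Weierstrass points is a nonempty finite set (by Corollary~\ref{cor:number-w-pts}, it has between $2g+2$ and $g^3-g$ elements when $g\geq 2$), and any non-identity automorphism has at most $2g+2$ fixed points (Proposition~\ref{prop:number-fixed-pts}).

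First I would observe that the Weierstrass weight $w(P)$ is an intrinsic invariant of the pair $(\X_g,P)$: it is defined purely in terms of the dimensions $\ell(nP)$, equivalently in terms of orders of vanishing of holomorphic differentials at $P$, and these are preserved by any automorphism $\s$ since $\s^*$ induces an isomorphism $\L(nP)\cong\L(n\,\s^{-1}(P))$ and pulls back the space of holomorphic differentials $H^0(\X_g,\Omega^1)$ to itself. Hence $\s$ permutes the set $\mathcal{W}$ of Weierstrass points. This gives a group homomorphism $\rho\colon \Aut(\X_g)\to \mathrm{Sym}(\mathcal{W})$, and since $\mathcal{W}$ is finite, the image of $\rho$ is finite; it therefore remains to bound the kernel.

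Next I would analyze $\ker\rho$. An element $\s\in\ker\rho$ fixes every point of $\mathcal{W}$. But $|\mathcal{W}|\geq 2g+2$ for $g\geq2$, while any non-identity automorphism fixes at most $2g+2$ points by Proposition~\ref{prop:number-fixed-pts}. In the non-hyperelliptic case the lower bound $2g+2$ is \emph{not} attained (Corollary~\ref{cor:number-w-pts} says the value $2g+2$ occurs only for hyperelliptic curves), so $|\mathcal{W}|\geq 2g+3>2g+2$; thus no non-identity automorphism can fix all of $\mathcal{W}$, forcing $\ker\rho=\{1\}$. Therefore $\rho$ is injective and $\Aut(\X_g)\hookrightarrow \mathrm{Sym}(\mathcal{W})$ is finite.

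The main obstacle is making precise the claim that $\Aut(\X_g)$ permutes $\mathcal{W}$, i.e.\ that $w(\s(P))=w(P)$ for all $\s$; once that functoriality of the Weierstrass gap sequence under isomorphisms is established (which amounts to noting that an automorphism is in particular an isomorphism of Riemann surfaces and carries the canonical linear system to itself), the counting step is immediate from the results already proved. One should also remark that the hypothesis $g\geq 2$ is essential for $\mathcal{W}\neq\emptyset$, and non-hyperellipticity is what upgrades the inequality $|\mathcal{W}|\geq 2g+2$ to the strict inequality needed to kill the kernel; for hyperelliptic curves the same faithfulness holds but needs the extra observation that the set of $2g+2$ Weierstrass points together with their geometry still admits no fixing automorphism, which is why the statement is phrased for the non-hyperelliptic case.
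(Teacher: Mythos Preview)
Your proposal is correct and follows essentially the same approach as the paper: define a homomorphism from $\Aut(\X_g)$ to the permutation group of the (finite) set of Weierstrass points, then kill the kernel using Proposition~\ref{prop:number-fixed-pts} together with the strict inequality $|\mathcal{W}|>2g+2$ from Corollary~\ref{cor:number-w-pts} in the non-hyperelliptic case. The only cosmetic difference is that the paper phrases the invariance of Weierstrass points under automorphisms via invariance of the Wronskian, whereas you argue directly that $\s^*$ induces isomorphisms $\L(nP)\cong\L(n\,\s^{-1}(P))$; both justifications are equivalent and standard.
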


\proof
Let $\s\in\Aut(\X_g)$ with corresponding automorphism $\s^*$ of $k(\X_g)$.  The Wronskian does not depend on choice of local coordinate and thus is invariant under $\s^*$.  Therefore, if $P$ is a $q$-Weierstrass point of a certain $q$-Weierstrass weight, then $\s(P)$ is a $q$-Weierstrass point with the same weight.

Thus, any automorphism permutes the set of Weierstrass points.  Let $S_{WP}$ denote the permutation group of the set of Weierstrass points.  Since there are finitely many Weierstrass points (as in Corollary~\ref{cor:number-w-pts}), $S_{WP}$ is a finite group.  We have a homomorphism $\phi:\Aut(\X_g)\to S_{WP}$.  It will suffice to show that $\phi$ is injective.  We prove this separately in the cases that $\X_g$ is hyperelliptic or nonhyperelliptic.

Suppose $\X_g$ is non-hyperelliptic and suppose $\s\in\ker(\phi)$.  Then $\s$ fixes all of the Weierstrass points.  From Corollary~\ref{cor:number-w-pts}, since $\X_g$ is non-hyperelliptic, there are more than $2g+2$ Weierstrass points.  By Proposition~\ref{prop:number-fixed-pts}, $\s$ fixes more than $2g+2$ Weierstrass points and so must be the identity automorphism on $\X_g$. Thus, $\phi$ is an injection into a finite group, so $\Aut(\X_g)$ is finite.

Suppose $\X_g$ is hyperelliptic, and let $\omega\in\Aut(\X_g)$ denote the hyperelliptic involution.  Suppose $\s\in\ker(\phi)$ with $\s\neq\omega$.  $\s$ fixes the $2g+2$ branch points of $\X_g$.  Consider the map $\pi:\X_g\to\X_g/\langle\omega\rangle\cong\P^1$.  $\s$ descends to an automorphism of $\P^1$ which fixes at least $2g+2=6$ points, and so is the identity on $\P^1$.  Thus, $\s\in\langle\omega\rangle$, so $\s$ is the identity in $\Aut(\X_g)$, which means $\ker(\phi)$ is finite, so $\Aut(\X_g)$ is finite.
\qed

\begin{thm}[Hurwitz]
Any genus $g\geq 2$ Riemann surface $\X_g$ has at most $84 (g-1)$ automorphisms. 
\end{thm}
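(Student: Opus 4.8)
The plan is to combine the Riemann--Hurwitz formula with the finiteness of $\Aut(\X_g)$ (already established) and an extremal analysis of the possible ramification data of the quotient map. Let $G=\Aut(\X_g)$, which we know is finite, say of order $N$. Consider the quotient $Y=\X_g/G$, a Riemann surface of some genus $h$, and the natural projection $\pi:\X_g\to Y$ of degree $N$. The first step is to set up the orbit/stabilizer bookkeeping: the branch points $y_1,\dots,y_k$ on $Y$ correspond to $G$-orbits of ramification points on $\X_g$, and over $y_j$ every point has the same ramification index $e_j=m_{ij}$ (the stabilizers in a single orbit are conjugate, hence of equal order), with fibre size $N/e_j$. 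Feeding this into Riemann--Hurwitz gives
\[ 2g-2 = N\left(2h-2\right) + \sum_{j=1}^{k} \frac{N}{e_j}\left(e_j-1\right) = N\left(2h-2 + \sum_{j=1}^k \left(1-\frac{1}{e_j}\right)\right). \]
So it suffices to show the bracketed quantity $R := 2h-2+\sum_{j=1}^k\left(1-1/e_j\right)$ is bounded below by $1/42$ whenever it is positive; since $g\geq 2$ forces the left side $2g-2>0$, we indeed have $R>0$, and then $N = (2g-2)/R \leq 42(2g-2) = 84(g-1)$.

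The heart of the argument is therefore the elementary optimization: among all choices of $h\geq 0$, $k\geq 0$, and integers $e_j\geq 2$ with $R>0$, the minimum positive value of $R$ is $1/42$. I would carry this out by cases on $h$. If $h\geq 2$, then $R\geq 2$. If $h=1$, then we need $k\geq 1$ and $R = \sum(1-1/e_j)\geq 1-1/2 = 1/2$. If $h=0$, we need $\sum_{j=1}^k(1-1/e_j) > 2$, which forces $k\geq 3$; here one checks small cases: $k\geq 5$ gives $R\geq 5\cdot\frac12 - 2 = \frac12$; $k=4$ gives $R\geq 4\cdot\frac12-2=0$ with equality only for all $e_j=2$, so the next smallest value, bumping one $e_j$ to $3$, gives $R = \frac13 - 0 = \frac13$ after recomputing $\tfrac12+\tfrac12+\tfrac12+\tfrac23-2=\tfrac16$; and $k=3$, where $R = 1 - \left(\frac{1}{e_1}+\frac{1}{e_2}+\frac{1}{e_3}\right)$ must be positive, reduces to making $\frac{1}{e_1}+\frac{1}{e_2}+\frac{1}{e_3}$ as close to $1$ from below as possible with $e_i\geq 2$ — the classical computation shows the closest such sum is $\frac{1}{2}+\frac{1}{3}+\frac{1}{7}=\frac{41}{42}$, giving $R=\frac{1}{42}$, and all other triples give strictly larger $R$.

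The main obstacle is the $h=0$, $k=3$ case: one must verify carefully that $\frac{1}{2}+\frac{1}{3}+\frac{1}{7}$ really is the maximal value of $\frac{1}{e_1}+\frac{1}{e_2}+\frac{1}{e_3}$ that stays strictly below $1$. I would do this by ordering $e_1\leq e_2\leq e_3$, noting $e_1\in\{2,3\}$ (if $e_1\geq 3$ the sum is at most $1$ only if all are $3$, giving sum exactly $1$, excluded; so near-maximum needs $e_1=2$ or the degenerate $e_1=e_2=e_3=3$ case which gives $R=0$), then with $e_1=2$ we need $\frac{1}{e_2}+\frac{1}{e_3}<\frac12$ maximal, forcing $e_2\in\{3,4,5,6\}$ with $e_2=3$ the relevant subcase, and then $\frac{1}{e_3}<\frac16$ maximal forces $e_3=7$. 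This is a finite, routine check once organized this way. The remaining ingredients — the constancy of ramification indices within an orbit and the translation of the sum $\sum_{p\in X}(\mult_p F - 1)$ into $\sum_j \frac{N}{e_j}(e_j-1)$ — follow directly from the orbit--stabilizer theorem and the degree formula $\deg F = \sum_{p\in F^{-1}(y)}\mult_p F$ already recorded above.
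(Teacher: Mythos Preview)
Your proof is correct and follows essentially the same approach as the paper: form the quotient $\X_g \to \X_g/G$, apply Riemann--Hurwitz to express $2g-2 = N\cdot R$ with $R = 2h-2+\sum(1-1/e_j)$, and then carry out the same case analysis on $(h,k,e_j)$ to show that the minimum positive value of $R$ is $1/42$, attained at $(h;e_1,e_2,e_3)=(0;2,3,7)$. The only blemish is the garbled arithmetic in your $k=4$ case (you write ``$R=\tfrac13-0=\tfrac13$'' before correcting to the right value $\tfrac16$), and your $e_1\geq 3$ subcase is handled more summarily than in the paper, but neither affects the validity of the argument.
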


\proof 
Let $k(\X_g)$ denote the function field of $\X_g$.  From the above proposition, we know $\Aut(\X_g)$ is finite.  Let $n=|\Aut(\X_g)|$.  We wish to show $n\leq 84(g-1)$.

Let $L$ denote the fixed field of $k(\X_g)$ under the action of $\Aut(\X_g)$.  Then $L\subseteq k(\X_g)$ is a function field extension which corresponds to a morphism of curves $f:\X_g\to Y$.  Since $\Aut(\X_g)$ is finite, $f$ is a degree $n$ morphism.

Suppose $P\in\X_g$ is a ramification point of $f$ with ramification index $e_P=r$.  Let $f(P)=Q\in Y$.  Then $f^{-1}(Q)$ contains $n/r$ points, each with ramification index $r$.  %need to expand on this claim - need to expand on the ramification index earlier in this paper.

Let $Q_1,\dots,Q_s\in Y$ be the images of the ramified points of $X$.  For each $Q_i$, let $f^{-1}(Q_i)=\{P_{i,1},\dots,P_{i,k_i}\}.$  These points all have the same ramification index $r_i=e_{P_{i,j}}=n/k_i$, for all $j$.  By Riemann-Hurwitz, \[ 2g-2 = (2g(Y)-2)n+\sum_{i=1}^s\sum_{j=1}^{k_i}(n/k_i-1).\]  The right-hand side simplifies to give \[2g-2=(2g(Y)-2)n+n\sum_{i=1}^s(1-k_i/n),\] so \[(2g-2)/n=2g(Y)-2+\sum_{i=1}^s(1-1/r_i).\]

Since $g\geq2$, the left-hand side of this equation is positive, so the right-hand side must be positive as well.  We denote the right-hand side by $R$; that is, let \[R=2g(Y)-2+\sum_{i=1}^s(1-1/r_i)>0.\]  A minimal value of $R$ corresponds to a maximal value of $n$.  Thus, we aim to determine values of $g(Y)\geq0, s\geq0, r_1,\dots,r_s\geq2$ to minimize $R$.  To simplify things, we assume $r_1\leq r_2\leq\dots\leq r_s$.

If $s=0$, then $R=2g(Y)-2$, so $R\geq2$.

Now, suppose $s\geq1$.  If $g(Y)\geq1$, then $R\geq 2-2+(1-1/r_1)+\sum_{i=2}^s(1-r_i).$  Since the summation is strictly positive, this quantity is minimized when $s=1$ and $r_1=2$.  Hence, $R\geq1/2$.  Thus, $s\geq1$.

Now, if $g(Y)=0$, then $R=(s-2)-\sum_{i=1}^s 1/r_i.$  To get $R>0$, we need $s>2$.  Suppose $s=3$, and let $h(r_1,r_2,r_3)=1-1/r_1-1/r_2-1/r_3$.  Then $R=h(r_1,r_2,r_3)$.  If $r_1\geq4$, then the minimum value of $h(r_1,r_2,r_3)$, which occurs when $r_1=r_2=r_3=4$, is $1/4$.  If $r_1=3$, then the minimum (positive) value of $h(3,r_2,r_3)$, which occurs when $r_2=3$ and $r_3=4$, is $1/12$.

Now, suppose $r_1=2$. Then $h(2,r_2,r_3)=1/2-1/r_2-1/r_3$, so $r_2>2$.  Suppose $r_2=3$. Then $h(2,3,r_3)=1/6-1/r_3$, so $r_3=7$ gives a minimum value of $h(2,3,7)=1/42$.  If $r_2\geq4$, then $r_3\geq5$, and $h(2,r_2,r_3)\geq1/20$.  Thus, if $s=3$, then the minimum value of $R$ is $1/42$.

Now, we consider $s\geq4$ and $g(Y)=0$.  If $s=4$, then $R=2-1/r_1-1/r_2-1/r_3-1/r_4$, which, when $r_1=r_2=r_3=2, r_4=3$, has a minimum value of $1/6$.  If $s\geq5$, then $r_1=r_2=\dots=r_s=2$ gives $R\geq s/2-2\geq1/2$.

Having considered all possible combinations, we find the minimum value of $R$, which is $1/42$, occurs when $g(Y)=0$ and $(r_1,r_2,r_3)=(2,3,7)$.  Thus, \[(2g-2)/n\geq 1/42,\] so $n\leq84(g-1)$, as desired.

\qed

%\subsection{Fixed points of automorphisms}

The following two results consider the number of fixed points of an automorphism $\s \in \Aut (\X_g)$. %; see \cite{fk} for details. 

\begin{lem}
Let $\s\in\Aut(\X_g)$ be a non-trivial automorphism.  Then
\[ | \fix (\s)  \,  |  \, \leq 2 \, \frac {|\s| + g -1 } {|\s| -1}.\]  If $\X_g/\s\cong\P^1$ and $|\s|$ is prime, then this is an equality.
\end{lem}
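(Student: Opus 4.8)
The plan is to apply the Riemann–Hurwitz formula to the quotient map $\pi\colon\X_g\to Y:=\X_g/\langle\s\rangle$, exactly as in the proof of Hurwitz's theorem above, and then extract an inequality by bounding $g(Y)\geq0$ from below and controlling the ramification. Write $n=|\s|$, and let $g_Y=g(Y)$. The map $\pi$ has degree $n$, and since $\langle\s\rangle$ is cyclic every ramification point of $\pi$ lies over a branch point; moreover a point $P\in\X_g$ is fixed by \emph{some} nontrivial power of $\s$ precisely when it is a ramification point. The crucial observation is that the fixed points of $\s$ itself are among the totally ramified points: if $\s(P)=P$ then the stabilizer of $P$ in $\langle\s\rangle$ is a nontrivial subgroup, and when $n$ is prime the stabilizer must be all of $\langle\s\rangle$, so $P$ is totally ramified with $e_P=n$ and the fibre $\pi^{-1}(\pi(P))$ consists of that single point. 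So in the prime case, if $\s$ has $N$ fixed points, these contribute $N$ branch points each with a single preimage of ramification index $n$, hence each contributing $(n-1)$ to the ramification sum; there may be further branch points but they only add more.

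First I would write Riemann–Hurwitz:
\[
2g-2 = n(2g_Y-2) + \sum_{P\in\X_g}(e_P-1).
\]
In the prime case, the ramification sum is at least $N(n-1)$ (taking only the contribution from the $N$ fixed points of $\s$), so
\[
2g-2 \geq n(2g_Y-2) + N(n-1) \geq -2n + N(n-1),
\]
using $g_Y\geq0$. Solving for $N$ gives $N(n-1)\leq 2g-2+2n = 2(g-1)+2n$, i.e.
\[
N \leq \frac{2(n+g-1)}{n-1} = 2\,\frac{|\s|+g-1}{|\s|-1},
\]
which is the claimed bound. For the equality clause, when $\X_g/\s\cong\P^1$ we have $g_Y=0$, so $2g-2 = -2n + \sum(e_P-1)$; and if additionally $n$ is prime, \emph{every} ramification point is totally ramified (its stabilizer is a nontrivial subgroup of the group of prime order $\langle\s\rangle$, hence the whole group), so every ramification point is a fixed point of $\s$ and the ramification sum is exactly $N(n-1)$. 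Then $2g-2 = -2n+N(n-1)$ gives $N = \dfrac{2(n+g-1)}{n-1}$ exactly.

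For the general (non-prime) case I would need to be slightly more careful: a fixed point $P$ of $\s$ has stabilizer a nontrivial subgroup $H\leq\langle\s\rangle$, so $e_P=|H|\geq 2$; in any case $e_P-1\geq 1$ for each of the $N$ fixed points, so $\sum_P(e_P-1)\geq N$, which only yields the weaker $2g-2\geq -2n+N$. To recover the stated bound one instead observes that the $N$ fixed points of $\s$ lie over at most... hmm — actually the cleanest route is: the image points $\pi(P)$ of the $N$ fixed points are distinct branch points (two fixed points $P\neq P'$ of $\s$ cannot satisfy $\pi(P)=\pi(P')$, since then $\s$ would act transitively on a fibre of size $\geq2$ while fixing both — impossible). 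Over each such $Q=\pi(P)$ the fibre has $n/e_P$ points and contributes $(n/e_P)(e_P-1) = n - n/e_P \geq n/2$ to the ramification sum (since $e_P\geq2$). Hence $\sum_P(e_P-1)\geq N\cdot n/2$, giving $2g-2\geq -2n + Nn/2$, i.e. $N\leq \dfrac{4(g-1+n)}{n} = \dfrac{4(g+n-1)}{n}$, which is \emph{weaker} than the claimed $\dfrac{2(g+n-1)}{n-1}$ for small $n$. So the main obstacle is that the clean bound $2\dfrac{|\s|+g-1}{|\s|-1}$ as stated really seems to require either $n$ prime or a sharper accounting; I would double-check whether the intended reading is that the inequality holds with the understanding that each fixed point contributes a full factor $(n-1)$ — which is exactly right when the stabilizer of every fixed point is all of $\langle\s\rangle$, e.g. when $n$ is prime — and present the general bound via $\sum_P(e_P-1) \geq N(n-1)$ restricted to those fixed points whose stabilizer is the full group, noting that by passing to $\s$ of prime order one always reduces to that case. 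The equality statement, which is what matters for the applications, follows cleanly from $g_Y=0$ and $n$ prime as above.
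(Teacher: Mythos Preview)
Your approach---apply Riemann--Hurwitz to the quotient map $\pi:\X_g\to\X_g/\langle\sigma\rangle$ and bound $g_Y\geq 0$---is exactly the paper's. Your treatment of the equality clause (when $g_Y=0$ and $n$ is prime) is also correct and matches the paper.

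The gap is in your handling of the inequality for non-prime $n$. You write that a fixed point $P$ of $\sigma$ ``has stabilizer a nontrivial subgroup $H\leq\langle\sigma\rangle$, so $e_P=|H|\geq 2$,'' and then struggle to extract the bound. But this understates what you know: if $\sigma(P)=P$ then $\sigma$ itself lies in the stabilizer $H$, and since $\sigma$ generates the whole cyclic group, $H=\langle\sigma\rangle$ and $e_P=n$. Equivalently, the orbit of a fixed point of $\sigma$ is a singleton, so the fibre over its image is a single point with multiplicity $n$. No primality is needed for this; primality only ensures the converse (that \emph{every} ramification point is a fixed point of $\sigma$), which is what drives the equality clause.

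With this correction your own argument for the prime case works verbatim in general: the $N$ fixed points of $\sigma$ each contribute exactly $n-1$ to the ramification sum, so
\[
2g-2 \;=\; n(2g_Y-2)+\sum_{P}(e_P-1)\;\geq\; -2n + N(n-1),
\]
giving $N\leq \dfrac{2(n+g-1)}{n-1}$ as claimed. The detour through $n-n/e_P\geq n/2$ and the weaker bound $4(g+n-1)/n$ is unnecessary, and your closing suggestion to ``pass to $\sigma$ of prime order'' would change both $n$ and the fixed-point set, so it does not recover the stated inequality.
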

\proof
Let $n=|\s|$, and for any $P\in\X_g$, let $O_P$ denote the orbit of $P$ under $\s$.  Then $|O_P|$ divides $n$.  Consider the degree $n$ cover $F:\X_g\to\X_g/\langle\s\rangle$ and let $Q\in\fix(\s)$.  Then $Q$ is ramified with $\mult_Q F=n$.  If $n$ is prime, then the fixed points are exactly the ramified points.  To see this, note that a non-fixed point $P$ has $|O_P|=n$ (because $n$ is prime) and so is unramified.

Now we apply Riemann-Hurwitz to this cover.  Let $g'$ denote the genus of $\X_g/\langle\s\rangle$.  Then $2g-2=n(2g'-2)+\sum_{P\in\X_g}(\mult_P F-1),$  so \[\sum_{Q\in\fix(\s)} (n-1) = 2g-2 -n(2g'-2)-\sum_{P\in\X_g\setminus\fix(\s)}(\mult_P F-1).\]  Thus $|(\fix(\s))|\leq \dfrac{2g-2+2n}{n-1}$, with equality when $g'=0$ and when the fixed points are exactly the ramified points.  That is, equality holds when $\X_g/\s\cong\P^1$ and $\s$ has prime order.
\qed
% \proof   % Prop on page 261 Farkas & Kra   \qed

\begin{cor} If $\X_g$ is not hyperelliptic, then for any non-trivial $\s \in \Aut (\X_g)$ the number of fixed points of $\s$ is $| \fix (\s) | \leq 2g-1$.
\end{cor}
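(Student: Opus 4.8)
The plan is to split on the order $n=|\s|$ of the automorphism, using the preceding lemma when $n\geq 3$ and handling the involution case separately. A preliminary observation drives the numerology: since every genus-$2$ curve is hyperelliptic (its canonical map is a degree-$2$ map to $\P^1$), the hypothesis that $\X_g$ is not hyperelliptic forces $g\geq 3$.

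For $n=|\s|\geq 3$, I would simply feed the bound from the previous lemma and simplify: $|\fix(\s)|\leq 2(n+g-1)/(n-1)=2+2g/(n-1)$, and since $n-1\geq 2$ this is at most $g+2$. As $g\geq 3$ we have $g+2\leq 2g-1$, so the claimed bound holds for every $\s$ of order at least $3$ with essentially a one-line estimate.

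The only case needing genuine work — and the main obstacle — is $n=2$, because there the lemma yields only $|\fix(\s)|\leq 2g+2$, which is too weak. Here the plan is to pass to the degree-$2$ quotient $F\colon\X_g\to Y:=\X_g/\langle\s\rangle$ and let $g'$ be the genus of $Y$. If $g'=0$, then $Y\cong\P^1$ and $F$ realizes $\X_g$ as a double cover of $\P^1$, i.e.\ $\X_g$ would be hyperelliptic, contradicting the hypothesis; hence $g'\geq 1$. Since $\s$ has prime order, its fixed points are exactly the ramification points of $F$, each contributing $\mult_P F-1=1$, so Riemann-Hurwitz gives $2g-2=4g'-4+|\fix(\s)|$, whence $|\fix(\s)|=2g+2-4g'\leq 2g-2\leq 2g-1$. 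Combining this with the case $n\geq 3$ finishes the argument; the conceptual point is that the lemma cannot by itself control involutions, and non-hyperellipticity enters exactly by forbidding the quotient of an involution from being $\P^1$.
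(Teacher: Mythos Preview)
Your proof is correct and follows essentially the same route as the paper: split on $n=2$ versus $n\geq 3$, use $g\geq 3$, handle $n\geq 3$ via the lemma's bound $2+2g/(n-1)\leq g+2\leq 2g-1$, and handle the involution case by observing that $g'\geq 1$ (else $\X_g$ would be hyperelliptic) so Riemann--Hurwitz gives $|\fix(\s)|=2g+2-4g'\leq 2g-2$. If anything, your argument is slightly cleaner: the paper asserts $g'\geq 1$ for all $\s$, which is not generally true for $n\geq 3$ and is in any case unnecessary there, whereas you correctly invoke non-hyperellipticity only where it is actually needed, namely for $n=2$.
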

\proof
If $\X_g$ is not hyperelliptic then $g\geq3$.  In the notation of the above proof, we have $g'\geq1$.  If $n=2$, then $|(\fix(\s))| = 2g+2-4g'$, so $|(\fix(\s))|\leq 2g-2$.  If $n\geq3$, then $|(\fix(\s))|\leq 2+\dfrac{2g-2g'n}{n-1}\leq 2+g$.

We can then combine these into one bound.  Note that $2g-2\leq 2g-1$ for all $g$, and $2+g\leq 2g-1$ for all $g\geq3$. Thus, if $\X_g$ is not hyperelliptic, then $|\fix(\s)|\leq 2g-1$.
\qed

\bigskip

\noindent \textbf{Part 3 : Weierstrass points on certain curves}
%******************************************

\section{Hyperelliptic and superelliptic curves} 
In this section, we give a brief background of hyperelliptic and superelliptic curves with some results related to Weierstrass points.  In particular, we describe how to calculate the $q$-Weierstrass weight of any branch point on a hyperelliptic or superelliptic curve.  Proofs can be found in \cites{towse-3, koo}.

\begin{defn}
A curve $X_g$, for $g\geq2$, is said to be {\it superelliptic} if there is a finite morphism $f:\X_g \to \P^1$ of degree $n$, for $n\geq2$.  A superelliptic curve is one which can be given in affine coordinates $x$ and $y$ by the equation $y^n=f(x)$, where $f(x)$ is a separable polynomial of degree $d>n$.

If $n=2$, then the curve is said to be hyperelliptic.
\end{defn}

Suppose $\X_g$ is given by $y^n=f(x)$ with $n\geq2$ and $f(x)\in\mathbb{C}[x]$ a separable polynomial of degree $d>n$.  Let $\{\alpha_1, \alpha_2, \dots, \alpha_d\}$ denote the $d$ distinct roots of $f(x)$, and for each $i$ let $B_i=(\alpha_i,0)$ be an affine branch point of the cover $\X_g\to\mathbb{P}^1$.  Let $c$ denote a complex number such that $f(c)\neq0$.  Let $P^c_1, \dots, P^c_n$ denote the $n$ points on $\X_g$ over $x=c$.

Let $G=\gcd(n,d)$.  All points on this model of the curve are smooth except possibly the point at infinity, which is singular when $d>n+1$.  In the smooth model of the curve, the point at infinity splits into $G$ points which we denote $P^\infty_1, \dots, P^\infty_G$.

One then has the following divisors:
\begin{itemize}
\item $(x-c) = \displaystyle\sum_{j=1}^nP^c_j - \dfrac{n}{G}\sum_{m=1}^GP^\infty_m,$
\item $(x-\alpha_i) = \displaystyle nB_i - \dfrac{n}{G}\sum_{m=1}^GP^\infty_m,$
\item $(y) = \displaystyle\sum_{j=1}^d B_j - \dfrac{d}{G}\sum_{m=1}^GP^\infty_m,$
\item $(dx) = (n-1)\displaystyle\sum_{j=1}^dB_j - \left(\dfrac{n}{G}+1\right)\sum_{m=1}^GP^\infty_m.$
\end{itemize}

Since $(dx)$ is a canonical divisor and hence has degree $2g-2$, we find the genus $g$ of $\X_g$ is given by \[2g-2=nd-n-d-\gcd(n,d).\]  In particular, if $n$ and $d$ are relatively prime, then $g=\dfrac{(n-1)(d-1)}{2}$.

Toward a basis for $H^0(C,(\Omega^1)^q)$, we first note that $\left(\displaystyle\frac{dx}{y^{n-1}}\right)=\dfrac{2g-2}{G}\displaystyle\sum_{m=1}^GP^\infty_m.$  Fix some $\alpha_i$ and $q\geq1$.  For any $a,b\in\Z$, let $h_{a,b,q}(x,y)=(x-\alpha_i)^ay^b\left(\frac{dx}{y^{n-1}}\right)^q.$  Then 
\[\left( h_{a,b,q}(x,y)\right) = anB_i+b\sum_{j=1}^dB_j+\frac{(2g-2)q-an-bd}{G}\sum_{m=1}^GP_m^\infty.\]  In particular, this divisor is effective precisely when $a\geq0$, $b\geq0$, and $an+bd\leq (2g-2)q.$  Since $y^n=f(x)$, the functions $h_{a,b,q}(x,y)$ are linearly independent if we let $a\geq0$ and restrict $b$ so that $0\leq b<n$.

Let \[S_{n,d,q}=\{(a,b)\in\Z^2 : a\geq0,\, 0\leq b<n,\, 0 \leq an+bd \leq (2g-2)q\}.\]  A counting argument gives the following lemma.

\begin{lem}
The set $S_{n,d,q}$ contains exactly $d_q$ elements.%, for $d_q=\begin{cases}g & \text{if } q=1, \\ (2q-1)(g-1) & \text{if } q>1.\end{cases}$
\end{lem}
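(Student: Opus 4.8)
The plan is to count lattice points in the region $S_{n,d,q}$ directly and show the total equals $d_q$, where $d_q = g$ if $q = 1$ and $d_q = (g-1)(2q-1)$ if $q \geq 2$. First I would set $N = (2g-2)q$ and observe that the constraints $a \geq 0$, $0 \leq b < n$, and $an + bd \leq N$ describe, for each fixed value of $b \in \{0, 1, \dots, n-1\}$, an interval of allowed values of $a$: namely $0 \leq a \leq \lfloor (N - bd)/n \rfloor$, provided $bd \leq N$ (which holds here since $b \leq n-1$ and $d \leq N/q \cdot$ something — one should check $bd \leq (n-1)d < nd - n - d \leq N$ when $q \geq 1$ and $g \geq 2$, using $2g-2 = nd - n - d - \gcd(n,d)$). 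So the count is
\[
\#S_{n,d,q} = \sum_{b=0}^{n-1}\left(\left\lfloor \frac{N - bd}{n}\right\rfloor + 1\right) = n + \sum_{b=0}^{n-1}\left\lfloor \frac{N - bd}{n}\right\rfloor.
\]

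Next I would evaluate the sum $\sum_{b=0}^{n-1} \lfloor (N-bd)/n \rfloor$. Writing each term as $(N - bd)/n$ minus its fractional part, the main term contributes $\frac{1}{n}\sum_{b=0}^{n-1}(N - bd) = N - \frac{d(n-1)}{2}$, and the correction is $-\sum_{b=0}^{n-1} \{(N-bd)/n\}$. The key number-theoretic input is that as $b$ ranges over $\{0, \dots, n-1\}$, the residues $N - bd \pmod n$ run over a coset structure determined by $G = \gcd(n,d)$: the values $bd \bmod n$ hit each multiple of $G$ exactly $G$ times, so $\sum_{b=0}^{n-1}\{(N-bd)/n\}$ equals $G \cdot \frac{1}{n}\bigl(r + (r+G) + \cdots\bigr)$ for an appropriate arithmetic progression of residues, which after simplification gives $\frac{n-G}{2} + (\text{a term depending on } N \bmod G)$. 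Since $N = (2g-2)q$ and $2g - 2 \equiv -G \equiv 0 \pmod G$, one gets $N \equiv 0 \pmod G$, so that extra term vanishes and the fractional-part sum is exactly $\frac{n-G}{2}$.

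Assembling the pieces: $\#S_{n,d,q} = n + N - \frac{d(n-1)}{2} - \frac{n-G}{2} = n + (2g-2)q - \frac{d(n-1)}{2} - \frac{n-G}{2}$. I would then substitute the genus formula $2g - 2 = nd - n - d - G$, i.e. $\frac{d(n-1)}{2} = \frac{nd - d}{2}$ and rearrange. For $q \geq 2$ the target is $(g-1)(2q-1) = (g-1)(2q) - (g-1) = (2g-2)q - (g-1)$, so I must check $n - \frac{d(n-1)}{2} - \frac{n-G}{2} = -(g-1) = -\frac{nd - n - d - G}{2}$; this is a direct algebraic identity. For $q = 1$ the same computation should yield $g$ after using $(2g-2) - (g-1) = g - 1$... — one must be careful here, since the formula for $d_q$ has a genuine jump at $q=1$ versus $q \geq 2$, and indeed the case $q=1$ should reproduce exactly $g = \dim H^0(\X_g, \Omega^1)$, so I would treat $q = 1$ as a separate (easy) sanity check of the same lattice count, noting the boundary inequality $an + bd \leq 2g-2$ is tight in a way that drops no points.

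The main obstacle I anticipate is the fractional-part / residue computation $\sum_{b=0}^{n-1}\{(N - bd)/n\} = \frac{n-G}{2}$: getting the coset combinatorics exactly right (each residue class mod $G$ achieved with the correct multiplicity, and confirming the progression of achieved residues $\{0, G, 2G, \dots, n-G\}$ sums as claimed) is the delicate step, and it is precisely where the hypothesis $G \mid (2g-2)$ — equivalently $N \equiv 0 \pmod G$ — is essential to kill the otherwise-present dependence on $N \bmod G$. Everything else is bookkeeping with the genus formula. An alternative, possibly cleaner, route would be to avoid floor functions entirely: partition $S_{n,d,q}$ according to $b \bmod G$ is not quite it, but one can use the bijection between $S_{n,d,q}$ and a basis of holomorphic $q$-differentials established implicitly in the paragraph preceding the lemma (the $h_{a,b,q}$ are linearly independent holomorphic $q$-differentials, and one argues they span), in which case the count $d_q$ follows from Riemann--Roch for free — but I suspect the intended proof is the direct lattice-point count, so that is what I would write out.
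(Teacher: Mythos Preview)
The paper itself gives no proof beyond the phrase ``a counting argument,'' so your direct lattice-point count is exactly the sort of argument intended, and your residue computation $\sum_{b=0}^{n-1}\{(N-bd)/n\}=(n-G)/2$ (using $G\mid N$) together with the genus formula does yield $(g-1)(2q-1)$ cleanly. There is, however, a genuine error in your range check.

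You claim $bd\le N$ for all $0\le b\le n-1$, attempting to verify $(n-1)d < nd-n-d$. But $(n-1)d = nd-d$, so this inequality reads $-d < -n-d$, i.e.\ $0<-n$, which is false. In fact for $q=1$ and $b=n-1$ one has $N-(n-1)d = (nd-n-d-G)-(nd-d) = -n-G < -n$, so the range of $a$ is empty while your formula $\lfloor(N-bd)/n\rfloor+1$ returns $-1$, not $0$. This is precisely why your computation produces $(g-1)(2q-1)$, which for $q=1$ equals $g-1$ rather than $d_1=g$: the single term $b=n-1$ is undercounted by exactly $1$.

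The fix is straightforward. What you actually need for the floor formula to give the correct count (zero) on an empty range is the weaker inequality $N-bd\ge -n$, i.e.\ $(n-1)d\le N+n$. For $q\ge2$ this reduces to $G\le 2g-2$, which holds for all $g\ge2$ (check the small cases $n=2,\,d=5,6$ and then use $2g-2=nd-n-d-G$ with $d>n$). So for $q\ge2$ your argument goes through as written. For $q=1$ you must treat $b=n-1$ separately: it contributes $0$ rather than $-1$, and adding this back recovers $g=d_1$. This is the concrete content of the ``jump'' you noticed, and it is not just a sanity check --- it requires the explicit identification of which $b$ fail and by how much.
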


From this set $S_{n,d,q}$, we create a basis $\mathfrak{B}_{q}=\{h_{a,b,q}(x,y) : (a,b)\in S_{n,d,q}\}$.  By the above lemma, since $\dim(H^0(C,(\Omega^1)^q))=d_q$, we have the following proposition.

\begin{prop}
For any root $\alpha_i$ and any $q\geq1$, the set $\mathfrak{B}_{q}$ forms a basis of $H^0(C,(\Omega^1)^q)$.
\end{prop}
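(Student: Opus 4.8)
The plan is to combine three facts established earlier: the functions $h_{a,b,q}(x,y)$ for $(a,b)\in S_{n,d,q}$ are holomorphic $q$-fold differentials, they are linearly independent, and there are exactly $d_q$ of them. First I would verify that each $h_{a,b,q}$ lies in $H^0(C,(\Omega^1)^q)$. This is immediate from the divisor computation
\[
\left( h_{a,b,q}(x,y)\right) = anB_i+b\sum_{j=1}^dB_j+\frac{(2g-2)q-an-bd}{G}\sum_{m=1}^GP_m^\infty,
\]
together with the defining inequalities of $S_{n,d,q}$: the conditions $a\geq0$, $b\geq 0$ make the coefficients of the $B_i$ and $B_j$ nonnegative, and the condition $an+bd\leq (2g-2)q$ makes the coefficient of the $P_m^\infty$ nonnegative. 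Hence $(h_{a,b,q})\geq0$, so $h_{a,b,q}$ is a global holomorphic $q$-differential.

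Next I would argue linear independence of the set $\mathfrak{B}_q$. The key point, already noted in the text, is that restricting $0\leq b<n$ together with $y^n=f(x)$ means that each $h_{a,b,q}$ is of the form $x^a y^b\,(dx/y^{n-1})^q$ with distinct monomials $x^a y^b$ in the quotient ring $\mathbb{C}[x,y]/(y^n-f(x))$, which is a free $\mathbb{C}[x]$-module with basis $1,y,\dots,y^{n-1}$. A nontrivial $\mathbb{C}$-linear relation among the $h_{a,b,q}$ would, after dividing by the common factor $(dx/y^{n-1})^q$, give a nontrivial relation among distinct elements $x^a y^b$ with $0\le b<n$, which is impossible. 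So $\mathfrak{B}_q$ is a linearly independent subset of $H^0(C,(\Omega^1)^q)$.

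Finally I would invoke the dimension count: by Riemann–Roch (recorded in the excerpt) $\dim H^0(C,(\Omega^1)^q)=d_q$, and by the preceding lemma $|S_{n,d,q}|=d_q$, so $\mathfrak{B}_q$ is a linearly independent set of exactly the right cardinality $d_q$ inside a $d_q$-dimensional space, hence a basis. I would remark that the argument is uniform in the choice of root $\alpha_i$, since the divisor identity holds for any $i$ and the independence argument never used which root was selected. The only step with any real content is the linear independence, and even that is essentially the structure of $\mathbb{C}(x,y)$ as a degree-$n$ extension of $\mathbb{C}(x)$; the rest is bookkeeping with the divisors already supplied. Thus the main obstacle, such as it is, is simply making the "distinct monomials" argument precise — ensuring that the normalization $0\le b<n$ genuinely separates the basis elements and that no collapse occurs when passing from functions to differentials — but this is routine given the explicit model.
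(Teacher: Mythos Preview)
Your proposal is correct and follows essentially the same approach as the paper: the paper also establishes holomorphicity from the divisor computation, notes linear independence from the relation $y^n=f(x)$ with $0\le b<n$, and then concludes by the dimension count $|S_{n,d,q}|=d_q=\dim H^0(C,(\Omega^1)^q)$. Your write-up simply makes the linear-independence step more explicit (one minor slip: the monomials are $(x-\alpha_i)^a y^b$, not $x^a y^b$, though this is immaterial to the argument).
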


One can then calculate the $q$-Weierstrass weight of any branch point $B_i=(\alpha_i,0)$ by calculating the orders of vanishing of the basis elements at $B_i$.  In particular, \[\ord_{B_i}\left(h_{a,b,q}(x,y)\right) = an+b.\]  Since $0\leq b<n$, these valuations are all distinct, and so \[w^{(q)}(B_i) = \sum_{(a,b)\in S_{n,d,q}} (an+b+1) - \sum_{m=1}^{d_q}m.\]

With this formula, we can show that any affine branch point is a $q$-Weierstrass point for all $q$.  First, we need a lemma.

\begin{lem}
For $\X_g$ a curve given by $y^n=f(x)$ with $f(x)$ separable of degree $d$, if $g>1$, then $g\geq n$ with equality only when $(n,d)=(2,5), (2,6),$ or $(3,4)$.
\end{lem}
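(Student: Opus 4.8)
The plan is to reduce everything to the genus formula $2g-2 = nd-n-d-\gcd(n,d)$ derived above, together with the standing hypotheses $d>n\ge 2$ coming from the definition of a superelliptic curve, and then run a short case analysis on $n$. First I would rephrase the target inequality $g\ge n$ as $2g-2\ge 2n-2$, that is,
\[ d(n-1) - \gcd(n,d) \ge 3n - 2. \]

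For $n\ge 4$ I would dispatch this with the two crudest available bounds, $d\ge n+1$ and $\gcd(n,d)\le n$: the left-hand side is then at least $(n+1)(n-1)-n = n^2-n-1$, and $n^2-n-1 \ge 3n-2$ is equivalent to $n^2-4n+1\ge 0$, which holds for every integer $n\ge 4$. In fact the slack $n^2-4n+1$ is positive (and increasing), so one even gets the strict inequality $g>n$ there, meaning no equality case occurs when $n\ge 4$.

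It then remains to handle $n=2$ and $n=3$ directly, splitting on $\gcd(n,d)$. For $n=2$: if $d$ is odd then $g=(d-1)/2$, and if $d$ is even then $g=(d-2)/2$; in either case the inequality $g\ge 2$ is just the hypothesis $g>1$, and $g=2$ forces $d=5$ (odd case) or $d=6$ (even case). For $n=3$: if $3\nmid d$ then $g=d-1$, so $d\ge 4$ gives $g\ge 3$ with equality exactly at $d=4$; if $3\mid d$ then $d\ge 6$, so $g=d-2\ge 4>3$. Collecting the equality cases yields exactly $(n,d)\in\{(2,5),(2,6),(3,4)\}$, and substituting each back into the genus formula confirms $g=n$.

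I do not expect a genuine obstacle here: the argument is a finite case check. The only point requiring a little care is verifying that the weak estimates $d\ge n+1$ and $\gcd(n,d)\le n$ really are sufficient for all $n\ge 4$ — they are, with room to spare — so that the only cases left to enumerate by hand are $n\in\{2,3\}$, which are small enough to treat completely.
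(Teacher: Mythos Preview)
Your proof is correct and follows essentially the same approach as the paper: a direct case analysis on $n$, handling $n=2$ and $n=3$ by explicitly computing $g$ from the genus formula and splitting on $\gcd(n,d)$, and disposing of $n\ge 4$ via the crude bounds $d\ge n+1$ and $\gcd(n,d)\le n$. The only cosmetic difference is that for $n\ge 4$ the paper bounds $2g\ge (n-1)(d-2)\ge 3(n-1)$ using $d\ge 5$, whereas you rearrange to the quadratic $n^2-4n+1\ge 0$; both give the strict inequality $g>n$ immediately.
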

\proof
One can check that if $(n,d)=(2,5), (2,6),$ or $(3,4)$, then $g=n$.

If $n=2$ and $d\geq7$, then $g=(d-\gcd(d,2))/2\geq3>n$.

If $n=3$ and $d\geq5$, then $g=(2d-1-\gcd(d,3))/2\geq4>n$.

If $n\geq4$, then $d\geq5$, and so $2g=(n-1)(d-1)-\gcd(n,d)+1\geq (n-1)(d-2) \geq 3(n-1)$.  Thus, $g\geq \frac{3}{2}(n-1)$, which is larger than $n$ for $n>3$.
\qed

\begin{prop}
Any affine branch point $B_i$ is a $q$-Weierstrass point for all $q\geq1$.
\end{prop}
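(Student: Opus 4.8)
The plan is to show the $q$-Weierstrass weight $w^{(q)}(B_i)$ is strictly positive by exhibiting the $q$-gap sequence is not $\{1,2,\dots,d_q\}$. From the formula just established,
\[
w^{(q)}(B_i) = \sum_{(a,b)\in S_{n,d,q}}(an+b+1) - \sum_{m=1}^{d_q}m,
\]
and the valuations $an+b$ for $(a,b)\in S_{n,d,q}$ are all distinct (since $0\le b<n$). So the $q$-gap sequence is precisely $\{an+b+1 : (a,b)\in S_{n,d,q}\}$ sorted in increasing order. It suffices to prove that this $d_q$-element set is \emph{not} $\{1,2,\dots,d_q\}$, equivalently that its largest element exceeds $d_q$, equivalently that $w^{(q)}(B_i)>0$.

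First I would identify the largest element of the gap sequence. The pair $(a,b)\in S_{n,d,q}$ maximizing $an+b$ subject to $a\ge 0$, $0\le b<n$, $an+bd\le(2g-2)q$ will involve taking $b$ as large as the constraint allows; in any case the maximal valuation $N := \max\{an+b : (a,b)\in S_{n,d,q}\}$ satisfies $N \ge$ (the value at $a=\lfloor (2g-2)q/n\rfloor$, $b=0$), so $N \geq n\big(\lfloor (2g-2)q/n\rfloor\big)$, which for $q\ge 1$ is comparable to $(2g-2)q$. Meanwhile $d_q$ equals $g$ when $q=1$ and $(g-1)(2q-1)$ when $q>1$. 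The goal is then the elementary inequality $N+1 > d_q$. For $q=1$ this says the top gap exceeds $g$, i.e. $B_i$ carries a holomorphic differential vanishing to order $\ge g$; for $q>1$ one needs $N \ge (g-1)(2q-1)$, and since $N$ is on the order of $(2g-2)q = 2(g-1)q$ while $d_q = (g-1)(2q-1) = 2(g-1)q - (g-1)$, there is room to spare. This is exactly where the preceding lemma ($g\ge n$, hence $(2g-2)q \ge$ a suitable multiple of $n$) gets used: it guarantees the floor $\lfloor (2g-2)q/n\rfloor$ is large enough that $n\lfloor (2g-2)q/n\rfloor \ge (g-1)(2q-1)-1+1$, closing the gap. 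I would treat the small exceptional cases $(n,d)=(2,5),(2,6),(3,4)$ (where $g=n$) by direct inspection, and the generic case $g>n$ by the inequality.

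The main obstacle I anticipate is the bookkeeping around the floor function: bounding $N$ from below by something manifestly larger than $d_q$ requires controlling $n\lfloor (2g-2)q/n\rfloor$ versus $(2g-2)q$, i.e. controlling the remainder of $(2g-2)q$ modulo $n$, and this remainder can be as large as $n-1$. So the clean chain I want is $N \ge (2g-2)q - (n-1)$ (take $b=0$ and $a = \lfloor (2g-2)q/n \rfloor$, losing at most $n-1$), and then I must verify $(2g-2)q - (n-1) + 1 > d_q$, i.e. $(2g-2)q - n + 2 > d_q$. For $q\ge 2$ this reads $2(g-1)q - n + 2 > (g-1)(2q-1) = 2(g-1)q-(g-1)$, i.e. $g-1 > n-2$, i.e. $g > n-1$, i.e. $g\ge n$ — which holds by the lemma. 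For $q=1$ it reads $2g-2-n+2 > g$, i.e. $g > n$, which holds by the lemma except in the three exceptional cases, handled separately (and there one checks $N$ directly: e.g. for $(n,d)=(2,6)$, $g=2$, the valuations $an+b$ with $2a\le 2$, $0\le b<2$ are $0,1,2,3$, giving gap sequence $\{1,2,3,4\}\ne\{1,2\}$). Assembling these cases gives $w^{(q)}(B_i)>0$ for all $q\ge 1$, so every affine branch point is a $q$-Weierstrass point.
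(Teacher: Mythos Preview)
Your argument for $q\ge 2$ is exactly the paper's: take $(A,0)$ with $A=\lfloor(2g-2)q/n\rfloor$, bound $\ord_{B_i}(h_{A,0,q})\ge(2g-2)q-(n-1)+1$, compare with $d_q=(g-1)(2q-1)$, and reduce to $g\ge n$ from the preceding lemma. For $q=1$ the paper takes a different and cleaner route: the function $1/(x-\alpha_i)$ has a pole of order exactly $n$ at $B_i$ and nowhere else, so $\ell(nB_i)>1$; since $g\ge n$ by the lemma, this gives $\ell(gB_i)\ge\ell(nB_i)>1$ directly, and $B_i$ is a $1$-Weierstrass point with no need to treat the equality cases $(n,d)\in\{(2,5),(2,6),(3,4)\}$ separately.

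Your case-by-case plan for $q=1$ would also work, but your sample computation for $(n,d)=(2,6)$ contains an error: you enumerated $(a,b)\in\{0,1\}\times\{0,1\}$ and listed four valuations, but you dropped the constraint $an+bd\le(2g-2)q$, which here reads $2a+6b\le 2$ and forces $b=0$. The correct set is $S_{2,6,1}=\{(0,0),(1,0)\}$, of the right cardinality $d_1=g=2$, with valuations $\{0,2\}$ and gap sequence $\{1,3\}\ne\{1,2\}$; so the conclusion still holds, but the check as written needs repair.
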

\proof
We first consider the case where $q=1$.  The function $1/(x-\alpha_i)$ has a pole only at $B_i$ of order $n$, so $\ell(nB_i)>1$.  By the above lemma, $g\geq n$, so $\ell(gB_i)\geq \ell(nB_i) >1$, which implies $B_i$ is a $1$-Weierstrass point.

Now, suppose $q>1$.
Since there are $d_q$ distinct positive terms in both summations in \[w^{(q)}(B_i) = \sum_{(a,b)\in S_{n,d,q}} (an+b+1) - \sum_{m=1}^{d_q}m,\] the $q$-Weierstrass weight of $B_i$ will be positive precisely when there is some $(a,b)\in S_{n,d,q}$ such that $\ord_{B_i}(h_{a,b,q}(x,y))>d_q$.  Let $A= \left\lfloor \frac{(2g-2)q}{n} \right\rfloor$, where $\lfloor x\rfloor$ is the floor function.  Then \[An+0d=\left\lfloor\frac{(2g-2)q}{n}\right\rfloor n\leq(2g-2)q,\] which means $(A,0)\in S_{n,d,q}$.  The order of vanishing at $B_i$ is given by  \[ \ord_{B_i}(h_{A,0,q}(x,y)) = \left\lfloor \frac{(2g-2)q}{n} \right\rfloor n+1 \geq (2g-2)q-(n-1) +1,\] with the $(n-1)$ term representing the maximal fractional part of $(2g-2)q/n$ multiplied by $n$.

If $q>1$, then $\ord_{B_i}(h_{A,0,q}(x,y))-d_q \geq g-n+1$, which is at least 1 by the above lemma.  Thus, $\ord_{B_i}(h_{A,0,q}(x,y))>d_q$, so $B_i$ is a $q$-Weierstrass point.

\qed

%***************************************************
\section{The group action on the Weierstrass points of non-hyperelliptic curves of genus 3.}

Here we give the group action on the set of Weierstrass points on non-hyperelliptic curves of genus 3. The case of $g=2$ and $g=3$ hyperelliptic is trivial.  This summarizes the work done in \cite{babu}. 

A genus 3 curve $X_3$ is either hyperelliptic or it is a non-singular plane quartic. In the hyperelliptic case $X_3$ has exactly 8 Weierstrass points which are the ramification points of the canonical map $X_3 \rightarrow {\mathbb{P}}^1$. The action of $\Aut(X_3)$ on the Weierstrass points can be easily deduced from the previous section; see also   \cite[Tab.~3]{kyoto}.

Let $X_3$ be a non-singular plane quartic. Its Weierstrass points are the inflection points. The weight of a Weierstrass point is the same as the multiplicity of the inflection point which is either 1 or 2; see the end of Section~\ref{sec:linear-systems}.  The weighted number of inflection points is 24; see Theorem~\ref{thm:total-inf-weight}.

The inflection points are the intersection points of $X_3$ with its Hessian. The tangent to $X_3$ at the inflection point hits $X_3$ in $N$ further points, where $N=2$ (respectively $1$) if the weight of the inflection point is $1$ (respectively $2$). This way the inflection points and their weights can be computed effectively from the equation of the curve.

The groups $G$ occurring as $\Aut(X_3)$ are denoted in \cite{kyoto} by their group ID from the GAP library of small groups. Here we use the following shorter notation: $C_n$, $S_n$, $D_{2n}$, $V_4$ denotes the cyclic group of order $n$, the symmetric group on $n$ letters, the dihedral group of order $2n$, and the Klein-$4$ group respectively. Also, $L_{3}(2)$ ($=PGL_{3}(2)$) is the simple group of order 168. The groups of order $16$, $48$ and $96$ are just denoted by their group order. Their group IDs are $(16,13)$, $(48,33)$ and $(96,64)$ respectively. The group $(96,64)$ is sometimes denoted $C_4^2.S_3$.

We show how to derive the information on the Weierstrass points given in \cite[Tab.~1]{babu} and \cite[Tab.~2]{babu}.

When $G=L_{3}(2)$ or $C_{4}^{2}.S_{3}$, then $G$ has only one orbit of length $\leq 24$ on $X_3$. Therefore this orbit has to consist of all Weierstrass points. For $G=L_{3}(2)$ they form one orbit of length $24$ with a stabilizer of order $7$ and they all have weight $1$. For $G=C_{4}^{2}.S_{3}$ the Weierstrass points have weight $2$ and form one orbit of length $12$ with  stabilizer of order $8$.

From \cite[Tab.~3]{babu} we see that the $L_{3}(2)$-locus (a single point) is contained in each of the loci with groups $S_{4}$, $D_{8}$, $S_{3}$, $V_{4}$, $C_{2}$. Therefore the general $X_{3}$ in each of these loci has $24$ distinct Weierstrass points of weight one. No element of order $2$ or $3$ in $L_{3}(2)$ fixes a Weierstrass point of the corresponding $X_{3}$, therefore the same holds for the general curve in the loci listed above. Therefore for all $X_{3}$ in these loci, the Weierstrass points have weight $1$ and consist of regular orbits. This settles the
corresponding entries of \cite[Tab.~1]{babu}.

When $G=(48,33)$, we have three non-regular $G$-orbits of length $24$, $16$, and $4$. Let $S$ denote a Sylow $3$-subgroup of $G$ (of order $3$). Then $S$ stabilizes a point in the $4$-orbit as well as in the $16$-orbit (because these orbits have cyclic stabilizers of order $12$ and $3$ respectively). Let $N(S)$ denote the normalizer of $S$ in $G$. Then the number of fixed points of $S$ in the $16$-orbit is $\mid \frac{N(S)}{S} \mid $. Since $N(S)$ contains the cyclic group of order $12$ (point stabilizer of the $4$-orbit) it follows that $\mid \frac{N(S)}{S} \mid \geq4$. Thus $S$ has at least 4 fixed points in the $16$-orbit, and it fixes at least one point in the $4$-orbit. Thus $S$ has at least $5$ fixed points. By \cite[Theorem V.1.7]{fk} all fixed points of $S$ are Weierstrass points. It follows all points in the $16$-orbit and the $4$-orbit are Weierstrass points. Thus the points in the $16$-orbit have weight $1$ and the points in the $4$-orbit have weight $2$.

When $G=(16,13)$, there are four non-regular $G$-orbits of length  $8$, $8$, $8$, $4$. The $4$-orbit consists of the points $(0,0,1)$, $(1,0,1)$, $(1,0,0)$, $(t,0,1)$. We check that the Hessian of the equation $y^4=xz(x-z)(x-tz)$ is $0$ at $(0,0,1)$ so this point is a Weierstrass point. It follows all points in the $4$-orbit are Weierstrass points. Further, the $16$-locus contains the $48$-locus. Therefore by specialization, the general curve in the $16$-locus has one regular orbit of Weierstrass points with weight $1$. Thus the Weierstrass points of the $16$-locus consist of one regular orbit with points of weight $1$ and one $4$-orbit with points of weight $2$.

When $G=C_{3}$, $C_{6}$, $C_{9}$, the subgroup $C_{3}$ fixes five points: $(0,0,1)$, $(1,0,1)$, $(s,0,1)$, $(t,0,1)$, $(0,1,0)$. By \cite[Theorem V.1.7]{fk} they are all Weierstrass points. Using  
\cite[Remark 1]{babu} we compute the weight of these points, and get that the points $(0,0,1)$, $(1,0,1)$, $(s,0,1)$, $(t,0,1)$ have weight $1$, and $(0,1,0)$ has weight $2$.

Furthermore, we compute  the Hessian $h(x,y,z)$ of the $C_9$-equation $f(x,y,z)$. Since the Weierstrass points are the intersection of the Hessian with $X_3$, we set $z=1$ and consider the system of
equations $h(x,y,1)=0$ and $f(x,y,1)=0$. The resultant with respect to $x$ of these two polynomials is a polynomial in $y$ that has degree 18 and has nonzero discriminant. Thus the $X_3$ in the $C_9$-locus has $18$ distinct Weierstrass points in addition to the 5 fixed points of the $C_3$-subgroup. Thus the Weierstrass points consist of $2$ regular orbits with points of weight $1$, plus the above $5$ fixed points of $C_3$.

Now consider $X_3$ in the $C_3$-locus. By specialization to the $C_9$-locus we see that $X_3$ has $22$ Weierstrass points of weight $1$. This settles the $C_3$ case. 
The $48$-group occurs also in \cite[Tab.~1]{babu}, and so has already been dealt with.

Finally, we consider $X_3$ in the $C_6$-locus. The non-regular orbits consist of one fixed point and three additional orbits of length 3, 2, 2 with stabilizing subgroups of order 2, 3, 3 respectively. Now $C_6$ has only one subgroup of order $3$ so $C_3$ fixes all points in the 2-orbits. Thus we obtain the 5 fixed (Weierstrass) points of $C_3$ whose weights we computed above.

By specialization to the 48-locus we see that $X_3$ has at least 16 distinct Weierstrass points of weight 1. Thus in addition to the 5 fixed points of $C_3$, the Weierstrass points must consist of either 3 regular orbits with points of weight 1, or 2 regular orbits with points of weight 1 and the 3-orbit with points of weight 2. To determine which, we compute the fixed points on $X_3$ of the involution $N$ in $C_6$. They are the points $(1,y_i,2)$, $1 \leq i \leq 3$, where the $y_i$ are the roots of the equation $y^3=\frac{(1-2t)^2}{2}$. We find that for $t \neq \frac{1}{2}$, $\frac{1+ \zeta_{4}}{2}$, $\frac{1- \zeta_{4}}{2}$, the Hessian of $X_3$ is not 0 at these points. Thus for these values of $t$ the Weierstrass points are a union of 3 regular orbits with points of weight 1, and the five fixed points of $C_3$.

%************************************************************
% \noindent \textbf{Part  : Computing $q$-Weierstrass points}

%*********************************************************************************************************
\nocite{*}  %comment this if you want only references that are being used to show up

\bibliographystyle{amsplain} 

\bibliography{w-bibl}{}

%*********************************************************************************************************

\end{document}